\title[{K}lein-{G}ordon on asymptotically de {S}itter spaces]{A
  parametrix for the fundamental solution of the {K}lein-{G}ordon
  equation on asymptotically de {S}itter spaces}
\author{Dean Baskin}
\address{Department of Mathematics, Stanford University, Stanford CA
  94305} 
\date{May 31, 2010}
\definecolor{StanfordRed}{rgb}{0.6431,0,0.1137}
\newtheorem{thm}{Theorem}
\newtheorem{lem}[thm]{Lemma}
\newtheorem{prop}[thm]{Proposition}
\theoremstyle{definition}
\newtheorem{defn}[thm]{Definition}
\theoremstyle{remark}
\newtheorem{note}[thm]{Remark}
\newcommand{\azero}{{}^{0}\alpha}
\newcommand{\asympto}{\sim}
\newcommand{\charset}{\Sigma}
\newcommand{\complexes}{\mathbb{C}}
\newcommand{\diag}{\Delta}
\newcommand{\Diff}[1][]{\operatorname{Diff}^{#1}}
\newcommand{\extcup}{\bar{\cup}}
\newcommand{\E}{\mathcal{E}}
\newcommand{\F}{\mathcal{F}}
\newcommand{\G}{\mathcal{G}}
\newcommand{\hd}{\Omega^{\frac{1}{2}}}
\newcommand{\Hzero}{{}^{0}H}
\newcommand{\K}{\mathcal{K}}
\newcommand{\integers}{\mathbb{Z}}
\newcommand{\liftsto}{\leadsto}
\newcommand{\naturals}{\mathbb{N}_{0}}
\newcommand{\norm}[2][]{\left\| #2 \right\| _{#1}}
\newcommand{\lap}{\Delta}
\newcommand{\pd}[1][]{\partial_{#1}}
\newcommand{\reals}{\mathbb{R}}
\newcommand{\Tzero}{{}^{0}T}
\newcommand{\WF}{\operatorname{WF}}
\newcommand{\wdbl}{\tilde{\omega}}
\newcommand{\wzero}{{}^{0}\omega}
\newcommand{\CDf}[4][]{I^{#2}_{#1}(#3 ;#4)}
\newcommand{\LDf}[5][]{I^{#2}_{#1}(#3 ; #4)}
\newcommand{\LDflong}[5][]{I^{#2}_{#1}(#3 ; #4 ; #5)}
\newcommand{\phgalt}[1]{\mathcal{A}^{#1}_{\text{phg}}}
\newcommand{\phg}[3]{\mathcal{A}_{\text{phg}}^{#1}(#2 ; #3)}
\newcommand{\PLflong}[6][]{I_{#1}^{#2}(#3 ; #4 , #5; #6)}
\newcommand{\PLf}[6][]{I_{#1}^{#2}(#3 ; #4 , #5)}
\newcommand{\PLzero}[1][m]{\PLf[0]{#1}{\dblzero}{\Lambda_{0}}{\Lambda_{1}}{\hd (\dblzero)}}
\newcommand{\PLzerolong}[1][m]{\PLflong[0]{#1}{\dblzero}{\Lambda_{0}}{\Lambda_{1}}{\hd (\dblzero)}}
\newcommand{\PLlong}[1][m]{\PLflong{#1}{\dblzero}{\Lambda_{0}}{\Lambda_{1}}{\hd (\dblzero)}}
\newcommand{\st}{\tilde{s}}
\newcommand{\xt}{\tilde{x}}
\newcommand{\yt}{\tilde{y}}
\newcommand{\zt}{\tilde{z}}
\newcommand{\differential}[1]{\,d#1}
\newcommand{\dg}{\differential{\hat{g}}}
\newcommand{\dr}{\differential{r}}
\newcommand{\ds}{\differential{s}}
\newcommand{\dt}{\differential{t}}
\newcommand{\dT}{\differential{T}}
\newcommand{\dw}{\differential{w}}
\newcommand{\dx}{\differential{x}}
\newcommand{\dxt}{\differential{\xt}}
\newcommand{\dy}{\differential{y}}
\newcommand{\dyt}{\differential{\yt}}
\newcommand{\dz}{\differential{z}}
\newcommand{\deta}{\differential{\eta}}
\newcommand{\dmu}{\differential{\mu}}
\newcommand{\dtau}{\differential{\tau}}
\newcommand{\dtheta}{\differential{\theta}}
\newcommand{\dxi}{\differential{\xi}}
\newcommand{\domega}{\differential{\omega}}
\newcommand{\dblzero}{X^{2}_{0}}
\newcommand{\dblspacet}{X^{2}_{0,t}}
\newcommand{\dblspace}{\widetilde{X}^{2}_{0}}
\newcommand{\frontface}{\operatorname{ff}}
\newcommand{\ffp}{\frontface_{+}}
\newcommand{\ffm}{\frontface_{-}}
\newcommand{\leftface}{\operatorname{lf}}
\newcommand{\lfp}{\leftface_{+}}
\newcommand{\lfm}{\leftface_{-}}
\newcommand{\rightface}{\operatorname{rf}}
\newcommand{\rfp}{\rightface_{+}}
\newcommand{\rfm}{\rightface_{-}}
\newcommand{\newface}{\operatorname{lcf}}
\newcommand{\lcfp}{\newface_{+}}
\newcommand{\lcfm}{\newface_{-}}
\newcommand{\xf}{\operatorname{scf}}
\newcommand{\lightcone}{\operatorname{LC}} 
\begin{document}


\begin{abstract}
  In this paper we construct a parametrix for the forward fundamental
  solution of the wave and Klein-Gordon equations on asymptotically de
  Sitter spaces without caustics. We use this parametrix to obtain
  asymptotic expansions for solutions of $(\Box - \lambda) u = f$ and
  to obtain a uniform $L^{p}$ estimate for a family of bump functions
  traveling to infinity. 
\end{abstract}

\maketitle

\section{Introduction}
\label{sec:introduction}

De Sitter space is an exact solution of the vacuum Einstein
equations with positive cosmological constant. In this paper, we study
the forward fundamental solution of the wave and Klein-Gordon
equations on asymptotically de Sitter spaces. This is the unique
operator $E_{+}$ (which we identify with its Schwartz kernel) which
satisfies $(\Box - \lambda ) E_{+} = I$ and is supported in the
forward light cones, i.e., for a compactly supported smooth function
$f$, the function $u = E_{+}f$ satisfies 
\begin{align}
  \label{eq:inhomog-eqn}
  &(\Box - \lambda ) u = f \\
  &u \equiv 0 \text{ near past infinity}. \notag
\end{align}
Here $\lambda$ is the Klein-Gordon parameter. If
equation~\eqref{eq:inhomog-eqn} is considered as a
massive wave equation, the condition $\lambda \leq 0$ corresponds
to positive mass. We construct a parametrix for this problem and
establish asymptotic expansions for solutions. As an application of
our parametrix, we prove a uniform $L^{p}$ estimate for the operator
applied to a family of bump functions tending toward infinity. We
postpone to a future paper the consideration of Strichartz estimates
and semilinear wave equations on asymptotically de Sitter spaces.

The study of the decay properties of the wave equation on various
natural classes of spacetimes is an active area of research. For
example, Dafermos and Rodnianski \cite{dafermos-rodnianski:2007:long}
and Melrose, S{\'a} Barreto, and Vasy
\cite{melrose-sabarreto-vasy:2008} have obtained decay results for
solutions of the wave equation in the context of the de
Sitter-Schwarzschild model of a black hole spacetime.

Our definition of asymptotically de Sitter spaces is given in
\cite{vasy:2007} (called asymptotically de Sitter-like spaces in that
manuscript) and follows the definition of asymptotically hyperbolic
spaces given in \cite{mazzeo:1988} and \cite{mazzeo-melrose:1987}. An
asymptotically de Sitter space is a compact manifold with boundary
equipped with a Lorentzian metric having a prescribed asymptotic form
near the boundary. This pushes the boundary off ``to infinity''.

The microlocal structure of the fundamental solution for general real
principal type operators has been studied extensively. The solution
operator for the Cauchy problem for general real principal type
operators is a Fourier integral operator associated to a Lagrangian
submanifold of phase space given by the flowout of the Hamilton vector
field of the principal symbol of the operator. This was first
described in this language by Duistermaat and H{\"o}rmander in
\cite{duistermaat-hormander:1972}. In \cite{melrose-uhlmann:1979},
Melrose and Uhlmann constructed the forward fundamental solution for a
real principal type operator as a paired Lagrangian distribution.
These are distributions associated to two cleanly intersecting
Lagrangian submanifolds in phase
space. 

Guillemin and Uhlmann \cite{guillemin-uhlmann:1981}, Joshi
\cite{joshi:1998}, Melrose and Zworski \cite{melrose-zworski:1996},
Hassell and Vasy \cite{hassell-vasy:1999}, and others have all
generalized the notion of paired Lagrangian distributions. Guillemin
and Uhlmann defined a much more general class of paired Lagrangian
distributions, which Joshi restricted slightly in order to construct a
well-behaved calculus. Joshi then used this calculus to construct
complex powers of the wave operator on Riemannian manifolds. Melrose
and Zworski defined a class of distributions associated to an
intersecting pair of Legendrians, while Hassell and Vasy later
expanded this notion to describe the spectral projections on a
scattering manifold.

Polarski~\cite{polarski:1989} computed the propagator for the equation
of the massless conformally coupled scalar field in the static de
Sitter metric, which has been transformed to the Einstein open
universe. Yagdjian and Galstian~\cite{yagdjian-galstian:2008} computed
the fundamental solutions for the Klein-Gordon equation in de Sitter
spacetime transformed by the Lema\^{i}tre-Robertson change of
coordinates to the special case of the
Friedmann-Robertson-Walker-Lema\^{i}tre spacetime. They represented
the fundamental solutions and solutions of the Cauchy problem using
hypergeometric functions and proved $L^{p}L^{q}$ estimates.

Vasy~\cite{vasy:2007} generalized and extended Polarski's result to
asymptotically de Sitter spaces. He exhibited the well-posedness of
the Cauchy problem and showed that on such spaces, the solution $u$ of
$(\Box - \lambda) u = 0$ with smooth Cauchy data has an asymptotic
expansion at infinity. Indeed, if $x$ is a boundary defining function
for the conformal compactification of an asymptotically de Sitter
space $X$ and $\sqrt{\frac{(n-1)^{2}}{4} + \lambda}$ is not a
half-integer, then $u$ has an expansion 
\begin{equation*}
  u = u_{+}x^{\frac{n-1}{2}+\sqrt{\frac{(n-1)^{2}}{4}+\lambda}} + u_{-}x^{\frac{n-1}{2}-\sqrt{\frac{(n-1)^{2}}{4}+\lambda}},
\end{equation*}
where $u_{+}$ and $u_{-}$ are smooth on $X$. (The difference between this
expression and the corresponding one in \cite{vasy:2007} is due to
differing sign conventions for Lorentzian metrics.) In the case of an
integer coincidence, $u_{-}$ is instead in $C^{\infty}(X) +
x^{2\sqrt{\frac{(n-1)^{2}}{4}+\lambda}}(\log x)C^{\infty}(X)$. Vasy
also showed that solutions of the wave equation exhibit scattering,
i.e., that the data $u_{\pm}$ may be specified at one of $Y_{\pm}$
(future and past infinity, respectively), which fixes the data at
$Y_{\mp}$.

Our result extends the work of Vasy to the study of the fundamental
solution $(\Box - \lambda)E_{+} = I$ on asymptotically de Sitter
spaces.  We require three global assumptions in our study of
asymptotically de Sitter spaces: 
\begin{enumerate}
\item [(A1)] $Y = Y_{+}\cup Y_{-}$, with $Y_{+}$ and $Y_{-}$ a union
  of connected components of $Y$, 
\item [(A2)] each bicharacteristic $\gamma$ of $P$ converges to
  $Y_{+}$ as $t \to +\infty$ and to $Y_{-}$ as $t\to -\infty$, or vice
  versa, and 
\item [(A3)] The projection from $T^{*}X$ to $X$ of the flowout of the
  forward light cone from any point $p\in Y_{-}$ is an embedded
  submanifold of $X$ (except at the point $p$, where it always has a
  conic singularity). 
\end{enumerate}
The first two of these assumptions are not particularly restrictive.
They imply that the manifold is topologically a product $Y_{+}\times
\reals$, but are reasonable from a physical viewpoint in that they
imply a time orientation on the manifold. Colloquially, assumptions
(A1) and (A2) prevent the breakdown of causality on $X$.

Assumption (A3) is needed only to obtain sharp global statements about
the fundamental solution, but our construction works in some
neighborhood of $Y_{+}$ (i.e., a neighborhood of future infinity)
without this assumption.

Assumption (A3) prohibits the development of caustics, which
significantly narrows the class of manifolds considered. De Sitter
space just misses being covered by (A3), though a slight modification
of our construction still applies here. Indeed, the projection of the
flowout of the light cone from a point $p\in Y_{-}$ is a smooth
embedded submanifold of the interior of de Sitter space, but
intersects itself at $Y_{+}$. Section~\ref{sec:exact-de-sitter}
discusses the minor modifications needed to handle this case.

If we slightly enlarge the spherical cross section of de Sitter space,
then this new space satisfies the assumptions above. The assumptions
above are stable under perturbation, so the construction applies to
perturbations of this enlarged version of de Sitter space.

In order to remove the assumption (A3), we could combine the Poisson
operator construction of \cite{vasy:2007} with the local description
of the forward fundamental solution (given in
\cite{melrose-uhlmann:1979}). Phrasing this in a geometric way is left
to a future paper.

The main result of this paper is the following (we state it more
precisely and define the relevant classes of distributions later):
\begin{thm}
  \label{thm:main-thm}
  Suppose $(X,g)$ is an asymptotically de Sitter space.  There is a
  compactification $\dblspace$ of the interior of $X\times X$ to a
  compact manifold with corners such that the closures of the diagonal
  and the light cone both intersect all boundary hypersurfaces
  transversely. $\dblspace$ is constructed by first blowing up the
  boundary of the diagonal in $X\times X$ and then blowing up the set
  where the projection of the flowout of the light cone hits the side
  face. On this compactification, the forward fundamental solution of
  $\Box - \lambda$ lifts to be the sum of a paired Lagrangian
  distribution, smooth down to the front face, and a conormal
  distribution associated to the light cone with polyhomogeneous
  expansions at the other faces of $\dblspace$.
\end{thm}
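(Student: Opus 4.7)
The plan is to build the parametrix in two stages matching the two blowups in the statement: a Melrose-Uhlmann paired Lagrangian distribution near the diagonal, and then a conormal piece on the light cone carrying the large-scale asymptotics. First I would construct $\dblzero$ by blowing up the boundary of the diagonal $\partial \diag$ inside $X \times X$. On the resulting front face, after rescaling, the problem has a natural normal operator which is a model Klein-Gordon operator on a Lorentzian cone; for this the Melrose-Uhlmann calculus \cite{melrose-uhlmann:1979} directly furnishes a paired Lagrangian parametrix $E_1$ associated to the pair $(\Lambda_0,\Lambda_1)$ of the diagonal conormal and its flowout. The first error $R_1 = (\Box - \lambda) E_1 - I$ is then smooth down to the front face and conormal to $\Lambda_1$.

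Second, I would resolve the global geometry of the light cone. By assumption (A3) the spatial projection of the flowout from any boundary point is an embedded submanifold of the interior of $X$, so the lift of $\Lambda_1$ is smoothly embedded away from the side faces of $\dblzero$; the remaining obstruction is a non-transverse intersection with the side faces. Blowing up the locus where this projection meets each side face produces $\dblspace$ together with a new pair of light-cone faces. A direct computation in blowup coordinates then shows that $\diag$ and $\Lambda_1$ both lift to p-submanifolds of $\dblspace$ meeting every boundary hypersurface transversally, which establishes the first half of the theorem.

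Third, I would correct $E_1$ by a distribution $E_2$ conormal to the lifted light cone and smooth away from it. Because $R_1$ is conormal to $\Lambda_1$, solving $(\Box - \lambda) E_2 = -R_1$ modulo smooth errors reduces to a scalar transport equation along the null bicharacteristics generating $\Lambda_1$; integrating it along the flowout with vanishing data on the backward side produces $E_2$ and enforces the forward support condition on $E_{+} = E_1 + E_2$. The polyhomogeneous expansions at the remaining boundary hypersurfaces of $\dblspace$ --- the lifts of $Y_{\pm} \times X$ and $X \times Y_{\pm}$, together with the two new light-cone faces --- are then obtained by substituting a formal expansion into the transport equation and reading off the exponents from the indicial operator of $\Box - \lambda$ at each boundary component. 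These exponents coincide with the pair $\tfrac{n-1}{2} \pm \sqrt{\tfrac{(n-1)^{2}}{4} + \lambda}$ appearing in Vasy's expansion \cite{vasy:2007}, with logarithmic terms inserted in the integer-coincidence case.

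The main obstacle is the compatibility of the polyhomogeneous index sets across the corners of $\dblspace$, where several boundary hypersurfaces meet and the expansions on adjacent faces must match inductively. This forces a careful asymptotic matching between the outgoing data of the paired Lagrangian piece at the front face and the initial data for the transport equation on each light-cone face, and is the step where the bookkeeping is heaviest. A related technical point is verifying, purely in blowup coordinates, that the lift of $\Lambda_1$ after the second blowup is indeed a p-submanifold transverse to every face --- this is exactly where assumption (A3) enters, ruling out the self-intersections of the projected flowout that would otherwise destroy the transversality and force additional blowups.
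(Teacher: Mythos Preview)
Your two-stage architecture --- paired Lagrangian piece near the diagonal plus a conormal correction along the light cone --- matches the paper's, and your identification of (A3) as the hypothesis making the lifted light cone a p-submanifold is correct. But there are three genuine gaps.

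First, you identify $E_{+}$ with $E_{1}+E_{2}$. Solving the transport equations for the conormal symbol removes the singularity along $\lightcone$ but leaves a remainder that is \emph{smooth on the interior yet only polyhomogeneous at the new light-cone faces}, not vanishing there. The paper needs two further steps: an ODE analysis transverse to the light-cone face to push the error to the side faces $\lfp,\rfm$ (Section~\ref{sec:light-cone-face}), and then a normal-operator iteration at the front face using Vasy's solvability result to kill what remains there (Section~\ref{sec:front-face}). Only after all four steps do you have a parametrix with a negligible remainder.

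Second, you attribute the exponents $s_{\pm}(\lambda)=\tfrac{n-1}{2}\pm\sqrt{\tfrac{(n-1)^{2}}{4}+\lambda}$ to the indicial operator of the first-order transport equation. They do not arise there: the transport equation along bicharacteristics has a regular singular point whose indicial root depends on the symbol order and produces the index sets $\{(j+\tfrac{1}{2},l):l\le j\}$ at $\lcfp$ (Lemma~\ref{lem:TE-sols-are-phg}). The roots $s_{\pm}(\lambda)$ appear only in the next step, as indicial roots of the \emph{second-order} operator $Q_{j}$ (essentially the full $P(\lambda)$ restricted to the fiber over the side face) acting on the smooth remainder; see Lemma~\ref{lem:LCF-transport-eqn-solution-is-phg}. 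Conflating these two mechanisms would give the wrong index sets at both faces.

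Third, even after all of this you have only a parametrix $K$, not $E_{+}$ itself. The paper closes the gap by building $K$ simultaneously as a \emph{left and right} parametrix (checked via a symbol argument for $P_{L}-P_{R}^{t}$), so that $P K=I+R_{1}$ and $K P=I+R_{2}$ with $R_{1},R_{2}$ negligible; then the algebraic identity $E_{+}=K-KR_{1}+R_{2}E_{+}R_{1}$ shows that $E_{+}$ lies in the same class (Theorem~\ref{thm:fund-soln}). Without this step your argument describes a parametrix, not the forward fundamental solution the theorem is about.
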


We call a tempered distribution $f$ on $X$ \emph{forward-directed} if it
a smooth function on the interior of $X$ that vanishes to all orders
at $Y_{-}$. The work of Vasy \cite{vasy:2007} implies that if $f$ is
forward-directed, then we may apply $E_{+}$ to $f$.

A function $f$ is \emph{polyhomogeneous} with index set $E$ on $X$ if it has
an asymptotic expansion of the form 
\begin{equation*}
  \sum _{(r,l)\in E} x^{r}(\log x)^{l}a_{rl}(y)
\end{equation*}
near $Y_{\pm}$.  

Let $F_{1}$ and $F_{2}$ be the following two index sets:
\begin{align}
  \label{eq:index-sets}
  F_{1} &= \left\{ (j, l) : j,l\in \naturals, l\leq j\right\}, \notag \\
  F_{2} &= \{ (s_{\pm}(\lambda) + m , 0 ) : m\in \naturals  \} ,
\end{align}
where $s_{\pm}(\lambda) = \frac{n-1}{2} \pm \sqrt{\frac{(n-1)^{2}}{4}
  + \lambda}$.

A corollary of Theorem~\ref{thm:main-thm} describes the
polyhomogeneity of the solutions of $P(\lambda) u = f$. 
\begin{thm}
  \label{thm:mapping-phg}
  If $f$ is forward-directed and polyhomogeneous on $X$ with index set
  $E$ at $Y_{+}$, then $E_{+}f$ is forward-directed and
  polyhomogeneous with index set $F$, where
  \begin{equation*}
    F = F _{1}\,\extcup \,F _{2} \,\extcup \,E .
  \end{equation*}
\end{thm}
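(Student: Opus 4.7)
The plan is to realize $E_{+}f$ as a pushforward on the blown-up double space $\dblspace$ and then invoke Melrose's pushforward theorem for polyhomogeneous distributions on manifolds with corners. Write $E_{+}f = (\pi_{L})_{*}\bigl(K_{E_{+}}\cdot \pi_{R}^{*}f\bigr)$, where $\pi_{L}$ and $\pi_{R}$ are the lifts to $\dblspace$ of the left and right projections from $X\times X$ to $X$, and $K_{E_{+}}$ denotes the Schwartz kernel of $E_{+}$ lifted to $\dblspace$. A first geometric step is to verify that $\pi_{L}$ is a b-fibration from $\dblspace$ to $X$: this follows from the explicit construction of $\dblspace$ described in Theorem~\ref{thm:main-thm}, together with a check that each boundary hypersurface of $\dblspace$ maps either into the interior of $X$ or onto one of $Y_{\pm}$ with the correct exponent.

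Next, I would lift $f$ to the double space. Since $f$ is forward-directed and polyhomogeneous with index set $E$ at $Y_{+}$, the pullback $\pi_{R}^{*}f$ is polyhomogeneous on $\dblspace$ with index set $E$ at the right future face $\rfp$, vanishes to infinite order at the right past face $\rfm$, and has the trivial smooth index set at the remaining boundary hypersurfaces. Multiplying by $K_{E_{+}}$ then uses Theorem~\ref{thm:main-thm}, which splits $K_{E_{+}}$ into a paired Lagrangian piece smooth down to $\ffp$ and a conormal-plus-polyhomogeneous piece whose index sets at $\lfp$ contain the indicial data $s_{\pm}(\lambda)$. On $\dblspace$, the product has explicit index sets at each hypersurface: smooth (with a Heaviside-type cutoff from the light cone) at $\ffp$, the shifted set $F_{2}$ at $\lfp$, infinite order vanishing at the past faces by forward-directedness, and $E$ at $\rfp$.

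Finally, I would apply Melrose's pushforward theorem: if $u$ is a polyhomogeneous distribution on the total space of a b-fibration with index family $\{\mathcal{E}_{H}\}_{H}$, then the pushforward is polyhomogeneous with index sets at each boundary face computed as extended unions of the $\mathcal{E}_{H}$ over the hypersurfaces $H$ mapping into that face. Carrying this out for $\pi_{L}$ combines the $F_{2}$ coming from $\lfp$, the $E$ coming from $\rfp$, and the integer-plus-log contribution $F_{1}$ arising from the interaction of the diagonal/Lagrangian singularity with the front face $\ffp$ during integration along the fibers. Forward-directedness of $E_{+}f$ is inherited from the support condition on $K_{E_{+}}$.

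The main obstacle will be the careful index-set bookkeeping at the two new hypersurfaces introduced in the construction of $\dblspace$, in particular $\lcfp$ where the projected flowout of the light cone hits the side face; one must check that no additional terms appear beyond those assembled in $F_{1}\,\extcup\,F_{2}\,\extcup\,E$. A related subtlety is that the paired Lagrangian part of $K_{E_{+}}$ is not polyhomogeneous on $\dblspace$ in the classical sense, so the pushforward theorem must be applied after localizing away from the diagonal (where the interior smoothness of $E_{+}f$ in the bulk of $X$ is what one exploits) and separately analyzing the contribution from a neighborhood of $\ffp$, which is precisely what produces $F_{1}$ and matches the well-known integer-plus-log expansion of fundamental solutions of wave operators.
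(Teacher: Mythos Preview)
Your overall strategy---realize $E_{+}f$ as $(\beta_{L})_{*}(K\cdot\beta_{R}^{*}f)$ on $\dblspace$, verify that $\beta_{L}$ is a b-fibration, and invoke Melrose's pushforward theorem---is exactly the paper's. But the face-by-face bookkeeping is off in a way that would not assemble the correct index set.

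First, $\beta_{R}^{*}f$ does \emph{not} have the trivial smooth index set at $\ffp$: the defining function $\tilde{x}$ of $Y_{+}$ in the right factor pulls back to a defining function of both $\rfp$ and $\ffp$, so $\beta_{R}^{*}f$ carries the index set $E$ at $\ffp$ as well. This matters because under $\beta_{L}$ the face $\rfp$ maps to the \emph{interior} of $X$ (it lies in the null of the exponent matrix and contributes only an integrability condition, not an index set at $Y_{+}$), whereas $\ffp$, $\lfp$, and $\lcfp$ all map onto $Y_{+}$. The contribution $E$ in the final answer therefore comes from $\ffp$, not from $\rfp$ as you claim.

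Second, your attribution of $F_{1}$ to ``the interaction of the diagonal/Lagrangian singularity with the front face $\ffp$'' is misplaced. In the decomposition $K=K_{1}+K_{2}+K_{3}$, the polyhomogeneous piece $K_{3}$ already carries the integer-plus-log index set at $\lcfp$; after multiplying by $\beta_{R}^{*}f$ one has $G_{\ffp}=E$, $G_{\lfp}=F_{2}$, $G_{\lcfp}=F_{1}$, and the pushforward theorem gives $E\,\extcup\,F_{2}\,\extcup\,F_{1}$ directly from these three faces. The paired Lagrangian piece $K_{1}$ is handled by the local paired-Lagrangian calculus and contributes only $E$.

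Third, you underplay the conormal piece $K_{2}$: it is not polyhomogeneous on $\dblspace$, so Melrose's theorem does not apply to it as stated. The paper supplies separate lemmas showing that the fibers of $\beta_{L}$ are transverse to $\lightcone$, that b-vector fields on $X$ lift to b-vector fields on $\dblspace$ tangent to $\lightcone$, and hence that the pushforward of such a conormal distribution along $\beta_{L}$ is polyhomogeneous. This is where your ``main obstacle'' paragraph should be pointed, rather than at $\lcfp$-bookkeeping.
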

Here $E_{+}$ is regarded as an operator and $\extcup$ denotes the
extended union of two index sets:
\begin{equation*}
  (z,p) \in E \extcup F \Leftrightarrow (z,p)\in E\cup F \text{ or } p = p' + p'' +1 \text{ with } (z,p')\in E\text{ and }(z,p'')\in F .
\end{equation*}

As an application of our parametrix, we obtain a uniform $L^{p}$
estimate for a family of bump functions traveling to infinity. We
leave more general $L^{p}$ mapping properties and Strichartz estimates
to a future manuscript.

\begin{thm}
  \label{thm:Lp-estimates}
  Suppose $\phi \in C^{\infty}_{c}(\reals_{s}^{+}\times
  \reals_{z}^{n-1})$ is supported near $(1,0)$. For $(\xt, \yt) \in
  X$, Let $f_{(\xt, \yt)}(x,y) = \phi (\frac{x}{\xt},
  \frac{y-\yt}{\xt})$. Suppose that $p,l$, and $r$ satisfy
  \begin{align*}
    &2 < p < \infty, \\
    &r > \max \left( \frac{1}{2}, \Re \sqrt{\frac{(n-1)^{2}}{4} +
        \lambda}\right), \quad \text{and}\\
    &l \geq \max\left( 0,  - \frac{n-1}{2} + \Re \sqrt{\frac{(n-1)^{2}}{4} + \lambda}\right).
  \end{align*}
  Then   
  \begin{equation}
    \label{eq:Lp-est-main}
    \norm[x^{- l + 2(l-r)/p}L^{p}(X;\differential{g})]{E_{+}f_{(\xt, \yt)}}\leq C\xt ^{-2r/p}.
  \end{equation}
  Here the constant $C$ depends on the family $f_{(\xt,\yt)}$ but not
  on $(\xt,\yt)$.

  In particular, for the wave equation, $\lambda = 0$, and we may let
  $l=0$ and $r > \max \left( \frac{1}{2}, \frac{n-1}{2}\right)$.   In
  this case,
  \begin{equation*}
    \norm[x^{-2r/p}L^{p}(X;\differential{g})]{E_{+}f_{(\xt, \yt)}}\leq C\xt ^{-2r/p}.
  \end{equation*}
\end{thm}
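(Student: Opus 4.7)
My plan is to exploit the parametrix of Theorem~\ref{thm:main-thm} to split the kernel of $E_+$ into its diagonal-singular piece and its light-cone piece, and then to use the fact that in adapted coordinates near $Y_+$ the family $\{f_{(\tilde x,\tilde y)}\}$ consists of 0-translates of the single bump $f_{(1,0)}$. This should reduce the uniform estimate essentially to a bound on $E_+ f_{(1,0)}$ together with careful bookkeeping of the Jacobian under the 0-rescaling.

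First I would decompose the lift of $E_+$ to $\dblspace$ as $K_{\text{PL}} + K_{\text{LC}}$ as in Theorem~\ref{thm:main-thm}, where $K_{\text{PL}}$ is the paired Lagrangian piece, smooth down to the front face, and $K_{\text{LC}}$ is the light-cone conormal piece with polyhomogeneous expansions at the remaining faces of $\dblspace$. The support of $f_{(\tilde x, \tilde y)}$ is a 0-unit ball, so its pullback to $\dblspace$ lives in a fixed compact region on the front face above $(\tilde x, \tilde y)$. For the paired Lagrangian contribution, $K_{\text{PL}} f_{(\tilde x,\tilde y)}$ is supported in the forward 0-light cone from $\supp f_{(\tilde x,\tilde y)}$, which modulo boundary effects is a 0-translate of the cone from $\supp f_{(1,0)}$. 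Standard $L^p$ mapping for Melrose--Uhlmann paired Lagrangians then gives the desired bound: the restriction $p > 2$ reflects the conormal order $-\tfrac{1}{2}$ along the light cone, $r > \tfrac{1}{2}$ controls the conic singularity at the source, and the $\tilde x^{-2r/p}$ factor comes out of the Jacobian of the 0-rescaling.

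For the polyhomogeneous contribution, Theorem~\ref{thm:mapping-phg} gives that $K_{\text{LC}} f_{(\tilde x,\tilde y)}$ admits an expansion at $Y_+$ with leading terms $c_\pm(y;\tilde x,\tilde y)\, x^{s_\pm(\lambda)}$ (with a logarithm in the half-integer coincidence case). The 0-translation structure of the family forces $c_\pm(y;\tilde x,\tilde y) = \tilde x^{-s_\pm(\lambda)}\, \tilde c_\pm((y-\tilde y)/\tilde x)$ modulo contributions from $F_1$. Integrating $|x^{s_-(\lambda)}|^p\, x^{pl - 2(l-r)}\, x^{-n}\,\dx\,\dy$ is convergent at $x=0$ precisely under the hypotheses $l \geq \max\bigl(0, -\tfrac{n-1}{2} + \Re\sqrt{(n-1)^2/4+\lambda}\bigr)$ and $r > \Re\sqrt{(n-1)^2/4+\lambda}$, and combining with the 0-rescaling supplies the $\tilde x^{-2r/p}$ factor.

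The main obstacle I expect is the bookkeeping near the corner where the front face meets the side face: there the 0-dilation is only an approximate symmetry, so the $F_1$ contributions and their extended unions with $F_2$ must be shown to obey the same $\tilde x^{-2r/p}$ scaling. Uniformity of the constant in $(\tilde x, \tilde y)$ reduces to the delicate verification that only finitely many $C^k$ seminorms of $\phi$ enter, which should follow from the conormal regularity of $K_{\text{LC}}$ and smoothness of $K_{\text{PL}}$ down to the front face, but the log-coincidence cases require separate care. The wave-equation corollary ($\lambda = 0$, $s_-(\lambda) = 0$, $l = 0$, $r > (n-1)/2$) provides a clean consistency check, reflecting the sharp free-propagation decay rate at future infinity.
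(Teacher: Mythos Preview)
Your proposal has a genuine gap: you never bring in the $L^{2}$ estimate, and without it the argument does not close. The paper's proof is not a direct $L^{p}$ bound on each piece of the parametrix; it is an interpolation. One endpoint is an $x^{-l}L^{\infty}$ bound, obtained by writing $f_{(\tilde x,\tilde y)} = R\,\delta_{(\tilde x,\tilde y)}$ for a smoothing $0$-operator $R$ and reading off pointwise bounds on the Schwartz kernel of $E_{+}R$ from the index sets on $\dblspace$. The other endpoint is the weighted $L^{2}$ estimate $\norm[x^{-r}L^{2}]{E_{+}f_{(\tilde x,\tilde y)}} \le C\,\tilde x^{-r}$, which comes from Vasy's energy estimate (Theorem~\ref{thm:Vasy-thm-5.4}) together with the elementary scaling $\norm[x^{-r}L^{2}]{f_{(\tilde x,\tilde y)}} = C_r\,\tilde x^{-r}$. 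The hypotheses $r>\max(\tfrac12,l(\lambda))$ and $2<p<\infty$ are exactly the hypotheses of that $L^{2}$ theorem and of interpolation against $L^{\infty}$; they are not, as you suggest, consequences of the conormal order along the light cone or of a conic singularity at the source.

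By contrast, your route tries to get $L^{p}$ directly from kernel structure plus a ``$0$-translation'' symmetry. Two of the steps you rely on are not available. First, ``standard $L^{p}$ mapping for Melrose--Uhlmann paired Lagrangians'' is not a black box you can invoke here: the relevant distribution is order $-3/2$ with a genuine conormal singularity along the light cone, and turning that into sharp $L^{p}$ bounds would itself require oscillatory-integral estimates you have not supplied. Second, and more seriously, the claimed scaling $c_{\pm}(y;\tilde x,\tilde y) = \tilde x^{-s_{\pm}(\lambda)}\,\tilde c_{\pm}((y-\tilde y)/\tilde x)$ of the leading asymptotic data is only correct for the exact model; on a general asymptotically de~Sitter space the $0$-dilations are not isometries, and you yourself flag the front-face/side-face corner as ``the main obstacle'' without resolving it. The paper sidesteps both issues: the $L^{\infty}$ endpoint needs only boundedness of the kernel of $E_{+}R$ (no oscillatory estimates), and the $L^{2}$ endpoint imports uniformity in $(\tilde x,\tilde y)$ from an energy inequality that makes no reference to approximate symmetries.
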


Theorem~\ref{thm:main-thm} follows from a parametrix construction that
comprises the bulk of this paper. The main ingredient in the proof of
Theorem~\ref{thm:mapping-phg} is the pushforward theorem of Melrose.
Theorem~\ref{thm:Lp-estimates} follows from Theorem~\ref{thm:main-thm}
and the $L^{2}$ estimates proved by Vasy~\cite{vasy:2007}.

The paper is broadly divided into three parts. The first part consists
of Sections~\ref{sec:asympt-de-sitt} through \ref{sec:another-blow-up}
and develops the tools necessary to construct the parametrix. The
second part contains the construction, which begins with an outline in
Section~\ref{sec:outline-construction} and concludes in
Section~\ref{sec:full-parametrix}. The last part, consisting of
Sections~\ref{sec:mapping-props-phg} and \ref{sec:lp-estimate}, proves
Theorems~\ref{thm:mapping-phg} and \ref{thm:Lp-estimates}.

\section{Asymptotically de {S}itter spaces}
\label{sec:asympt-de-sitt}

We start by describing the de Sitter space. Recall that hyperbolic
space can be realized as one sheet of the two-sheeted hyperboloid in
Minkowski space. It inherits a Riemannian metric from the Lorentzian
metric in Minkowski space. De Sitter space, on the other hand, is the
one-sheeted hyperboloid $\{ -X_{0}^{2} + \sum _{i=1}^{n}X_{i}^{2} =
1\}$ in Minkowski space, but now the induced metric is Lorentzian. A
good set of coordinates on this space, which is diffeomorphic to
$S^{n}\times \reals$, is 
\begin{align*}
  X_{0} &= \sinh \tau \\
  X_{i} &= \omega _{i} \cosh \tau , 
\end{align*} 
where $\omega _{i}$ are coordinates on the unit sphere. The de Sitter
metric is then 
\begin{equation*}
  -\dtau ^{2} + \cosh^{2}\tau \domega^{2} .
\end{equation*}
If we let $T = e^{-\tau}$ near $\tau = +\infty$, then $\tau = +\infty$
corresponds to $T=0$ and the metric now has the form 
\begin{equation}
  \label{eq:de-sitter-metric}
  \frac{-\dT^{2} + \frac{1}{4}(T^{2}+1)^{2}\domega^{2}}{T^{2}}.
\end{equation}

Our definition of an asymptotically de Sitter space is based on the
form (\ref{eq:de-sitter-metric}) of the de Sitter metric.

\begin{defn}
  \label{def:asymp-de-sitter}
  $(X,g)$ is \emph{asymptotically de Sitter} if $X$ is an
  $n$-dimensional compact manifold with boundary $Y$, $g$ is a
  Lorentzian metric on the interior of $X$, and, for a boundary
  defining function $x$, there is some collar neighborhood
  $[0,\epsilon )_{x}\times Y$ of the boundary on which $g$ has the
  form
  \begin{equation*}
    g = -\frac{dx^{2}}{x^{2}} + \frac{h(x,y,dy)}{x^{2}},
  \end{equation*}
  where $h$ is a smooth symmetric $(0,2)$-tensor on $Y$ that is a
  metric on the boundary $\{ 0\} \times Y$. 
\end{defn}

A calculation similar to the ones in \cite{mazzeo:1988} or
\cite{mazzeo-melrose:1987} shows that the sectional curvatures of
asymptotically de Sitter spaces approach $1$ as $x\to 0$.

Let $\Box$, the wave operator, be the wave operator (d'Alembertian) associated to $g$,
and, for $\lambda \in \reals$, let $P = P(\lambda) = \Box - \lambda$
be the Klein-Gordon operator. We are seeking the fundamental solution
of the Klein-Gordon equation, i.e., a distribution $E_{+,\lambda}$ such
that $P(\lambda)E_{+,\lambda} = I$. Note that the principal symbol
$\sigma _{2}(P)$ is given by the dual metric function of $g$.

\begin{defn}
  \label{def:bichars}
  We say that the \emph{bicharacteristics} of $P$ over $X^{\circ}$,
  the interior of $X$, are the integral curves of the Hamilton vector
  field of the principal symbol $\sigma_{2}(P)$ inside the
  characteristic set of $P$ (the set where $\sigma _{2}(P)$ vanishes).
\end{defn}

Throughout this paper, we assume that both (A1) and (A2) hold. These
assumptions are introduced primarily to ensure that $X$ exhibits a
coherent causal structure.

Because $g$ is conformal to the incomplete pseudo-Riemannian metric
\begin{equation*}
  \hat{g} = -dx^{2} + h
\end{equation*}
near $Y$, the bicharacteristics of $P$ (near $Y$) are
reparametrizations of the bicharacteristics of $-dx^{2} + h$. $g$ is
complete, so the global assumptions imply that each bicharacteristic
$\gamma (t)$ has a limit in $S^{*}_{Y}X$ as $t\to \pm \infty$.

Physicists (e.g.\ \cite{geroch:1970}) have long known that these
assumptions imply the existence of a global `time' function $T\in
C^{\infty}(\overline{X})$ such that $T|_{Y_{\pm}}=\pm 1$ and is
monotone on the nullbicharacteristics of $P$. Note that $1-x$ and
$x-1$ have the desired properties near $Y_{+}$ and $Y_{-}$, so the
assumptions mean that such functions can be extended to all of $X$.
Moreover, $T$ gives a fibration $\overline{X}\to [-1,1]$ and so $X$ is
diffeomorphic to $X\times S$ for a compact manifold $S$. This also
shows that $Y_{+}$ and $Y_{-}$ are both diffeomorphic to $S$.

We also assume that assumption (A3) holds. This assumption ensures
that the boundary of the flowout of the light cone in the cotangent
bundle is actually the cotangent bundle of a submanifold in the base.
It allows us to work with an adapted class of conormal distributions
rather than a class of Lagrangian distributions. Note that the
projection of the flowout is automatically an embedded submanifold for
small times. Moreover, because the nullbicharacteristics agree in the
compact and non-compact settings (i.e., for $\hat{g}$ and $g$), there
is a neighborhood of $Y_{+}$ such that the projection of the flowout
of the forward light cone from any point in this neighborhood is an
embedded submanifold. If we then restrict to data supported in this
neighborhood, we may use our construction below even in the absence of
assumption (A3).

In this paper we adopt the convention that $\int \delta (x-\xt) \delta
(y-\yt)f(\xt, \yt)\dg = f(x,y)$. In particular, this breaks the formal
self-adjointness of the operator $P(\lambda)$. When we seek a right
parametrix, then, we must use the transpose of the operator
$P(\lambda)$ with respect to this metric. We may write this out
explicitly as \begin{equation}
  \label{eq:op-transpose}
  P^{t}(\lambda) = (x\pd[x])^{2} + (n+1)x\pd[x] + \frac{x\pd[x]\sqrt{h}}{\sqrt{h}}x\pd[x] + x^{2} \lap _{y} + n(1 + \frac{x\pd[x]\sqrt{h}}{\sqrt{h}}) - \lambda .
\end{equation}

We also adopt the convention that when we are applying a differential
operator to the variables in the right factor, we use a subscript $R$.

In our consideration of the forward fundamental solution of
$P(\lambda)$, it is useful to have the notion of
\emph{forward-directed} data, which are simply tempered distributions
(or polynomially bounded smooth functions) on $X$ that vanish to all
orders at $Y_{-}$.

\section{The {C}auchy problem and scattering}
\label{sec:existence}

In \cite{vasy:2007}, Vasy studied both the Cauchy problem and the
scattering problem on asymptotically de Sitter spaces. The Cauchy
problem seeks a solution the equation 
\begin{equation}
  \label{eq:Vasy-Cauchy-prob}
  \begin{cases}
    P(\lambda) u = 0, \\
    u |_{\Sigma_{0}} = \psi_{0}, & V u |_{\Sigma_{0}} = \psi _{1}
  \end{cases},
\end{equation}
where $\Sigma_{0}$ is a Cauchy hypersurface, $V$ is a vector field
transverse to $\Sigma_{0}$, and $\psi_{0}, \psi_{1}$ are smooth
functions on $\Sigma_{0}$.

Via a positive commutator argument, Vasy showed global existence and
uniqueness for solutions of the Cauchy
problem~(\ref{eq:Vasy-Cauchy-prob}) on asymptotically de Sitter
spaces. One may use any ``time slice'' $\{ T=\text{const}\}$ from the
diffeomorphism $X \cong \reals \times Y_{+}$ as the Cauchy
hypersurface needed to pose the Cauchy problem.

Vasy further proved the following a global solvability result for the
inhomogeneous equation on asymptotically de Sitter spaces under
assumptions (A1) and (A2). 
\begin{thm}[Theorem 5.4 of \cite{vasy:2007}]
  \label{thm:Vasy-thm-5.4}
  Suppose that $\lambda\in \reals$, and that $l_{\pm}$ satisfy
  \begin{equation*}
    l_{+} > \max (\frac{1}{2}, l(\lambda)), l_{-} < -\max(\frac{1}{2} , l(\lambda)).
  \end{equation*}
  Then for $f\in H^{0,l_{+},l_{-}}_{0}(X)$, $Pu = f$ has a unique
  solution $u\in H^{1,l_{+},l_{-}}_{0}(X)$. 
\end{thm}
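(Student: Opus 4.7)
The plan is to follow Vasy's positive commutator strategy, establishing an a priori estimate of the form
\begin{equation*}
\| u \|_{H^{1,l_{+},l_{-}}_{0}(X)} \leq C \| P u \|_{H^{0,l_{+},l_{-}}_{0}(X)}
\end{equation*}
and then converting this into existence via duality combined with the analogous estimate for the formal adjoint $P^{*}$. Uniqueness will be an immediate consequence of the estimate applied to $u$ with $P u = 0$.

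First I would construct a commutant $A$ adapted to the global time function $T \in C^{\infty}(\overline{X})$ supplied by assumptions (A1) and (A2). Near $Y_{+}$ the weight in $A$ should behave like $\phi(T) x^{-2l_{+}}$, near $Y_{-}$ like $\phi(T) x^{-2l_{-}}$, interpolated through the interior via a monotone function of $T$. Computing $i[P,A] + (P - P^{*})A$, using the explicit expression~\eqref{eq:op-transpose} for $P^{t}(\lambda)$ near the boundary and the standard energy identity in the interior, produces three kinds of contributions: an interior bulk term of definite sign coming from the $T$-derivative of $\phi$, which controls the 0-derivatives of $u$; boundary-type contributions at $Y_{\pm}$ from differentiating the weights $x^{-2l_{\pm}}$; and a zeroth-order contribution coupling $\lambda$ to the weight. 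The hypotheses $l_{+} > \max(\tfrac{1}{2}, l(\lambda))$ and $l_{-} < -\max(\tfrac{1}{2}, l(\lambda))$ are precisely what is needed for the boundary-type terms to dominate the $\lambda$-contribution with the correct sign at each boundary hypersurface, yielding coercivity for the full weighted $H^{1}_{0}$-norm.

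Second, because the pair $(-l_{-}, -l_{+})$ satisfies the same threshold conditions with the roles of $Y_{+}$ and $Y_{-}$ swapped, the same positive commutator argument applied to $P^{*}$ yields
\begin{equation*}
\| v \|_{H^{1,-l_{-},-l_{+}}_{0}(X)} \leq C \| P^{*} v \|_{H^{0,-l_{-},-l_{+}}_{0}(X)}.
\end{equation*}
A standard Hahn--Banach / Riesz representation argument (defining $\ell(P^{*}v) = \langle f, v\rangle$ on the range of $P^{*}$, which is well-defined and bounded thanks to the adjoint estimate and injective thanks to the same estimate) then produces, for each $f \in H^{0,l_{+},l_{-}}_{0}(X)$, a weak solution $u \in H^{1,l_{+},l_{-}}_{0}(X)$ of $P u = f$, and the forward estimate upgrades this to the desired quantitative statement.

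The main technical obstacle is the construction of $A$ and the verification of positivity everywhere at once. One cannot take a pure power of $x$ as weight because the signs of $l_{+}$ and $l_{-}$ are opposite, so the weight must vary monotonically along bicharacteristics through the interior via a carefully chosen $\phi(T)$, whose derivative is precisely what supplies the interior positivity needed to control the $H^{1}_{0}$ derivatives. Meanwhile, the thresholds $\max(\tfrac{1}{2}, l(\lambda))$ come from an indicial-operator calculation at $Y_{\pm}$: below these thresholds the indicial roots of $P(\lambda)$ spoil the positivity from the $x$-differentiation, and the estimate genuinely fails. Pinning down the correct form of $A$ so that every error term is either absorbable into the main positive term or has the right sign at the appropriate boundary hypersurface is the heart of the argument.
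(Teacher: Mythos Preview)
This theorem is not proved in the present paper at all: it is quoted verbatim as Theorem~5.4 of Vasy~\cite{vasy:2007} and used as a black box. There is therefore no ``paper's own proof'' to compare against here. The only information the paper gives about the method is the remark just before the statement that Vasy established global existence and uniqueness ``via a positive commutator argument,'' which is exactly the strategy you outline.

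As a sketch of Vasy's argument your proposal is on the right track: a commutant built from the global time function with weights $x^{-2l_{\pm}}$ at the two ends, the threshold $l(\lambda)$ arising from the indicial roots $s_{\pm}(\lambda)$ of $P(\lambda)$ at the boundary, and existence obtained by running the dual estimate for $P^{*}$ with the roles of $Y_{+}$ and $Y_{-}$ exchanged. One point to be more careful about is your attribution of the threshold $\tfrac{1}{2}$ purely to an ``indicial-operator calculation''; in Vasy's argument the $\tfrac{1}{2}$ enters through the weighted $L^{2}$ pairing and the commutator with $x^{-2l_{\pm}}$ rather than from the indicial roots themselves, while $l(\lambda)$ is the genuinely spectral threshold. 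But this is a matter of emphasis rather than a gap, and since the paper does not reproduce the proof, there is nothing further to compare.
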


Here $l(\lambda) = \Re\sqrt{\frac{(n-1)^{2}}{4} + \lambda}$ while the
$H^{m,q_{+},q_{-}}_{0} = x_{+}^{q_{+}}x_{-}^{q_{-}}H^{m}_{0}$ measure
both regularity and decay at $Y_{+}$,$Y_{-}$ separately. The spaces
$H^{m}_{0}$ are weighted Sobolev spaces measuring regularity with
respect to the $0$-vector fields defined in the following section.

In particular, for any $f\in \dot{C}^{\infty}(X)$ (indeed, for any
forward-directed tempered smooth function), there is a unique solution
$u\in x_{+}^{-\infty}x_{-}^{\infty}H^{1}_{0}(X)$ to $Pu=f$. (Here
$x_{\pm}$ is a defining function for $Y_{\pm}$, so the notation means
that $u$ is tempered at $Y_{+}$ and vanishes to infinite order at
$Y_{-}$.) There is thus a distribution $E_{+}$ on $X\times X$ that can
be called the forward fundamental solution of $P$. In other words,
$E_{+}$ is such that for each $p\in X$, $PE_{+}(p) = \delta _{p}$ and
$E_{+}(p)(q) \equiv 0$ for $q\in X$ not in the domain of influence of
$p$. In terms of the identification $X \cong \reals_{t} \times Y_{+}$,
this means that $E_{+}(p)(q)$ vanishes when $t(q) < t(p)$.

Vasy (cf.\ Theorem 5.5 of \cite{vasy:2007}) proved also that solutions
of the Cauchy problem~(\ref{eq:Vasy-Cauchy-prob}) have unique
asymptotic expansions near the boundary $Y$ of $X$ of the form
\begin{equation*}
  u = u_{+}x^{\frac{n-1}{2}+\sqrt{\frac{(n-1)^{2}}{4}+\lambda}} + u_{-}x^{\frac{n-1}{2}-\sqrt{\frac{(n-1)^{2}}{4}+\lambda}},
\end{equation*}
where $u|_{\pm}$ are smooth up to the boundary of $X$.  When the
difference between the two exponents is an integer, then $u_{-}$ is
instead only in $C^{\infty}(X) +
x^{2\sqrt{\frac{(n-1)^{2}}{4}+\lambda}}(\log x)C^{\infty}(X)$.
Moreover, one may specify $u_{\pm}|_{Y_{-}}$, uniquely determining
$u_{\pm}|_{Y_{+}}$.  He showed (cf.\ Theorem 7.21 of \cite{vasy:2007})
that when there are no integer coincidences the map sending the data
\begin{equation*}
  (u_{+}|_{Y_{-}}, u_{-}|_{Y_{-}}) \mapsto (u_{+}|_{Y_{+}}, u_{-}|_{Y_{+}})
\end{equation*}
is a Fourier integral operator associated to the canonical relation
given by bicharacteristic flow from $Y_{-}$ to $Y_{+}$. This is via an
explicit parametrix construction on an appropriate blow-up of $X\times
Y$, and the static model of de Sitter space appears in the
consideration of the rescaled normal operator at the front face of
this blow-up.

Observe now that for $(\xt_{0},\yt_{0})\in X$ away from the boundary
of $X$, the restriction of the kernel of the fundamental solution to
the slice $\xt = \xt_{0}$ agrees with a multiple of the solution of
the Cauchy problem 
\begin{align*}
  &P(\lambda) u = 0, \\
  &u(\xt_{0}, \cdot ) = 0, \\
  &Vu(\xt_{0}, \cdot ) = \delta (y-\yt),
\end{align*}
at least in the region to the future of $(\xt_{0},\yt_{0})$. (Here $V$
is as in equation~\eqref{eq:Vasy-Cauchy-prob}, i.e., a linear
combination of $\xt\pd[\xt]$ and $\xt\pd[\yt]$ transverse to $\{ \xt =
\xt_{0}\}$.) In particular, we can understand the fundamental solution
in this region by understanding the solution operator for the Cauchy
problem, which Vasy studied.

The solution operator for the Cauchy problem is the composition of the
Poisson operator and the Cauchy-to-scattering operator. Vasy showed
that these are Fourier integral operators, and their canonical
relations intersect transversely, so we can compose them. The result
is an operator with canonical relation given by the restriction of the
conormal bundle of the flowout of the light cone, restricted to $\xt =
\xt _{0}$ but lifted to a blown-up space. This blown-up space agrees
with a slice of the space we define later. Vasy's construction blows
up $[X\times Y_{+} , \text{diag}Y_{+}]$, which turns into what we call
$\lcfp$ under this composition. Applying this operator to a delta
distribution gives a conormal distribution on our space, and the log
terms in Vasy's construction become the log terms in our construction,
though this requires careful bookkeeping. Viewing this as a
distribution parametrized by $\xt$ gives the fundamental solution in a
neighborhood of the interior of what we call $\lfp$ away from the
other boundary hypersurfaces.

\section{$0$-geometry}
\label{sec:0-geometry}

Recall from \cite{mazzeo-melrose:1987} the Lie algebra of $0$-vector
fields, 
\begin{equation*}
  \mathcal{V}_{0} = \{ \text{vector fields vanishing at } \pd X\}.
\end{equation*}
The universal enveloping algebra of this Lie algebra is the algebra of
$0$-differential operators, $\Diff[*]_{0}(X)$.

We may use the Lie algebra $\mathcal{V}_{0}(X)$ to define the natural
tensor bundle $\Tzero X$, whose smooth sections are elements of
$\mathcal{V}_{0}(X)$. Its dual, $\Tzero ^{*}X$, has smooth sections
spanned (over $C^{\infty}(X)$) by $\frac{\dx}{x}$ and $\frac{\dy
  _{j}}{x}$.

Recall that $T^{*}X$ is endowed with a canonical $1$-form given by
\begin{equation*}
  \alpha = \xi \dx + \mu \cdot \dy = x\xi \frac{\dx}{x} + x\mu \cdot \frac{\dy}{x}.
\end{equation*}
In particular, $\Tzero ^{*}X$ is endowed with the canonical $1$-form
\begin{equation}
  \label{eq:1-form-single-space-Tzero}
  \azero = \tau \frac{\dx}{x} + \eta \cdot \frac{\dy}{x}.
\end{equation}

Just as $T^{*}X$ is endowed with a symplectic form given by $\omega =
d\alpha$, $\Tzero ^{*}X$ is endowed with the symplectic form $\wzero =
d\azero$.

As in \cite{mazzeo:1988} or \cite{mazzeo-melrose:1987}, we define the
$0$-double space $X\times _{0}X = \dblzero$ as the blown-up space
$[X\times X, \pd \diag]$. $X\times _{0}X$ is a manifold with corners
agreeing with $X\times X$ on the interior. It has three boundary
hypersurfaces -- the left face $\leftface = \{ x=0\}$, which is the
lift of the boundary hypersurface $\{ 0\} \times X$ in $X\times X$;
the right face $\rightface = \{ \xt = 0\}$, which is the lift of
$X\times \{ 0\}$; and the front face $\frontface$, which is the
boundary hypersurface introduced by the blow up construction. Recall
also that $\dblzero$ is equipped with a blow-down map $\beta :
\dblzero \to X\times X$.  A neighborhood of the front face
$\frontface$ is depicted in Figure~\ref{fig:dblspace}.

\begin{figure}[ht]
  \centering
  \begin{tikzpicture}
    \draw [<->] (-4,3) to (0,3) node[anchor=north](yold){$y,\tilde{y}$};
    \draw [->] (-2,3) to (-2, 4.5) node[anchor=west](xtold){$\tilde{x}$};
    \draw [->] (-2,3) to (-1,2) node[anchor=west](xold) {$x$};
    \draw (-2,3) to (0,4) node[anchor=west](diagold){$\diag$};

    \draw [<-, color=StanfordRed, very thick] (0,4.5) .. controls (1, 5) .. (2,4.5);

    \draw [<-] (1,3) to (2,3);
    \draw [->] (4,3) to (5,3) node [anchor=north] (y){$y,\tilde{y}$};
    \draw (2,3) .. controls (3,3.5) .. (4,3) coordinate [pos=0.5,anchor=center] (top);
    \draw (4,3) .. controls (3,2.7) .. (2,3) coordinate [pos=0.5] (bottom);
    \draw [->] (top) to (3, 4.5) node [anchor=west](xt){$\tilde{x}$};
    \draw [->] (bottom) to (4 , 2) node [anchor=west](x){$x$};
    \draw [draw=white,double=black, very thick] (3,3) to (5,4) node [anchor=west](diag){$\diag _{0}$};
  \end{tikzpicture}
  \caption{Passing from $X\times X$ to the $0$-double space $X^2_0 = [X^{2}, \pd\diag ]$.}
  \label{fig:dblspace}
\end{figure}

This construction is perhaps best thought of as an invariant way of
introducing spherical coordinates near the boundary of the diagonal in
$X\times X$. Indeed, a valid coordinate system near the front face
$\frontface$ is given by spherical coordinates: 
\begin{align*}
  \rho _{\frontface} = \sqrt{ x^{2} + \xt ^{2} + |y-\yt|^{2}} \\
  \theta _{0} = \frac{x}{\rho _{\frontface}}, \quad 
  \theta _{n} = \frac{\xt}{\rho_{\frontface}}, \quad 
  \theta ' = \frac{y-y'}{\rho_{\frontface}}, \notag 
\end{align*} 
where $\theta \in S^{n}_{++}$, a quarter sphere. It is often more
convenient to work with projective coordinates near the front face:
\begin{align*}
  s = \frac{x}{\xt}, z = \frac{y-\yt}{\xt}, \xt, \yt .
\end{align*}

In the projective coordinates $(s,z,\xt,\yt)$, we may compute the
lifts of the vector fields $x\pd[x],x\pd[y],\xt\pd[\xt]$, and
$\xt\pd[\yt]$. Indeed, we have 
\begin{align*}
  x\pd[x] \liftsto s\pd[s] &\quad x\pd[y] \liftsto s\pd[z] \\
  \xt\pd[\xt] \liftsto \xt\pd[\xt] - s\pd[s] -z\cdot\pd[z] &\quad \xt\pd[\yt] \liftsto \xt\pd[\yt] -\pd[z]. \notag
\end{align*}

Recall that the front face is the total space of a fibration. Indeed,
it is the total space of a bundle over $Y$ with quarter-sphere fibers.
In local projective coordinates $(s,z,\xt,\yt)$ near the front face,
the fibers are given by $\{ \xt = 0, \yt = \text{ const}\}$. An
asymptotically de Sitter metric on $X$ induces a Lorentzian metric
$-\frac{\ds ^{2}}{s^{2}} + \frac{h(0, \yt, \dz)}{s^{2}}$ on the
interior of the fibers. By an affine change of coordinates in the
fiber, this metric may be written as $-\frac{\ds^{2}}{s^{2}} +
\frac{\dz^{2}}{s^{2}}$, which is the Wick rotation of the (negative
definite) hyperbolic metric on an upper half-space.

Due to our assumptions (A1) and (A2), the space $X$ has two boundary
components $Y_{+}$ and $Y_{-}$, so it is useful at this point to give
names to the components of the left face, right face, and front face.
We call $\ffp$ the component of the front face that comes from blowing
up the diagonal near $Y_{+} \times Y_{+}$, and $\ffm$ the other
component. Similarly, we give the name $\lfp$ to the lift of the
hypersurface $Y_{+} \times X$ and $\lfm$ to the lift of $Y_{-}\times
X$ (and $\rfp$, $\rfm$ denote the lifts of $X\times Y_{+}$ and
$X\times Y_{-}$, respectively).  This naming scheme is illustrated in
Figures~\ref{fig:2-d-dblspace} and \ref{fig:3d-dblspace}.

\begin{figure}[ht]
  \centering
  \begin{tikzpicture}
    \draw [-] (-2,-1) to (-2,2) node [pos=0.5,anchor=east] (rfm) {$\rfm$};
    \draw [-] (2,-2) to (2,1) node [pos=0.5, anchor=west] (rfp) {$\rfp$};
    \draw [-] (2,-2) to (-1,-2) node [pos=0.5, anchor=north] (lfm) {$\lfm$};
    \draw [-] (-2,2) to (1,2) node [pos=0.5, anchor=south] (lfp) {$\lfp$};
    \draw (-1,-2) arc (0:90:1);
    \draw (1,2) arc (180:270:1);
    \draw [-] (1.3,1.3) node [anchor=south west] (ffp) {$\ffp$} to (-1.3,-1.3) node [anchor = north east] (ffm) {$\ffm$} node [pos=0.5, anchor = south east] (diag) {$\diag_{0}$};  
  \end{tikzpicture}
  \caption{A two-dimensional view of $\dblzero$.}
  \label{fig:2-d-dblspace}
\end{figure}

\begin{figure}[ht]
  \centering
  \begin{tikzpicture}
    \coordinate (fl) at (-4,1);
    \coordinate (fr) at (4,1);
    
    \coordinate (ll) at (-4,0);
    \coordinate (lr) at (-1,0);
    \path (ll) to (lr) coordinate [pos = 0.5] (lm) 
    coordinate [pos = 0.33] (llm) 
    coordinate [pos = 0.77] (lrm);

    \coordinate (rl) at (1,0);
    \coordinate (rr) at (4,0);
    \path (rr) to (rl) coordinate [pos = 0.5] (rm)
    coordinate [pos = 0.33] (rrm)
    coordinate [pos = 0.77] (rlm);

    \coordinate (tl) at (-1.5,2);
    \coordinate (tr) at (1.5,2);
    \path (tl) to (tr) coordinate [pos = 0.5] (tm);

    \coordinate (bl) at (-1.5,-2);
    \coordinate (br) at (1.5,-2);
    \path (bl) to (br) coordinate [pos = 0.5] (bm);
  
    \draw [-] (ll) to (llm);
    \draw [-] (lr) to (lrm);

    \draw [-] (rl) to (rlm);
    \draw [-] (rr) to (rrm);

    \draw [-] (tl) to (tr);
    \draw [-] (bl) to (br);
    
    \draw [-] (ll) to (tl) node [pos = 0.7, anchor = east] {$\rfm$};
    \draw [-] (lm) to (tm);
    \draw [-] (lr) to (tr);
    
    \draw [-] (ll) to (bl) node [pos = 0.7, anchor = east] {$\lfm$};
    \draw [-] (lm) to (bm);
    \draw [-] (lr) to (br);

    \draw [-] (intersection of lr--tr and tl--rl) to (rl);
    \draw [-] (intersection of lr--tr and tm--rm) to (rm);
    \draw [dashed] (intersection of lr--tr and tm--rm) to (tm);
    \draw [dashed] (intersection of lr--tr and tl--rl) to (tl);
    \draw [-] (tr) to (rr) node [pos = 0.3, anchor = west] {$\lfp$};

    \draw [-] (intersection of lr--br and bl--rl) to (rl);
    \draw [-] (intersection of lr--br and bm--rm) to (rm);
    \draw [dashed] (intersection of lr--br and bm--rm) to (bm);
    \draw [dashed] (intersection of lr--br and bl--rl) to (bl);
    \draw [-] (br) to (rr) node [pos = 0.3, anchor = north west] {$\rfp$};

    \filldraw [fill = white, draw = black] (llm)  .. controls +(0.8,0.4) .. (lrm) .. controls +(-0.4,-0.3) .. (llm) coordinate [pos = 0.5] (lmb);
    
    \filldraw [fill = white, draw = black] (rrm) .. controls +(-0.8,0.4) .. (rlm)
    ..controls +(0.4, -0.3) .. (rrm);

    \draw (lm) to (fl) node [anchor = south] {$\ffm$};
    \draw (rm) to (fr) node [anchor = south] {$\ffp$};
  \end{tikzpicture}
  \caption{Another view of $\dblzero$. The horizontal lines represent the $y,\tilde{y}$ axes.}
  \label{fig:3d-dblspace}
\end{figure}

We should also recall the trivial half-density bundles $\hd (X)$ and
$\hd (\dblzero)$, which are trivial bundles. In local coordinates
$(x,y)$, $\hd (X)$ is trivialized by the global section $\gamma =
\left|\dg\right|^{1/2}$. The bundle $\hd (X\times X)$ is trivialized
by $\upsilon = \left|\dg_{L}\dg_{R}\right|^{1/2}$, where $\dg_{L}$ and
$\dg_{R}$ are the lifts of the densities from the left and right
factors of $X\times X$. Up to a nonvanishing factor, the Jacobian
determinant of the blow-down map $\dblzero \to X\times X$ is $r^{n}$,
so $\upsilon$ lifts to $r^{n/2}\mu$, where $\mu$ is a nonvanishing
section of the standard half-density bundle $\hd (\dblzero)$. For
example, we may take $\mu = \left| \dr\dtheta\dyt\right|^{1/2}$ in
local polar coordinates. In projective coordinates, we may take $\mu =
\left| \ds\dz\dxt\dyt\right|^{1/2}$, and then $\upsilon$ lifts to
$\xt^{n/2}\mu$ (times a nonvanishing factor). In this paper, we adopt
the convention that $\upsilon$ and $\gamma$ are flat, i.e., for a
differential operator $L$, $L(u\gamma) = (Lu)\gamma$ and $L(u\upsilon)
= (Lu)\upsilon$. In particular, 
\begin{equation*}
  L(u\mu) = L(\xt^{-n/2}u\upsilon) = (L(\xt ^{-n/2}u))\upsilon = (\xt^{n/2}L(\xt^{-n/2}u))\mu .
\end{equation*}
Our operator $P(\lambda)$ commutes with $\xt$, so $P(\lambda)(u\mu) =
(P(\lambda)u)\mu$.

On the fibers of the front face, elements of $\Diff[*]_{0}(X)$ lift to
differential operators on $\dblzero$ that restrict to differential
operators on the fibers of the front face by setting $\xt = 0$. We
call this restriction to the fiber over $p\in Y$ the \emph{normal
  operator} and denote it as $N_{p}(A)$ for $A\in \Diff[*]_{0}(X)$. As
an example, in projective coordinates the normal operator of
$P(\lambda)$ is given by
\begin{equation}
  \label{eq:normal-op-of-P}
  N_{p}(P(\lambda)) = (s\pd[s])^{2} - (n-1)(s\pd[s]) + s^{2}\lap _{z} - \lambda .
\end{equation}

The inclusions $\frontface \subset \dblzero$ and $\leftface \subset
\dblzero$ induce inclusions on the tangent bundles $T\frontface
\subset T_{\frontface}\dblzero$ and $T\leftface\subset
T_{\leftface}\dblzero$. These inclusions induce natural projections on
the cotangent bundle: 
\begin{equation*}
  \pi_{\frontface}:T_{\frontface}^{*}\dblzero \to T^{*}\frontface \quad \text{and}\quad \pi_{\leftface}T_{\leftface}^{*}\dblzero \to T^{*}\leftface. 
\end{equation*}

Let $2\dblzero$ denote the doubling of $\dblzero$ across $\frontface$,
and let $T^{*}_{\frontface}\dblzero$ denote the restriction of the
cotangent space to the front face. As in \cite{joshi-sabarreto:2001},
we say that a smooth closed conic Lagrangian submanifold
$\Lambda\subset T^{*}\dblzero$ is \emph{extendible} if it intersects
$T^{*}_{\frontface}\dblzero$ transversely. Recall that in this case
there is a smooth closed conic Lagrangian submanifold
$\Lambda^{e}\subset T^{*}2\dblzero$ such that 
\begin{equation*}
  \Lambda = \Lambda^{e}\cap T^{*}\dblzero, \quad \Lambda^{0} = \pi _{\frontface}(\Lambda \cap T^{*}_{\frontface}\dblzero).
\end{equation*}
Note that there are many choices for the extension $\Lambda^{e}$.

We recall a result \cite{joshi-sabarreto:2001}:
\begin{lem}[Lemma 2.1 of \cite{joshi-sabarreto:2001}]
  \label{lem:0G-extendible-Lagrangian-restriction}
  Let $\Lambda\subset T^{*}\dblzero$ be an extendible Lagrangian. Then
  $\Lambda_{0} = \pi_{\frontface}(\Lambda\cap
  T^{*}_{\frontface}\dblzero)$ is a Lagrangian submanifold of
  $T^{*}\frontface$. 
\end{lem}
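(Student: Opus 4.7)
The plan is to reduce the statement to the standard fact in symplectic geometry that the symplectic reduction of a cleanly intersected Lagrangian by a coisotropic hypersurface is again Lagrangian. To make this rigorous near the boundary of $\dblzero$, I would pass to the doubled space $2\dblzero$, where $\frontface$ appears as a smooth interior hypersurface and the extension $\Lambda^{e}\subset T^{*}(2\dblzero)$ is a genuine smooth conic Lagrangian. By hypothesis, $\Lambda^{e}$ meets $T^{*}_{\frontface}(2\dblzero)$ transversely, and the identity $\Lambda \cap T^{*}_{\frontface}\dblzero = \Lambda^{e}\cap T^{*}_{\frontface}(2\dblzero)$ lets us work with $\Lambda^{e}$ throughout; the answer $\Lambda_{0}$ will then be manifestly independent of the choice of extension.

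Near an interior point of $\frontface$ I would choose a local defining function $\rho$ for $\frontface$ together with coordinates $y$ on $\frontface$, extending to Darboux coordinates $(\rho, y, \tau, \eta)$ on $T^{*}(2\dblzero)$ for which the symplectic form is $\omega = d\tau \wedge d\rho + d\eta \wedge dy$. In these coordinates, $T^{*}_{\frontface}(2\dblzero) = \{\rho = 0\}$ is a coisotropic hypersurface whose null foliation is generated by the Hamilton vector field of $\rho$, namely $\partial_{\tau}$, and $\pi_{\frontface}$ is the projection $(y, \tau, \eta) \mapsto (y, \eta)$ realizing the symplectic reduction of $T^{*}(2\dblzero)$ along $\{\rho = 0\}$ as $T^{*}\frontface$ with form $\omega_{\frontface} = d\eta \wedge dy$. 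Transversality makes $\Lambda^{e} \cap \{\rho = 0\}$ a smooth submanifold of dimension $2n-1$. The key intermediate claim is that $\pi_{\frontface}$ is an immersion on this intersection, i.e.\ $\partial_{\tau}$ is nowhere tangent to $\Lambda^{e}\cap\{\rho = 0\}$; this follows from the identity $\iota_{\partial_{\tau}}\omega = d\rho$ together with the Lagrangian condition $T\Lambda^{e} = (T\Lambda^{e})^{\omega\perp}$, which together imply that $\partial_{\tau}\in T\Lambda^{e}$ iff $d\rho$ annihilates $T\Lambda^{e}$, precisely what transversality forbids.

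The image $\Lambda_{0}$ therefore has dimension $2n - 1 = \tfrac{1}{2}\dim T^{*}\frontface$. For isotropy, the identity $\pi_{\frontface}^{*}\omega_{\frontface} = \omega|_{T\{\rho = 0\}}$, valid because $d\rho$ vanishes on vectors tangent to $\{\rho = 0\}$, combined with $\omega|_{\Lambda^{e}} = 0$ forces $\omega_{\frontface}$ to vanish on $\Lambda_{0}$. An isotropic submanifold of half the ambient dimension is Lagrangian, which is the desired conclusion. The main obstacle, and the only place the extendibility hypothesis is used, is the immersion claim; the rest is formal symplectic reduction plus bookkeeping to handle the doubled space. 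Near the corners of $\frontface$ the argument is local and the same Darboux chart construction works in any ambient smoothing around $\frontface$ provided by the doubling.
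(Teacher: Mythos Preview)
The paper does not prove this lemma; it merely recalls it as Lemma~2.1 of Joshi--S\'a Barreto~\cite{joshi-sabarreto:2001} and cites that reference. Your argument is the standard coisotropic reduction proof and is correct: passing to $2\dblzero$ so that $\frontface$ is an interior hypersurface, you identify $T^{*}_{\frontface}(2\dblzero)=\{\rho=0\}$ as coisotropic with null direction $\partial_{\tau}$, use the Lagrangian condition $T\Lambda^{e}=(T\Lambda^{e})^{\omega\perp}$ together with $\iota_{\partial_{\tau}}\omega=d\rho$ to rule out $\partial_{\tau}\in T\Lambda^{e}$ under the transversality hypothesis, and conclude that $\pi_{\frontface}$ immerses $\Lambda^{e}\cap\{\rho=0\}$ onto an isotropic of the correct dimension. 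This is precisely the line of reasoning in the cited reference, so there is no genuine divergence to discuss.

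One small remark: your immersion argument gives $\Lambda_{0}$ as an immersed Lagrangian; to get a closed embedded submanifold you would also invoke that $\Lambda$ is closed and conic, so that distinct leaves of the null foliation through $\Lambda^{e}\cap\{\rho=0\}$ cannot accumulate. This is routine and implicit in the standard treatment, but worth a sentence if you want the statement exactly as written.
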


For future use, we collect a bit more information about the symplectic
structure of $T^{*}\dblzero$. The space $\Tzero ^{*}X \times \Tzero
^{*}X$ is endowed with the symplectic form 
\begin{equation}
  \label{eq:0G-symplectic-form-on-X2}
  \omega = \pi _{1} ^{*} \omega _{X}+ \pi _{2}^{*}\omega _{X},
\end{equation}
where $\omega _{X}$ is the symplectic form on $\Tzero ^{*}X$ (coming from the canonical $1$-form (\ref{eq:1-form-single-space-Tzero})) and $\pi_{j}$ the projection of the $j$th copy of $\Tzero ^{*}X$.  Moreover, the blow-down map $\beta$ induces a smooth map 
\begin{equation*}
  \Tzero^{*}X \times \Tzero^{*}X \to T^{*}\dblzero 
\end{equation*}
which is an isomorphism on the interior.

The identification of $T^{*}X$ and $\Tzero ^{*}X$ over the interior of
$X$ then induces a smooth map $\Tzero ^{*}X \times \Tzero ^{*}X \to
T^{*}\dblzero$ over the interior of $X\times X$. This map identifies
the $1$-forms 
\begin{equation*}
  \tau \frac{\dx}{x} + \tilde{\tau}\frac{\dxt}{\xt} + \eta \cdot \frac{\dy}{x} + \tilde{\eta}\cdot \frac{\dyt}{\xt} \quad\text{ and }\quad \sigma \ds + \xi \dxt + \zeta \cdot \dz + \mu \cdot \dyt,
\end{equation*}
and so we must have
\begin{align}
  \label{eq:0G-cotangent-coordinates}
  \sigma = \frac{\tau\xt}{x} , &\quad \xi = \frac{\tau}{\xt} + \frac{\tilde{\tau}}{\xt} + \frac{\eta}{x}\cdot \frac{y-\yt}{\xt} \\
  \zeta = \frac{\xt \eta}{x}, &\quad \mu = \frac{\eta}{x} + \frac{\tilde{\eta}}{\xt}. \notag 
\end{align} 
Observe that in these coordinates, the symplectic form $\omega$ takes
a familiar form: 
\begin{align*}
  \differential{\sigma}\wedge \ds + \dxi \wedge \dxt + \differential{\zeta}\wedge\dz + \dmu \wedge \dyt.
\end{align*} 

\section{Polyhomogeneity and conormal distributions}
\label{sec:polyhomogeneity}

In order to consider the pushforward of a distribution, we need the
notion of a b-fibration. We recall from \cite{melrose:1992} that an
\emph{interior b-map} $f:X\to Y$ of manifolds with corners is a function
mapping $X\to Y$ and $X\setminus \pd X \to Y\setminus \pd Y$ such that
each boundary defining function for $Y$ pulls back to a sum of
products of boundary defining functions for $X$. More precisely,
suppose that $M_{1}(X)$ is the set of boundary hypersurfaces of $X$,
and, for each $G\in M_{1}(X)$, let $I(G)$ be the ideal of functions
vanishing at $G$. Suppose that $H$ is a boundary hypersurface for $Y$
and $I(H)$ is the ideal of functions vanishing on $H$. We say that $f:
X\to Y$ is an interior b-map if 
\begin{equation*}
  f^{*}I(H) = \prod _{G\in M_{1}(X)}I(G)^{e_{f}(G,H)}.
\end{equation*}
Here $e_{f}(G,H)$ is a collection of nonnegative integers that we call
the exponent matrix of $f$. We set $\null (e_{f}) = \{ H \in
M_{1}(X) : e_{f}(H,G) = 0 \forall G\in M_{1}(Y)\}$.

For an interior b-map $f$, the differential $f_{*}: T_{p}X \to
T_{f(p)}Y$ extends by continuity to the b-differential
\begin{equation*}
  f_{*}: {}^{b}T_{p}X \to {}^{b}T_{f(p)}Y.
\end{equation*}

Recall that a \emph{b-fibration} between two manifolds with corners is one
that is both b-normal and a \emph{b-submersion}. A b-submersion is a b-map
with surjective b-differential. A b-normal map is one such that the
b-differential is surjective as a map ${}^{b}N_{x}H \to
{}^{b}N_{f(x)}G$ on the interior of each boundary hypersurface $H$.
Here $G$ is the face that contains the image of $H$.

We say that a discrete subset $E \subset \complexes\times \naturals$
is an \emph{index set} if $(z_{j}, k_{j}) \in E$ with $|(z_{j} , k_{j}) |\to
\infty$ implies that $\Re z_{j} \to \infty$, $(z,k)\in E$ implies that
$(z+p, k)\in E$ for all $p\in \naturals$, and if $(z,k)\in E$, then
$(z,p)\in E$ for all $p\in \naturals$ with $0\leq p < k$. We say that
$\E = \{ E_{H} : H \text{ is a boundary hypersurface of }M\}$ is an
index family if each $E_{H}$ is an index set.

Our construction below shows that certain distributions have
asymptotic expansions. We make this notion more precise by recalling
the definition of a polyhomogeneous distribution. 
\begin{defn}
  \label{defn:phg-dist} 
  A \emph{polyhomogeneous distribution} is a distribution that is smooth on the interior
  of $M$, and, near each boundary hypersurface $H$ of $M$ has an
  asymptotic expansion of the form
  \begin{equation*}
    \sum _{(s,l)\in E_{H}}x_{H}^{s}(\log x_{H})^{l} a_{sl}(x,y),
  \end{equation*}
  where $x_{H}$ is a defining function for $H$, $E_{H}$ is an index set for $H$, and $a_{sl}$ are smooth functions independent of $x_{H}$.  Near the corners of $M$, we require that polyhomogeneous distributions have an appropriate product-type expansion.
\end{defn}
For an extended discussion of polyhomogeneous distributions, we refer
the reader to \cite{mazzeo:1991}, \cite{melrose:1992}, or \cite{melrose:1993}. We adopt the convention that $a_{sl}$ should take
values in the half-density bundles.

Recall that if $f$ and $g$ are polyhomogeneous with index families
$\mathcal{F}$ and $\mathcal{G}$, then $fg$ is polyhomogeneous with
index set $\mathcal{F}+\mathcal{G}$.

We also require the following lemma, which can be found in
\cite{melrose:1992} or \cite{melrose:1993}, and allows us to
understand the polyhomogeneity of the pushforward of a distribution:
\begin{lem}
  \label{lem:phg-b-fibration-pushforward}
  If $f: X\to Y$ is a b-fibration between compact manifolds with
  corners, then for any index family $\mathcal{K}$ for $X$ with
  \begin{equation*}
    \Re \mathcal{K}(H) > -1 \quad \text{if }H\in \null (e_{f}),
  \end{equation*}
  the pushforward gives
  \begin{align*}
    f_{*}&: \phg{\mathcal{K}}{X}{\Omega \otimes f^{*}E}\to \phg{\mathcal{J}}{Y}{\Omega \otimes E} , \\
    \mathcal{J} &= f_{\#}\mathcal{K}. \notag
  \end{align*}
\end{lem}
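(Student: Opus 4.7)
The plan is to reduce to a local model and invoke the structure theorem for b-fibrations. Since polyhomogeneity of the pushforward is a local property on $Y$ and asymptotic expansions at boundary faces can be checked one face (or corner) at a time using a partition of unity, I would first cover $Y$ by coordinate charts adapted to its corner structure. Pulling back through $f$ and refining a compatible partition of unity on $X$, it suffices to verify the statement in a neighborhood of a point $p \in X$ with $f(p)$ in a chosen corner of $Y$, working in a product model where each factor is a half-line or a line.

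Next, I would invoke the normal form for b-fibrations: after a smooth change of coordinates near $p$, the map takes a monomial form on boundary defining functions. If $x_1, \ldots, x_k$ define the boundary hypersurfaces $H_1, \ldots, H_k$ of $X$ through $p$ and $y_1, \ldots, y_m$ define the faces $G_1, \ldots, G_m$ of $Y$ through $f(p)$, then $f^* y_j = u_j \prod_i x_i^{e_f(H_i, G_j)}$ with each $u_j$ smooth and strictly positive, and the interior coordinates map smoothly. The b-submersion hypothesis makes $f$ surjective on b-tangent spaces, while b-normality forces every non-null boundary hypersurface $H_i$ to map into at most one face $G_j$, so the exponent matrix has a single nonzero entry per non-null row and zero rows on $\operatorname{null}(e_f)$. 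The half-density $\Omega$ on $X$ factors accordingly into a piece that integrates along the fiber and a piece pulled back from $Y$ (together with $f^* E$, which is constant along fibers).

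With this normal form I would compute $f_* u$ for a polyhomogeneous half-density $u$ with index family $\mathcal{K}$ by expanding $u$ face by face and integrating term by term. Integrations along smooth interior fiber directions preserve smoothness trivially. Integrations in defining functions $x_H$ of $H \in \operatorname{null}(e_f)$ amount to integrals on $[0,\epsilon)$ of expressions $x_H^{s}(\log x_H)^{l}$ times a smooth factor; these converge precisely when $\Re s > -1$, which is exactly the hypothesis on $\Re \mathcal{K}(H)$. For the remaining non-null boundary directions, a substitution $s_j = u_j \prod_i x_i^{e_f(H_i, G_j)}$ converts the remaining integral into a Mellin-type problem whose asymptotic behavior at $s_j = 0$ is controlled by a meromorphic Mellin transform, producing the expansion of $f_* u$ at $G_j$.

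The principal obstacle is the combinatorics of the output index family and the generation of logarithmic terms. When several non-null boundary hypersurfaces $H_{i_1}, \ldots, H_{i_r}$ of $X$ all map with positive exponent into the same face $G$ of $Y$, the contributions from the expansions at each $H_{i_\ell}$ collide in the Mellin picture; equal exponents superimpose into higher-order poles, which in turn become higher powers of $\log y_G$ in the expansion of $f_* u$ at $G$. Bookkeeping these collisions is exactly the content of the extended-union operation in the definition of $f_\# \mathcal{K}$: each pole at a given exponent records a specific log power, and summing pole contributions from the different $H_{i_\ell}$ reproduces the extended-union rule term by term. Carrying out this residue calculation at every face and corner of $Y$, and patching via the partition of unity, gives the claimed polyhomogeneity with index family $f_\# \mathcal{K}$.
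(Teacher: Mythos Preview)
The paper does not give its own proof of this lemma: it is stated as a quotation of Melrose's pushforward theorem, with a reference to \cite{melrose:1992} and \cite{melrose:1993}, and no argument is provided in the paper itself. Your sketch is essentially the standard proof one finds in those references --- localize, put the b-fibration in monomial normal form, and read off the asymptotics of the pushforward via the Mellin transform, with the extended union arising from pole collisions --- so there is nothing to compare against in the paper beyond the citation.
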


Here $\mathcal{J}= f_{\#}\mathcal{K}$ is the pushforward of the index
family $\mathcal{K}$. In particular, for a boundary hypersurface $H$
of $Y$, 
\begin{equation*}
  \mathcal{J}(H) = \extcup _{G\in M_{1}(X), e_{f}(G,H)\neq 0}\left\{ (z/e_{f}(G,H), p): (z,p)\in \mathcal{K}(G)\right\},
\end{equation*}
where $\extcup$ is the extended union defined by $(z,p)\in K\extcup J$
if and only if $(z,p)\in K\cup J$ or $p = p' + p'' +1$ with $(z,p')\in
K$ and $(z,p'')\in J$.

We recall the definition of the space of conormal distributions (see,
for example, \cite{hormander:vol3} or \cite{melrose:1992}). If $M$ is
a manifold with corners and $Z$ is an interior $p$-submanifold, then
$I^{p}(M;Z)$ is the space of conormal distributions of order $p$
associated to $Z$. Here we only need the case when $Z$ has codimension
one and meets the boundary hypersurfaces transversely. In local
coordinates $(x,y)$ where $M$ is given by $x_{i}\geq 0$ for $i =
1,\ldots ,k$ and $Z$ is given by $y_{1} = 0$, then $u\in I^{p}(M;Z)$
has the form
\begin{equation*}
  \int_{\reals} e^{i y_{1}\eta} a(x,y,\eta) \deta ,
\end{equation*}
with $a$ a half-density valued classical symbol of order $p+n/4$ and
$n = \dim M$ (note that we are using the order convention consistent
with the orders of Lagrangian distributions in
\cite{melrose-uhlmann:1979}). Because $x$ behaves as a parameter, the
boundaries cause no problems here.

We also need several refinements of this notion. The first is when we
allow the symbol $a$ to have an asymptotic expansion (in $x$) near the
boundary of $M$. We need the space $\phgalt{\E}\CDf{p}{M}{Z}$ of
conormal distributions associated to $Z$ with polyhomogeneous
expansion at the boundary of $M$. These are merely conormal
distributions in the sense above, but where we allow the symbol $a$ to
have a polyhomogeneous expansion in the $x$ variables.

The second generalization we require is to allow the symbol to have a
mild type of singularity at $\eta = 0$. In particular, we allow
symbols of the form $\sum_{j} (\eta + i0)^{r-j}a_{j}$. The singularity
at $0$ only affects the growth of the distribution as $y\to
\pm\infty$, and so we may avoid complications by multiplying our
distribution by a cutoff function supported near $y=0$. We need this
generalization later in order to prove a lemma about the support of
our distributions.

\section{The {H}amilton vector fields}
\label{sec:hamilt-vect-fields}

By analogy with the definition of the Hamilton vector field, we recall
from \cite{joshi-sabarreto:2001} the definition of the \emph{$0$-Hamilton
vector field}. 
\begin{defn}
  \label{def:HVF-0-HVF}
  Given $p\in C^{\infty}(T^{*}X)$, the $0$-Hamilton vector field of
  $p$, denoted $\Hzero _{p}$, is defined by
  \begin{equation*}
    \wzero (\cdot, \Hzero _{p}) = dp .
  \end{equation*}
\end{defn}

In local coordinates where $\azero$ is given by
(\ref{eq:1-form-single-space-Tzero}), $\Hzero _{p}$ is given by
\begin{equation*}
  \Hzero _{p} = x\frac{\pd p}{\pd \tau} \frac{\pd}{\pd x} + x\frac{\pd
    p}{\pd \eta}\cdot \frac{\pd}{\pd y} - \left( x\frac{\pd p}{\pd x}
    + \frac{\pd p}{\pd \eta}\cdot \eta \right) \frac{\pd}{\pd \tau} -
  \left( x\frac{\pd p}{\pd y} - \frac{\pd p}{\pd \tau} \eta\right)
  \cdot \frac{\pd }{\pd \eta}. 
\end{equation*}

We are interested in the operator $P(\lambda)$, whose principal symbol
is the length function \begin{equation*}
  p = -x^{2}\xi ^{2} + x^{2}(h_{0}(y,\mu) + xh_{1}(x,y,\mu)) = -\tau
  ^{2} + h_{0}(y,\eta) + xh_{1}(x,y,\eta), 
\end{equation*}
where we have written $h(x,y,\mu) = h_{0}(y,\mu) + xh_{1}(x,y,\mu)$.
This is also the principal symbol of $P$ when $P$ is treated as an
operator acting on the left factor of the product space $X\times X$.

In particular, in the coordinates given by
(\ref{eq:0G-cotangent-coordinates}), $p$ pulls back to
\begin{equation*}
  \tilde{p} = -s^{2}\sigma ^{2} + s^{2}h_{0}(\yt + \xt z, \zeta) +
  s^{3}\xt h_{1}(\xt s, \yt + \xt z, \zeta). 
\end{equation*}

Thus if $\wdbl$ is the lift of the symplectic form
(\ref{eq:0G-symplectic-form-on-X2}) (as in
\cite{joshi-sabarreto:2001}) to $T^{*}\dblzero$, $\Hzero _{p}$ lifts
to $H_{\tilde{p}}$, which is given by $\wdbl (\cdot,
H_{\tilde{p}})=d\tilde{p}$. In these coordinates, we may write
\begin{align*}
  H_{\tilde{p}} = &\frac{\pd \tilde{p}}{\pd \sigma} \frac{\pd}{\pd s}
  - \frac{\pd \tilde{p}}{\pd s}\frac{\pd}{\pd \sigma} + \frac{\pd
    \tilde{p}}{\pd \zeta}\cdot \frac{\pd}{\pd z} - \frac{\pd
    \tilde{p}}{\pd z}\cdot \frac{\pd}{\pd \zeta} \\ 
  &\quad +\frac{\pd \tilde{p}}{\pd \xi}\frac{\pd}{\pd\xt} - \frac{\pd
    \tilde{p}}{\pd \xt}\frac{\pd}{\pd\xi} + \frac{\pd\tilde{p}}{\pd
    \mu}\cdot \frac{\pd}{\pd y} - \frac{\pd\tilde{p}}{\pd y}\cdot
  \frac{\pd}{\pd\mu} \notag  
\end{align*}
An elementary computation then yields
\begin{align*}
  H_{\tilde{p}}= &-2s^{2}\sigma \frac{\pd}{\pd s} - s\left(-2\sigma
    ^{2}  + 2 h_{0} + 3s\xt h_{1} + s^{2}\xt ^{2}\frac{\pd h_{1}}{\pd
      x} \right) \frac{\pd}{\pd \sigma}  \\ 
  &\quad+ s^{2}\left( \frac{\pd h_{0}}{\pd \eta} + s\xt \frac{\pd
      h_{1}}{\pd \eta}\right) \cdot \frac{\pd}{\pd z} - s^{2}\left(\xt
    \frac{\pd h_{0}}{\pd y}  + s\xt ^{2} \frac{\pd h_{1}}{\pd y}
  \right) \cdot \frac{\pd}{\pd \zeta} \\ 
  &\quad - s^{2}\left(  z \cdot \frac{\pd h_{0} }{\pd y} + sh_{1} +
    s^{2}\xt \frac{\pd h_{1}}{\pd x} + s\xt z \cdot \frac{\pd
      h_{1}}{\pd y}\right) \frac{\pd}{\pd \xi} - s^{2}\left( \frac{\pd
      h_{0}}{\pd y} + s\xt \frac{pd h_{1}}{\pd y}\right) \cdot
  \frac{\pd}{\pd \mu}, 
\end{align*}
where $h_{0}$ and $h_{1}$ are evaluated at $(\xt s, \yt + \xt z,
\zeta)$.

Note that the characteristic set of $P$ on $\dblzero$ is the set where
$\tilde{p} = 0$, i.e., the set where $s^{2}\sigma ^{2} = s^{2}(h(\yt +
\xt z, \zeta) + s\xt h_{1}(s\xt , \yt + \xt z, \zeta))$. In this set,
$H_{\tilde{p}}$ has the form 
\begin{align}
  \label{eq:HVF-tilde-p-in-char-set}
  H_{\tilde{p}} = &s^{2}\bigg( -2\sigma \frac{\pd}{\pd s} - \left(\xt
    h_{1} + s\xt ^{2}\frac{\pd h_{1}}{\pd x} \right) \frac{\pd}{\pd
    \sigma}  + \left( \frac{\pd h_{0}}{\pd \eta} + s\xt \frac{\pd
      h_{1}}{\pd \eta}\right) \cdot \frac{\pd}{\pd z} \notag\\ 
  &\quad - \left(\xt \frac{\pd h_{0}}{\pd y}  + s\xt ^{2} \frac{\pd
      h_{1}}{\pd y} \right) \cdot \frac{\pd}{\pd \zeta} - \left(
    \frac{\pd h_{0}}{\pd y} + s\xt \frac{pd h_{1}}{\pd y}\right) \cdot
  \frac{\pd}{\pd \mu}\notag\\ 
  &\quad - \left(  z \cdot \frac{\pd h_{0} }{\pd y} + sh_{1} +
    s^{2}\xt \frac{\pd h_{1}}{\pd x} + s\xt z \cdot \frac{\pd
      h_{1}}{\pd y}\right) \frac{\pd}{\pd \xi} \bigg), 
\end{align}

In particular, we have proved the following proposition: 
\begin{prop}
  \label{prop:HVF-factors-in-char-set}
  Inside the characteristic set of $P$ on $\dblzero$, $H_{L}=
  H_{\tilde{p}} = s^{2}\tilde{H}_{L}$, where $\tilde{H}_{L}$ is a
  smooth vector field tangent to the front face that is nondegenerate
  at $s=0$. Similarly, in coordinates $(x, y, \tilde s = s^{-1},
  \tilde{z} = s^{-1}z)$, the Hamilton vector field for the lift from
  the right factor can be written $H_{R} = \tilde{s}^{2}
  \tilde{H}_{R}$, where $\tilde{H}_{R}$ is nondegenerate at $\tilde{s}
  = 0$. 
\end{prop}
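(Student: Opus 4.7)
The plan is to read the proposition off the explicit formula for $H_{\tilde p}$ just derived in equation~\eqref{eq:HVF-tilde-p-in-char-set}. Inside the characteristic set, the defining relation $s^{2}\sigma^{2} = s^{2}(h_{0} + s\xt h_{1})$ gives $\sigma^{2} = h_{0} + s\xt h_{1}$ away from $s=0$, and I would substitute this into the $\partial/\partial\sigma$ coefficient of $H_{\tilde p}$ to rewrite $-2\sigma^{2} + 2h_{0}$ as $-2s\xt h_{1}$. This upgrades the apparent $s^{1}$ factor of that coefficient to $s^{2}$, and since every other coefficient of $H_{\tilde p}$ already carries an explicit $s^{2}$, a common factor of $s^{2}$ may be extracted from the whole expression. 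The resulting smooth vector field on $\dblzero$ is $\tilde H_{L}$, and the identity $H_{\tilde p} = s^{2}\tilde H_{L}$ extends by continuity to $s=0$ (where both sides vanish).

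Next I would verify tangency of $\tilde H_{L}$ to the front face $\frontface = \{\xt = 0\}$. This is immediate from the form of $\tilde p$: as a function on $T^{*}\dblzero$, the symbol $\tilde p$ depends on $s, \sigma, \xt, \yt, z, \zeta$ but not on $\xi$, so $\partial\tilde p/\partial\xi \equiv 0$. Hence the $\partial/\partial\xt$ component of $H_{\tilde p}$ vanishes identically, and so does the corresponding component of $\tilde H_{L}$.

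For nondegeneracy at $s=0$, I would evaluate $\tilde H_{L}$ at $s=0$ and read off the $\partial/\partial s$ coefficient as $-2\sigma$ and the $\partial/\partial z$ coefficient as $\partial h_{0}/\partial\eta$ evaluated at $(\yt,\zeta)$. In the characteristic set at $s=0$ the equation reduces to $\sigma^{2} = h_{0}(\yt,\zeta)$, and $h_{0}$ being a Riemannian dual metric on $Y$ means that away from the zero section either $\zeta \neq 0$ (which forces $\partial h_{0}/\partial\eta \neq 0$) or $\zeta = 0$ (which forces $\sigma \neq 0$). In either case $\tilde H_{L}$ does not vanish at $s=0$ above a nonzero covector.

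Finally, for the right factor I would repeat the argument in the dual projective chart $(x, y, \st, \zt)$ on $\dblzero$ with $\st = \xt/x$ and $\zt = (y-\yt)/x$, which covers a neighborhood of the right face $\rightface$ where $\st = 0$. Because $\dblzero$ is constructed symmetrically in the two factors of $X \times X$, the principal symbol lifted from the right factor has exactly the same form as $\tilde p$ after interchanging the two factors, so the preceding three-step analysis applies verbatim with $(\st, \zt, x, y)$ now playing the role of $(s, z, \xt, \yt)$. This yields $H_{R} = \st^{2}\tilde H_{R}$ with $\tilde H_{R}$ nondegenerate at $\st = 0$. The only real obstacle throughout is keeping straight which variables serve as base and which as fiber under the symmetry; no substantively new computation is required.
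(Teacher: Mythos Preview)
Your approach is essentially the paper's: the proposition is read off directly from the explicit expression for $H_{\tilde p}$ in the characteristic set (equation~\eqref{eq:HVF-tilde-p-in-char-set}), where the common $s^{2}$ factor is already manifest. You supply additional detail the paper omits---the tangency check via $\partial\tilde p/\partial\xi\equiv 0$ and the nondegeneracy argument using positive-definiteness of $h_{0}$---and these are correct; one small slip is that at $s=0$ the relation is $\sigma^{2}=h_{0}(\yt+\xt z,\zeta)$ rather than $h_{0}(\yt,\zeta)$, but since your argument only uses that $h_{0}$ is a Riemannian dual metric this does not matter.
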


The fact that we may factor a power of $s^{2}$ from the Hamilton
vector field is useful in the next section.

\section{The {L}agrangians}
\label{sec:lagrangians}

The two Lagrangians that interest us are the lift of the diagonal and
the flowout from the characteristic set of $P$ within this conormal
bundle by the Hamilton vector field $H_{L}$.

The coordinates on phase space given by
(\ref{eq:0G-cotangent-coordinates}) imply that the conormal bundle of
the diagonal, 
\begin{equation*}
  \{ x=\xt, y=\yt, \tau = -\tilde{\tau} , \eta = -\tilde{\eta}\},
\end{equation*}
 lifts to
\begin{equation*}
  \Lambda_{0} = \left\{ s = 1, z = 0, \xi = 0, \mu = 0\right\}.
\end{equation*}
The Lagrangian submanifold $\Lambda_{0}$ intersects
$T^{*}_{\frontface}\dblzero$ transversely at
\begin{equation*}
  \Lambda_{0}^{0} = \left\{ s=1,z=0,\xi=0,\mu=0,\xt =0\right\} =
  T^{*}_{\diag_{0} \cap \frontface}\frontface , 
\end{equation*}
and so $\Lambda_{0}$ is extendible.

We now set $\Lambda_{1}$ to be the forward flowout of $H_{\tilde{p}}$
from $\Lambda_{0} \cap \charset (P)$. Because we may write
$H_{\tilde{p}} = s^{2}\tilde{H}_{\tilde{p}}$, with
$\tilde{H}_{\tilde{p}}$ a smooth nondegenerate vector field tangent to
$\ffp$, $\Lambda_{1}$ is a smooth submanifold of $T^{*}\dblzero$,
intersecting $T^{*}_{\ffp}\dblzero$ transversely. $\Lambda_{1}$ is
Lagrangian by general theory (see, for example,
\cite{dimassi-sjostrand:1999}). Thus $\Lambda_{1}$ is an extendible
Lagrangian near $\ffp$.

To see that $\Lambda_{1}$ is extendible near $\ffm$, we require the
following proposition: 
\begin{prop}
  \label{prop:Lag-invariance}
  The Lagrangian $\Lambda_{0}$ is invariant under the flow of $H_{L} -
  H_{R}$. Moreover, $\Lambda_{1}$ is also the flowout by $H_{R}$ of
  $\Lambda_{0} \cap \charset (P)$. 
\end{prop}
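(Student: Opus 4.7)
The plan is to verify both statements first on the interior of $\dblzero$, where the blow-down map $\beta$ identifies $T^{*}\dblzero$ with $T^{*}(X\times X)$ and the Hamilton vector fields are the standard ones, and then to observe that the resulting identity of closed submanifolds extends to all of $T^{*}\dblzero$.

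For the tangency of $H_{L} - H_{R}$ to $\Lambda_{0}$, the key input is that the principal symbol $p = -\tau^{2} + h(x,y,\eta)$ is even (indeed, quadratic) in the momentum variables $(\tau,\eta)$. In the natural coordinates $(x,y,\xt,\yt,\tau,\eta,\tilde\tau,\tilde\eta)$ on the interior of $T^{*}(X\times X)$, the Lagrangian $\Lambda_{0}$ is the conormal bundle of the diagonal, cut out by $x=\xt$, $y=\yt$, $\tau+\tilde\tau=0$, $\eta+\tilde\eta=0$, with tangent space spanned by $\partial_{x}+\partial_{\xt}$, $\partial_{y}+\partial_{\yt}$, $\partial_{\tau}-\partial_{\tilde\tau}$, and $\partial_{\eta}-\partial_{\tilde\eta}$. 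The evenness of $p$ in $(\tau,\eta)$ forces $\partial_{\tau}p|_{L}= -\partial_{\tilde\tau}p|_{R}$ and $\partial_{\eta}p|_{L}= -\partial_{\tilde\eta}p|_{R}$ on $\Lambda_{0}$, while $\partial_{x}p|_{L} = \partial_{\xt}p|_{R}$ and $\partial_{y}p|_{L} = \partial_{\yt}p|_{R}$. Substituting these into the expressions for $H_{L}$ and $H_{R}$ shows that $H_{L}-H_{R}$ restricted to $\Lambda_{0}$ is a combination of exactly the four tangent vector fields just listed, so $\Lambda_{0}$ is invariant under the $(H_{L}-H_{R})$-flow.

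For the second assertion, I first observe that $H_{L}p_{R}=0=H_{R}p_{L}$ because $p_{L}$ and $p_{R}$ depend on disjoint sets of variables, and $H_{L}p_{L}=0=H_{R}p_{R}$ tautologically. Combined with the tangency established above and the fact that $p_{L}=p_{R}$ on $\Lambda_{0}$ (so $\Lambda_{0}\cap\charset(P_{L}) = \Lambda_{0}\cap\charset(P_{R}) = \Lambda_{0}\cap\charset(P)$), this shows $H_{L}-H_{R}$ is tangent to $\Lambda_{0}\cap\charset(P)$. Write $\Lambda_{1}^{L}$ and $\Lambda_{1}^{R}$ for the flowouts by $H_{L}$ and $H_{R}$, respectively. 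At an initial point $p_{0}\in\Lambda_{0}\cap\charset(P)$,
\begin{equation*}
  H_{R}(p_{0}) = H_{L}(p_{0}) - (H_{L}-H_{R})(p_{0}),
\end{equation*}
where the second term lies in $T_{p_{0}}(\Lambda_{0}\cap\charset(P)) \subset T_{p_{0}}\Lambda_{1}^{L}$ and the first lies in the flow direction, also in $T_{p_{0}}\Lambda_{1}^{L}$. Since $H_{L}$ and $H_{R}$ act on disjoint factors of $X\times X$ they commute, and pushing this identity forward by the $H_{L}$-flow shows $H_{R}$ is tangent to $\Lambda_{1}^{L}$ at every point. Hence the $H_{R}$-flow starting in $\Lambda_{0}\cap\charset(P) \subset \Lambda_{1}^{L}$ stays in $\Lambda_{1}^{L}$, giving $\Lambda_{1}^{R}\subset\Lambda_{1}^{L}$; the symmetric argument yields the reverse inclusion.

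The main obstacle is checking that these interior identities extend cleanly to the boundary faces of $\dblzero$, but Proposition~\ref{prop:HVF-factors-in-char-set} handles this: after extracting the common factors $s^{2}$ and $\tilde s^{2}$, the rescaled vector fields $\tilde H_{L}$ and $\tilde H_{R}$ are smooth and nondegenerate at the respective front faces, so both flowouts are smooth closed submanifolds of $T^{*}\dblzero$ that agree on an open dense subset and therefore everywhere.
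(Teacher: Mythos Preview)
Your proof is correct and follows essentially the same strategy as the paper: both arguments rest on the tangency of $H_{L}-H_{R}$ to $\Lambda_{0}\cap\charset(P)$ together with the fact that $H_{L}$ and $H_{R}$ commute (being lifts of vector fields acting on separate factors). The paper simply asserts the tangency as a ``straightforward calculation'' in the blown-up coordinates $(s,z,\xt,\yt,\sigma,\zeta,\xi,\mu)$, whereas you supply the conceptual reason---evenness of the principal symbol in the fiber variables---in the product coordinates on the interior; and where the paper concludes by noting that $H_{R}$ is tangent to $\Lambda_{1}$ but transverse to $\Lambda_{0}$, you phrase the same conclusion as a two-sided inclusion $\Lambda_{1}^{R}\subset\Lambda_{1}^{L}\subset\Lambda_{1}^{R}$. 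These are presentational differences only.
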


\begin{proof}
  A straightforward calculation shows that $H_{L} - H_{R}$ preserves
  $\Lambda_{0}$. Indeed, though this is a nonzero vector field, the
  coefficients of $\pd[s]$, $\pd[z]$, $\pd[\xi]$, and $\pd[\mu]$ all
  vanish at $\Lambda_{0}$. Note further that $\charset (P_{L}) \cap
  \Lambda_{0} = \charset (P_{R})\cap \Lambda_{0}$, and so $H_{L} -
  H_{R}$ preserves this set.

  In order to see that $\Lambda_{1}$ is also the flowout of $H_{R}$,
  we observe that $H_{L}$ and $H_{R}$ must commute because they are
  lifts (from $X\times X$) of commuting vector fields. In particular,
  because $H_{R}-H_{L}$ is tangent to $\Lambda_{0}\cap \charset
  (P_{L})$, it is also tangent to $\Lambda_{1}$. In particular,
  $H_{R}$ is also tangent to $\Lambda_{1}$. $H_{R}$ is not tangent to
  $\Lambda_{0}$, so its flowout must also be $\Lambda_{1}$.
\end{proof}

An argument similar to the one above then shows that $\Lambda_{1}$ is
extendible near $\ffm$ (with $H_{L}$ replaced by $H_{R}$). (For what
we use below, the argument used above suffices, as we only need that
$\Lambda_{1}$ is extendible in a neighborhood of the diagonal.)

The projection of $H_{L}$ in $\charset (P)$ to $T^{*}\ffp$ is given by
\begin{align*}
  H_{L} = &s^{2}\left( -2\sigma \frac{\pd}{\pd s} + 2\frac{\pd h}{\pd
      \zeta}\cdot\frac{\pd}{\pd z} - \frac{\pd h}{\pd y} \cdot
    \frac{\pd}{\pd\mu} \right)  , 
\end{align*}
so that $\Lambda_{1}^{0}$ is given by the flowout of this vector field
from $\Lambda_{0}^{0}$ in $T^{*}\ffp$.

Note that the assumption (A3) implies that the projection of
$\Lambda_{1}$ to the base $\dblzero$ is a smooth submanifold. We call
this submanifold $\lightcone = \pi \Lambda_{1}$. This is a smooth
codimension one submanifold. Note further that $\lightcone$ is the
boundary of an open subset of $\dblzero$, which we call $\lightcone
^{int}$.

\section{Paired {L}agrangian distributions}
\label{sec:pair-lagr-distr}

We are interested in a calculus of paired Lagrangian distributions
adapted to the $0$-geometry. These are distributions with a model form
\begin{align}
  \label{eq:PL-model-form}
  u &= \int _{0}^{\infty} \int _{\reals^{n}} e^{i (x - \xt -
    t\xt)\frac{\xi}{\xt} + i (y - \yt)\cdot
    \frac{\mu}{\xt}}a\left(t,\frac{x}{\xt},
    \frac{y-\yt}{\xt},\xt,\yt,\xi,\mu\right)\dt\dxi\dmu \\ 
  &= \int _{0}^{\infty}\int _{\reals^{n}} e^{i(s - 1 - t)\xi + i
    Y\cdot \mu}a(t,s,Y,\xt,\yt,\xi,\mu)\dt\dxi\dmu , \notag 
\end{align}
where $a$ is a symbol.

We briefly recall the definition of a paired Lagrangian distribution
from \cite{melrose-uhlmann:1979}. Suppose $M$ is a manifold and
$L_{0},L_{1}$ are conic Lagrangian submanifolds of $T^{*}M$
intersecting cleanly in codimension $1$. We say that $u\in
\PLflong{m}{M}{L_{0}}{L_{1}}{\hd (M)}$ if it is a distributional
$\frac{1}{2}$-density modeled microlocally on 
\begin{equation*}
  \int _{0}^{\infty}\int _{\reals^{n}}e^{i(x_{1}-s)\xi_{1} + i x' \cdot \xi '}a(s,x,\xi)\dxi \ds ,
\end{equation*}
where $a$ is a symbol of order $m + \frac{1}{2}-\frac{n}{4}$. In the
general case, we require the phase function to parametrize the
Lagrangian pair $(L_{0},L_{1})$ and $a$ to be a symbol of order $m +
\frac{1}{2} + \frac{n-2N}{4}$, where $N$ is the number of variables
required to parametrize $L_{1}$. In this model, the phase function
parametrizes $L_{0}= N^{*}\{ 0\}$, while $L_{1} = \{ x_{1}\geq 0, x' =
0, \xi _{1}= 0\}$.

We now recall what it means to parametrize a cleanly intersecting
Lagrangian pair. Suppose that $L_{0},L_{1}\subset T^{*}M$ are two
closed conic Lagrangian submanifolds intersecting cleanly in
codimension $1$ and that $\pd L_{1} = L_{1}\cap L_{0}$. Suppose $\phi:
M_{x} \times \reals^{N}_{\xi}\setminus \{ 0\}\to \reals$ is
homogeneous of degree $1$ in $\xi$, and let 
\begin{equation*}
  C = \{ (x,\xi)~:~x\in M, \xi \in \reals^{N}\setminus\{0\}, d_{\xi}\phi (x,\xi) = 0\}.
\end{equation*}
We say $\phi$ is nondegenerate if $d\frac{\pd\phi}{\pd \xi_{j}}$,
$j=1,\ldots,N$, are linearly independent at any point in $C$. We say
that $\phi$ parametrizes a single conic Lagrangian submanifold $L$ if
$L$ is locally the image of the map $C\to T^{*}M$ given by $(x,\xi)
\mapsto (x,d_{x}\phi (x,\xi))$. A simple example is the case when $L =
N^{*}\{ 0\}$, in which case we may take $\phi (x,\xi) = x\cdot \xi$.

In the case of an intersecting Lagrangian pair, we say that $\phi$ is
a nondegenerate parametrization of $(L_{0},L_{1})$ near $q\in \pd
L_{1}$ if 
\begin{equation*}
  \phi (x,s,\xi) = \phi_{0}(x,\xi) + s\phi _{1}(x,s,\xi), \quad \xi\in \reals^{n},
\end{equation*}
where $q = (x_{0}, d_{x}\phi (x,0,\xi))$, $d_{(s,\xi)}\phi (x,0,\xi) =
0$, $\phi_{0}$ is a nondegenerate parametrization of $L_{0}$ near $q$,
and $\phi$ parametrizes $L_{1}$ near $q$ for $s>0$ with $ds$,
$d\frac{\pd\phi}{\pd \xi_{j}}$, and $d\frac{\pd\phi}{\pd s}$ all
linearly independent at $q$. In particular, the phase function
$(1-s-t)\xi + Y\cdot \mu$ parametrizes $(\Lambda_{0},\Lambda_{1})$
near the front face of $\dblzero$.

\begin{defn}
  \label{defn:PL-basic-distributions}
  We say that $u\in \PLlong$ is a \emph{paired Lagrangian distribution} of
  order $m$ associated to $\Lambda_{0}$ and $\Lambda_{1}$ on
  $\dblzero$ if it may be written as the restriction to $\dblzero$ of
  some distribution in
  $\PLflong{m}{2\dblzero}{\Lambda_{0}^{e}}{\Lambda_{1}^{e}}{\hd
    (2\dblzero)}$. 
\end{defn}

\begin{defn}
  \label{defn:PL-support-away-from-sf}
  The class $\PLzerolong$ consists of the paired Lagrangian
  distributions that are supported away from the side faces of
  $\dblzero$.

  We similarly define the classes
  $\LDflong[0]{m}{\dblzero}{\Lambda_{0}}{\hd (\dblzero)}$ and
  $\LDflong[0]{m}{\dblzero}{\Lambda_{1}}{\hd (\dblzero)}$ as
  restrictions of distributions on $2\dblzero$ that are supported away
  from the side faces. 
\end{defn}

We typically shorten $\PLlong$ and
$\LDflong[0]{m}{\dblzero}{\Lambda}{\hd (\dblzero)}$ by suppressing the
half-density factor.

The following proposition follows immediately from Proposition 4.1 of
\cite{melrose-uhlmann:1979}. 
\begin{prop}
  \label{prop:PL-LDs-are-PLDs}
  If $u\in \PLzero$, then $\WF (u) \subset \Lambda_{0}\cup
  \Lambda_{1}$. If $B$ is a properly supported $b$-pseudodifferential
  operator of order $0$ on $\dblzero$, then
  \begin{align*}
    WF'(B)\cap \Lambda_{0} = \emptyset \implies Bu \in
    \LDf[0]{m}{\dblzero}{\Lambda_{1}\setminus \pd \Lambda_{1}}{\hd
      (\dblzero)} , \ \text{and}\\ 
    WF'(B)\cap \Lambda_{1} = \emptyset \implies Bu \in
    \LDf[0]{m-1/2}{\dblzero}{\Lambda_{0}\setminus \pd\Lambda_{1}}{\hd
      (\dblzero)}. 
  \end{align*}
\end{prop}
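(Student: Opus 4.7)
The plan is to reduce this proposition directly to Proposition 4.1 of Melrose--Uhlmann~\cite{melrose-uhlmann:1979} by exploiting the definition of $\PLzero$ as the restriction to $\dblzero$ of a paired Lagrangian distribution on the doubled space $2\dblzero$.

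First I would write $u = \tilde{u}|_{\dblzero}$, where $\tilde{u}\in \PLflong{m}{2\dblzero}{\Lambda_{0}^{e}}{\Lambda_{1}^{e}}{\hd (2\dblzero)}$ is supported away from the side faces. The wavefront set statement is then immediate: Proposition 4.1 of \cite{melrose-uhlmann:1979} gives $\WF(\tilde u) \subset \Lambda_{0}^{e} \cup \Lambda_{1}^{e}$, and restricting to $\dblzero$ yields $\WF(u)\subset \Lambda_{0}\cup \Lambda_{1}$ since $\Lambda_{j}^{e}\cap T^{*}\dblzero = \Lambda_{j}$ by the definition of an extendible Lagrangian.

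For the action of $B$, I would use that $\supp(u)$ is a compact set bounded away from the side faces of $\dblzero$; because $B$ is properly supported, $\supp(Bu)$ is also bounded away from the side faces. Away from the side faces, a $b$-pseudodifferential operator of order $0$ acts exactly as a standard pseudodifferential operator of order $0$, with the same operator wavefront relation. I would thus choose a properly supported classical pseudodifferential extension $\tilde B$ of $B$ to $2\dblzero$ whose action on $\tilde u$ restricts to $Bu$ and whose wavefront relation agrees with $\WF'(B)$ near $\supp(\tilde u)$. Proposition 4.1 of \cite{melrose-uhlmann:1979} applied to $\tilde B \tilde u$ then yields each of the two implications---the first with Lagrangian order $m$ on $\Lambda_{1}^{e}$, the second with order shifted by $-1/2$ on $\Lambda_{0}^{e}$ owing to the paired-Lagrangian order convention---and restricting to $\dblzero$ completes the proof.

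The hard part will not be substantive but organizational: one must verify that the operator wavefront relation $\WF'(B)$ for the $b$-pseudodifferential calculus coincides, near $\supp(u)$, with the ordinary operator wavefront relation of the extension $\tilde B$. Since $u$ is supported at positive distance from the side faces and $B$ is properly supported, this reduces to a routine cut-off argument using the fact that $b$-pseudodifferential operators restrict to standard pseudodifferential operators in the interior of $\dblzero$.
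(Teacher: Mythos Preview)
Your approach is correct and is exactly what the paper does: the paper simply states that the proposition ``follows immediately from Proposition~4.1 of \cite{melrose-uhlmann:1979}'' and gives no further argument, so your reduction via the doubled space $2\dblzero$ and an extension of $B$ is precisely the intended proof, just with the details spelled out.
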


Note that the orders on $\Lambda_{0}$ and $\Lambda_{1}$ differ by
$\frac{1}{2}$. This can be seen by integrating by parts once in $s$.

Just as in \cite{melrose-uhlmann:1979}, this gives us two symbol maps.
For $u\in \PLzero$, 
\begin{align*}
  \sigma ^{(1)}(u) &\in C^{\infty}_{m+n/4}(\Lambda_{1}\setminus
  \pd\Lambda_{1}; \hd\otimes L_{1}) \\ 
  \sigma ^{(0)}(u) &\in C^{\infty}_{m-1/2 + n/4}(\Lambda_{0}\setminus
  \pd\Lambda_{1}; \hd \otimes L_{0}), 
\end{align*}
where the subscript indicates the degree of homogeneity and $L_{i}$ is
the Maslov bundle over $\Lambda_{i}$. Admissible symbols are subject
to a compatibility condition at $\pd \Lambda_{1}$ as in
\cite{melrose-uhlmann:1979}, but we do not need the explicit form of
this condition here. We do, however, use the fact that if the principal symbol
of $u$ vanishes then $u$ is one order better.

For these classes to be well-defined, we must show that these classes
are independent of the choice of extensions $\Lambda_{0}^{e}$ and
$\Lambda_{1}^{e}$. (It was already shown in
\cite{joshi-sabarreto:2001} that the classes
$\LDf[0]{m}{\dblzero}{\Lambda}{\hd (\dblzero)}$ are independent of the
choice of extension.)

\begin{lem}
  \label{lem:PL-classes-independent-of-extension}
  The class $\PLzero$ is independent of the choice of extensions
  $\Lambda_{0}^{e}$ and $\Lambda_{1}^{e}$. 
\end{lem}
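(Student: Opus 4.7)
The plan is to mimic the strategy Joshi and S\'a Barreto used for the single-Lagrangian case (noted parenthetically just before the lemma), and combine it with a microlocal decomposition near the intersection $\pd\Lambda_{1}=\Lambda_{0}\cap\Lambda_{1}$. Suppose $u\in \PLzero$ arises as $\tilde{u}|_{\dblzero}$ with $\tilde{u}\in \PLflong{m}{2\dblzero}{\Lambda_{0}^{e}}{\Lambda_{1}^{e}}{\hd(2\dblzero)}$, and let $(\tilde{\Lambda}_{0}^{e},\tilde{\Lambda}_{1}^{e})$ be any other pair of extensions. The goal is to produce $\tilde{v}\in \PLflong{m}{2\dblzero}{\tilde{\Lambda}_{0}^{e}}{\tilde{\Lambda}_{1}^{e}}{\hd(2\dblzero)}$ with $\tilde{v}|_{\dblzero}=u$; symmetry then finishes the proof.

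First I would use a microlocal partition of unity on $2\dblzero$ to split $\tilde{u}=\tilde{u}_{0}+\tilde{u}_{1}+\tilde{u}_{\cap}$, with $\WF'(\tilde{u}_{0})$ disjoint from $\Lambda_{1}^{e}$, $\WF'(\tilde{u}_{1})$ disjoint from $\Lambda_{0}^{e}$, and $\WF'(\tilde{u}_{\cap})$ contained in a small conic neighborhood of $\Lambda_{0}^{e}\cap\Lambda_{1}^{e}$. By Proposition~\ref{prop:PL-LDs-are-PLDs}, the restrictions $\tilde{u}_{0}|_{\dblzero}$ and $\tilde{u}_{1}|_{\dblzero}$ lie in the single-Lagrangian classes $\LDf[0]{m-1/2}{\dblzero}{\Lambda_{0}\setminus\pd\Lambda_{1}}{\hd(\dblzero)}$ and $\LDf[0]{m}{\dblzero}{\Lambda_{1}\setminus\pd\Lambda_{1}}{\hd(\dblzero)}$, respectively. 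Since these single-Lagrangian classes are already known to be independent of extension \cite{joshi-sabarreto:2001}, each piece lifts to a distribution on $2\dblzero$ associated to $\tilde{\Lambda}_{0}^{e}$ or $\tilde{\Lambda}_{1}^{e}$, yielding candidates $\tilde{v}_{0}$ and $\tilde{v}_{1}$.

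The core of the argument is the near-intersection piece $\tilde{u}_{\cap}$. Working microlocally near a point of $\pd\Lambda_{1}\cap T^{*}_{\frontface}\dblzero$, the pair $(\Lambda_{0}^{e},\Lambda_{1}^{e})$ admits a nondegenerate parametrization $\phi(z,s,\xi)=\phi_{0}(z,\xi)+s\phi_{1}(z,s,\xi)$, and $\tilde{u}_{\cap}$ has the model form
\begin{equation*}
\int_{0}^{\infty}\int e^{i\phi(z,s,\xi)}a(z,s,\xi)\dxi\ds.
\end{equation*}
Because the two extension pairs agree on $T^{*}\dblzero$, the restriction $\phi|_{\dblzero}$ parametrizes $(\Lambda_{0},\Lambda_{1})$ and the restricted symbol is smooth down to the front face. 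To construct $\tilde{v}_{\cap}$ I would select any nondegenerate parametrization $\tilde\phi$ of $(\tilde{\Lambda}_{0}^{e},\tilde{\Lambda}_{1}^{e})$ whose restriction to $\dblzero$ is equivalent to $\phi|_{\dblzero}$ (such a $\tilde\phi$ exists because both extensions are extendible and the pairs agree on $T^{*}\dblzero$), transport the symbol to the new parametrization by the standard equivalence-of-phase-functions procedure for paired Lagrangian distributions \cite{melrose-uhlmann:1979}, and extend the resulting symbol arbitrarily smoothly across the front face. Setting $\tilde{v}=\tilde{v}_{0}+\tilde{v}_{1}+\tilde{v}_{\cap}$ gives the required distribution.

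The main obstacle is the near-intersection step: verifying that any two nondegenerate parametrizations of extendible Lagrangian pairs that agree on $T^{*}\dblzero$ give rise to the same class of restrictions to $\dblzero$. This requires adapting the phase-function equivalence arguments of \cite{melrose-uhlmann:1979} to the boundary setting, carefully tracking the symbol transformation formula (including its Maslov and half-density factors, and the compatibility condition at $\pd\Lambda_{1}$) so that the equivalence uses only data on $T^{*}\dblzero$ and so that the lower-order errors introduced by the change of parametrization can be absorbed into the single-Lagrangian pieces handled in the previous step.
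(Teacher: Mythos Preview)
Your approach is plausible but takes a harder route than the paper, and the step you flag as the ``main obstacle'' is exactly the one the paper's argument sidesteps entirely.

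The paper does not decompose microlocally or invoke phase-function equivalence near $\pd\Lambda_{1}$. Instead it uses a symbolic iteration. Given $u$ arising from one extension pair, one chooses $v$ associated to the other extension pair whose principal symbols on $\Lambda_{0}$ and $\Lambda_{1}$ (which depend only on $\Lambda_{0},\Lambda_{1}$, not on the extensions) agree with those of $u$. Then $u-v$ restricts to $\dblzero$ as a paired Lagrangian distribution of order $m-1$. Iterating and asymptotically summing produces $w$ with $u-w$ of order $-\infty$ on $\Lambda_{0}\cup\Lambda_{1}$, i.e., smooth on $\dblzero$ down to the front face and supported away from the side faces; such a function is trivially the restriction of an element of the class for either extension pair.

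Compared to your plan, this buys a lot: there is no need to adapt the Melrose--Uhlmann phase-equivalence machinery to the manifold-with-boundary setting near $\pd\Lambda_{1}\cap T^{*}_{\frontface}\dblzero$, no tracking of Maslov factors or compatibility conditions, and no separate treatment of the near-intersection piece. Your decomposition reduces two of the three pieces to the Joshi--S\'a Barreto single-Lagrangian result, but the third piece $\tilde{u}_{\cap}$ carries essentially the full difficulty, and you have only outlined what would need to be checked rather than checked it. The symbol-matching argument handles the intersection uniformly with the rest and is both shorter and self-contained.
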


\begin{proof}
  Suppose that $\Lambda_{0}^{e}$ and $\tilde{\Lambda}_{0}^{e}$ both
  extend $\Lambda_{0}$, while $\Lambda_{1}^{e}$ and
  $\tilde{\Lambda}_{1}^{e}$ extend $\Lambda_{1}$. Let $u \in
  \PLf[0]{m}{2\dblzero}{\tilde{\Lambda}_{0}^{e}}{\tilde{\Lambda}_{1}^{e}}{\hd
    (2\dblzero)}$. We may find $v\in
  \PLf[0]{m}{2\dblzero}{\Lambda_{0}^{e}}{\Lambda_{1}^{e}}{\hd
    (2\dblzero)}$ with the same symbol on $\Lambda_{0}\cup
  \Lambda_{1}$.

  Because $u$ and $v$ have the same symbol there, $u-v$ is order
  $m-1$ on $\Lambda_{0}\cup \Lambda_{1}$. We now iteratively solve
  away principal symbols to find $w$ with $u-w$ of order $-\infty$ on
  $\Lambda_{0}\cup \Lambda_{1}$, i.e., such that $u-w$ is a smooth
  function on $\dblzero$ up to the front face, but supported away from
  the side faces. $u-w$ is then clearly the restriction of an element
  of
  $\PLf[0]{-\infty}{2\dblzero}{\Lambda_{0}^{e}}{\Lambda_{1}^{e}}{\hd
    (2\dblzero)}$, proving the claim.
\end{proof}

The following proposition follows easily from Proposition 5.4 of
\cite{melrose-uhlmann:1979}: 
\begin{prop}
  \label{prop:PL-behavior-under-characteristic-ops}
  Suppose $A$ is a properly supported b-differential operator of order
  $m$ on $\dblzero$ and is such that $\sigma (A)$ vanishes on
  $\Lambda_{1}$. If $u\in \xt^{p}\PLzero[k]$, then $Au =
  \xt^{p}h+\xt^{p}g$, with $h\in \LDf[0]{m + k -
    1/2}{\dblzero}{\Lambda_{0}}{\hd (\dblzero)}$ and $g\in
  \PLzero[k+m-1]$. 
\end{prop}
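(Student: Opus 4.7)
The plan is to deduce this from the classical paired Lagrangian calculus of Melrose-Uhlmann \cite[Prop.~5.4]{melrose-uhlmann:1979} applied on the doubled manifold $2\dblzero$.

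First I would remove the weight by conjugation. Since $A$ is a b-differential operator and $\xt$ is a defining function for $\rfp$, the conjugate $\xt^{-p}A\xt^{p}$ is again a b-differential operator of order $m$ with the same principal symbol (conjugation shifts $\xt\pd[\xt]$ by the constant $p$ and commutes with the remaining b-vector fields). It therefore suffices to prove the statement for $u\in \PLzero[k]$ and then multiply back by $\xt^{p}$.

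Second, by Definition \ref{defn:PL-basic-distributions} one writes $u = \tilde u|_{\dblzero}$ for some $\tilde u \in \PLflong[0]{k}{2\dblzero}{\Lambda_{0}^{e}}{\Lambda_{1}^{e}}{\hd(2\dblzero)}$ supported away from the side faces, and extends $A$ smoothly across the front face to a b-differential operator $\tilde A$ of order $m$ on $2\dblzero$ with $\sigma(\tilde A)$ vanishing on $\Lambda_{1}^{e}$ in a neighborhood of $\Lambda_{1}$. Applying Proposition 5.4 of \cite{melrose-uhlmann:1979} to $\tilde A\tilde u$ yields
\begin{equation*}
  \tilde A \tilde u = \tilde h + \tilde g,
\end{equation*}
with $\tilde h \in \LDflong[0]{m+k-1/2}{2\dblzero}{\Lambda_{0}^{e}}{\hd(2\dblzero)}$ and $\tilde g \in \PLflong[0]{k+m-1}{2\dblzero}{\Lambda_{0}^{e}}{\Lambda_{1}^{e}}{\hd(2\dblzero)}$. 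Restricting back to $\dblzero$ produces $Au = h + g$ in the claimed classes; properness of $A$ guarantees that the support remains away from the side faces. The orders match because vanishing of $\sigma(\tilde A)$ on $\Lambda_{1}^{e}$ drops the $\Lambda_{1}$-order by one (from $m+k$ to $m+k-1$, supplied by $\tilde g$), while on $\Lambda_{0}$ the order remains $m+k-1/2$, which is exactly $1/2$ higher than the $\Lambda_{0}$-order of a paired Lagrangian distribution of order $k+m-1$; this half-order gap is filled by the pure conormal term $\tilde h$.

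The main obstacle is arranging $\sigma(\tilde A)|_{\Lambda_{1}^{e}}\equiv 0$ near $\Lambda_{1}$ from the weaker hypothesis $\sigma(A)|_{\Lambda_{1}}=0$. The argument is microlocal: working near $\Lambda_{1}$, the restriction $\sigma(\tilde A)|_{\Lambda_{1}^{e}}$ is a smooth function vanishing on the codimension-one submanifold $\Lambda_{1}\subset \Lambda_{1}^{e}$ cut out by the front-face defining function, so by Taylor's theorem it factors as that defining function times a smooth remainder, which can be removed by modifying the full symbol of $\tilde A$ outside $T^{*}\dblzero$. By Lemma \ref{lem:PL-classes-independent-of-extension}, the resulting classes on $\dblzero$ do not depend on the choice of extension, so the freedom in constructing $\tilde A$ and $\tilde u$ costs nothing.
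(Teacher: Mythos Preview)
Your argument is correct and follows essentially the same route as the paper: extend to $2\dblzero$, invoke Proposition~5.4 of \cite{melrose-uhlmann:1979}, and restrict back. The only cosmetic difference is that the paper handles the weight via the commutator identity $[A,\xt^{p}]=\xt^{p}B$ with $B\in\Diff[m-1]_{b}$ (absorbing $Bu$ into $g$), which is equivalent to your conjugation step; you are also more explicit than the paper about arranging $\sigma(\tilde A)|_{\Lambda_{1}^{e}}=0$, which the paper takes for granted.
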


\begin{proof}
  We start by noting that the action of $\Diff[m]_{b}(\dblzero)$
  (extended to $2\dblzero$) commutes with restriction to $\dblzero$.
  This follows from the observation that $x\pd[x] \chi_{\{x > 0\}} =
  0$, where $\chi_{\{x >0\}}$ is the characteristic function of a
  half-plane.
  
  We start by writing $u = \xt^{p}\tilde{u}|_{\dblzero}$, where
  $\tilde{u}\in
  \PLf[0]{k}{2\dblzero}{\Lambda_{0}^{e}}{\Lambda_{1}^{e}}{\hd
    (2\dblzero)}$. We apply Proposition 5.4 of
  \cite{melrose-uhlmann:1979} to $\tilde{u}$, giving us $A\tilde{u} =
  \tilde{h}+\tilde{g}$.

  In particular, we have that $Au = \xt^{p}h + \xt^{p}g +
  [A,\xt^{p}]u$. The operator $A$ is a b-differential operator, so we know that
  $[A,\xt^{p}] = \xt ^{p}B$, where $B$ is a b-differential operator of
  order $m-1$. We thus have that $Au = \xt^{p}h+\xt^{p}(g + Bu)$,
  where $h$ and $g$ are the restrictions of $\tilde{h}$ and
  $\tilde{g}$ to $\dblzero$. $g,Bu \in\PLzero[k+m-1]$, proving the
  claim. 
\end{proof}

We also need another statement from Proposition 5.4 of
\cite{melrose-uhlmann:1979}, namely that the principal symbol of the
distribution $g$ above is given by the action of the Hamilton vector
field of the principal symbol of $A$ on the principal symbol of $u$.
We only require this on the interior of the double space and so we do
not prove it here.

\begin{defn}
  \label{def:PL-normal-op}
  We define the \emph{normal operator} of a paired Lagrangian distribution
  (or a Lagrangian distribution, as in \cite{joshi-sabarreto:2001}) to
  be the restriction of its Schwartz kernel to the front face
  $\frontface$. 
\end{defn}

Note that this restriction is well-defined by wavefront considerations and
the transversality of $\Lambda_{0}$ and $\Lambda_{1}$ to $\frontface$.
Moreover, by considering the model form (\ref{eq:PL-model-form}), we
find that the restriction is a paired Lagrangian distribution in the
class $\PLf{m}{\frontface}{\Lambda_{0}^{0}}{\Lambda_{1}^{0}}{\hd
  (\dblzero)}$.

Just as in \cite{mazzeo-melrose:1987}, we have a short exact sequence.
\begin{prop}
  \label{prop:PL-normal-op-SES}
  The normal operator induces exact sequences
  \begin{equation}
    \label{eq:LD-normal-op-SES}
    0 \to \xt \LDf[0]{m}{\dblzero}{\Lambda_{0}}{\hd(\dblzero)} \to
    \LDf[0]{m}{\dblzero}{\Lambda_{0}}{\hd (\dblzero)} \to
    \LDf[0]{m}{\frontface}{\Lambda_{0}^{0}}{\hd (\dblzero)} \to 0 
  \end{equation}
  and 
  \begin{equation}
    \label{eq:PL-normal-op-SES}
    0 \to \xt\PLzero[m] \to \PLzero[m] \to
    \PLf[0]{m}{\frontface}{\Lambda_{0}^{0}}{\Lambda_{1}^{0}}{\hd
      (\dblzero)}\to 0 
  \end{equation}
  such that for any differential operator $P\in \Diff[m]_{0}(X)$ and
  any paired Lagrangian distribution $u$,
  \begin{equation}
    \label{eq:PL-normal-op-distributes}
    N_{p}(Pu) = N_{p}(P)N_{p}(u).
  \end{equation}
\end{prop}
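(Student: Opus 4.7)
The plan is to reduce all three claims to statements about classical symbols via the local model form \eqref{eq:PL-model-form} near the front face, using the transversality of $\Lambda_{0}$ and $\Lambda_{1}$ to $\frontface$ established in Section~\ref{sec:lagrangians}.

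First I would verify that the restriction map $N_{p}$ is well-defined on both classes. Since elements of $\PLzero[m]$ and $\LDf[0]{m}{\dblzero}{\Lambda_{0}}{\hd (\dblzero)}$ have wavefront set contained in $\Lambda_{0}\cup \Lambda_{1}$ by Proposition~\ref{prop:PL-LDs-are-PLDs}, and these Lagrangians are extendible in the sense that they meet $T^{*}_{\frontface}\dblzero$ transversely, the restriction to $\frontface$ exists as a distribution with wavefront set in $\Lambda_{0}^{0}\cup \Lambda_{1}^{0}\subset T^{*}\frontface$. The target classes are then well-defined because $\Lambda_{i}^{0}$ are Lagrangian in $T^{*}\frontface$ by Lemma~\ref{lem:0G-extendible-Lagrangian-restriction}, and they intersect cleanly in codimension one since this inherits from the transverse intersection of the pair $(\Lambda_{0},\Lambda_{1})$ with $T^{*}_{\frontface}\dblzero$.

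For exactness, injectivity of $\xt \cdot$ is automatic, and $N_{p}\circ (\xt \cdot) = 0$ since any factor of $\xt$ vanishes on $\frontface$. If $u\in \PLzero[m]$ has $N_{p}(u)=0$, I would write $u$ locally as in \eqref{eq:PL-model-form} with amplitude $a(t,s,Y,\xt,\yt,\xi,\mu)$; the hypothesis forces $a|_{\xt=0}$ to be a symbol of order $-\infty$, and Taylor's theorem in $\xt$ then gives $a = \xt \tilde a$ with $\tilde a$ a symbol of the same order, exhibiting $u$ as an element of $\xt \PLzero[m]$ modulo a smooth error. For surjectivity I take a distribution in the target class on $\frontface$, write it locally as in \eqref{eq:PL-model-form} with $\xt$-independent amplitude $a_{0}(t,s,Y,\yt,\xi,\mu)$, and regard $a_{0}$ as an amplitude on $\dblzero$ (trivially constant in $\xt$); plugging this into \eqref{eq:PL-model-form} produces a preimage in $\PLzero[m]$. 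Patching the local constructions with a partition of unity on $\frontface$ completes the argument. The proof of \eqref{eq:LD-normal-op-SES} is strictly simpler since $\Lambda_{1}$ does not appear.

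For the compatibility \eqref{eq:PL-normal-op-distributes}, recall that $N_{p}(P)$ is by definition the restriction of the lift of $P$ (acting on the left factor) to the fiber of $\frontface$ over $p$ via setting $\xt = 0$. Since this lift is a $0$-differential operator on $\dblzero$ that is tangent to $\frontface$, its action commutes with restriction to $\frontface$ (a fact already used in the proof of Proposition~\ref{prop:PL-behavior-under-characteristic-ops}), yielding $N_{p}(Pu) = N_{p}(P) N_{p}(u)$. The step I expect to be the subtlest is surjectivity in the paired case, because the principal symbols of a paired Lagrangian distribution on $\Lambda_{0}$ and $\Lambda_{1}$ are linked by the Melrose--Uhlmann compatibility condition at $\pd \Lambda_{1}$, and the extension from $\frontface$ must respect it. Because this condition is intrinsic to the Lagrangian pair and $\xt$ enters only parametrically, a constant-in-$\xt$ extension preserves it, so the issue is one of careful bookkeeping rather than a genuinely new difficulty.
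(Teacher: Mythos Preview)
Your proposal is correct and follows essentially the same approach as the paper: exactness is reduced to Taylor expansion of the amplitude in $\xt$ near $\frontface$, and the compatibility \eqref{eq:PL-normal-op-distributes} follows because the lifted $0$-differential operator is tangent to $\frontface$ and hence commutes with restriction (the paper simply cites \cite{mazzeo-melrose:1987} for this). Your additional remarks on well-definedness of the restriction and on the Melrose--Uhlmann compatibility condition at $\pd\Lambda_{1}$ are more explicit than the paper's terse sketch, but do not represent a different route.
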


\begin{proof}
  The exactness of the sequences (\ref{eq:LD-normal-op-SES}) and
  (\ref{eq:PL-normal-op-SES}) follows from Taylor expansion in $\xt$
  near $\frontface$.

  The proof of (\ref{eq:PL-normal-op-distributes}) is identical to the
  one in \cite{mazzeo-melrose:1987}. 
\end{proof}

Also note that because $\lightcone$ is an embedded submanifold,
elements of $\LDf[0]{k-1/2}{\dblzero}{\Lambda_{1}}{\hd(\dblzero)}$ may
be identified with distributions of order $k-1/2$ conormal to
$\lightcone$.

\section{Another blow-up}
\label{sec:another-blow-up}

In order to understand the solutions of the transport equations, we
must introduce another blow-up. Because our solutions eventually have
differing asymptotic behaviors along the light cone and on the
interior of the light cone, we blow up the boundary of the light cone
(i.e., at the left and right faces). This blow-up always makes sense
locally near the front face, but to make sense of it globally we use
assumption (A3).

Because the boundary of the light cone meets the corner $\lfp \cap
\rfm$, the order in which we blow up the two submanifolds $\lightcone
\cap \lfp$ and $\lightcone \cap \rfm$ matters. We deal with this
situation by blowing up the submanifold $\lightcone \cap \lfp \cap
\rfm$ of the corner. We include this discussion primarily for
completeness, as this piece of the construction is unnecessary as long
as we restrict to forward-directed data. Indeed, the product of the
pullback of a forward-directed function and any tempered distribution
on $\dblspace$ will vanish to all orders at this new face.

We first define what we refer to as the intermediate double space.
This is the space on which we solve the transport equations for the
conormal singularity. If we restrict to data supported away from
$Y_{-}$, this space suffices for our entire construction.

\begin{defn}
  \label{defn:ABU-new-double-space}
  We define the intermediate double space $\dblspacet = \left[
    \dblzero , \lightcone \cap \rightface , \lightcone \cap \leftface
  \right]$. 
\end{defn}

This is a new manifold with corners. We will call $\lcfp$ the lift of
$\lightcone \cap \leftface$ and $\lcfm$ the lift of $\lightcone \cap
\rightface$.  

Though we may think of this new manifold as an invariant way of
introducing polar coordinates near $\lightcone\cap \leftface$,
projective coordinates are more convenient for our applications. Near
$\lcfp$ and $\ffp$ but away from $\lfp$, we may use coordinates $\rho
/ s$, $s$, and $\xt$. Similarly, near $\lcfm$ and $\ffm$ and away from
$\rfm$, we may use $\rho / \tilde{s}$, $\tilde{s}$, and $x$ (and the
remaining $\tilde{z}$ and $y$ variables), where $\tilde{s} = \xt / x$.

Away from the front face and the corner $\lfp\cap\rfm$, this is just
the blow-up of an intersection of two hypersurfaces $\{ \rho = 0\}$
and $\{x=0\}$ (or $\{ \xt = 0\}$). Near $\lcfm$ but away from $\lcfp$
and $\ffp$, $\rho / \xt$ and $\xt$ are valid coordinates, while near
$\lcfp$ away from $\lcfm$ and $\ffm$, $\rho / x$ and $x$ are valid
coordinates. Near their intersection but away from $\lfp$ and $\rfm$,
$\rho / (\xt x)$, $\xt$, and $x$ are valid.

Because $\lcfp$ and $\lcfm$ intersect, the order in which we performed
the blow-up matters. Near the interiors of the new faces the two
spaces are locally diffeomorphic, but they are not globally
diffeomorphic. This is relevant when understanding the behavior of our
parametrix near intersections of these faces, so we instead work on
what we call the \emph{full double space}. In this space we first blow up
$\lightcone\cap \lfp\cap\rfm$ and then perform the other two blow-ups.
Performing this first blow-up separates the lifts of $\lightcone\cap
\lfp$ and $\lightcone\cap \rfm$, so the order in which we perform the
other two blow-ups is now irrelevant.

\begin{defn}
  \label{defin:ABU-full-double-space}
  The full double space is $\dblspace = \left[ \dblzero, \lightcone
    \cap \lfp\cap \rfm, \lightcone \cap \lfp , \lightcone \cap
    \rfm\right]$. 
\end{defn}

This is another new manifold with corners and is the space on which
the final parametrix lives. We call the additional corner $\xf$, for
scattering face. We give it this name because this face is related to
Vasy's construction of the scattering operator in \cite{vasy:2007}.
Near the interior of $\xf \cap \lcfp$, we may use coordinates given by
$\rho / (x\xt)$, $x/xt$, and $\xt$, while near the interior of $\xf
\cap \lcfm$, we may use $\rho / (x\xt)$, $\xt / x$, and $x$. We again
emphasize that this blow-up is unnecessary if we restrict to data
supported away from $Y_{-}$.  Figure~\ref{fig:newdblspace} depicts a neighborhood of
$\ffp$ in $\dblspace$ (or, indeed, in $\dblspacet$).

\begin{figure}[ht]
  \centering
  \begin{tikzpicture}
    \draw [<-] (1,3) to (2,3);
    \draw [->] (4,3) to (5,3) node [anchor=west] (y){$y,\tilde{y}$};
    \draw (2,3) .. controls (3,3.5) .. (4,3) coordinate [pos=0.5,anchor=center] (top)
    coordinate [pos=0.25, anchor=center](leftlc)
    coordinate [pos=0.65, anchor=center](rightlc)
    coordinate [pos=0.2, anchor=center](llbuc)
    coordinate [pos=0.3, anchor=center](lrbuc)
    coordinate [pos=0.6, anchor=center](rlbuc)
    coordinate [pos=0.7, anchor=center](rrbuc);
    \draw (4,3) .. controls (3,2.7) .. (2,3) coordinate [pos=0.5] (bottom);
    \draw [->] (top) to (3, 4.5) node [anchor=west](xt){$\tilde{x}$};
    \draw [->] (bottom) to (4 , 2) node [anchor=west](x){$x$};
    \draw [draw=white,double=black,very thick] (3,3) to (5,4) node [anchor=west](diag){$\Delta _{0}$};
    \draw (3,3) to (leftlc);
    \draw (3,3) to (rightlc);
    \filldraw[draw = black,fill=white]{(llbuc) .. controls +(0.15,-0.05) .. (lrbuc) .. controls +(-0.25,0.65) .. ([shift={(-0.25, 1)}]lrbuc) .. controls +(-0.075,-0.1) .. ([shift={(-0.25, 1)}]llbuc) .. controls +(0,-0.35) .. (llbuc)};
    \filldraw[draw=black,fill=white]{(rrbuc) .. controls +(-0.15,-0.05) .. (rlbuc) .. controls +(0.25, 0.65) .. ([shift={(0.25, 1)}]rlbuc) .. controls +(0.075, -0.1) .. ([shift={(0.25,1)}]rrbuc) .. controls +(0,-0.35) .. (rrbuc)};
  \end{tikzpicture}
  \caption{The double space $\dblspace$ near $\ffp$.}
  \label{fig:newdblspace}
\end{figure}

Up to a smooth nonvanishing factor, the Jacobian determinant of the
blow-down map $\dblspacet \to \dblzero$ is given by
$r_{\lcfp}r_{\lcfm}$, so sections of the bundle $\hd(\dblzero)$ lift
to sections of $(r_{\lcfp}r_{\lcfm})^{1/2}\hd(\dblspacet)$. Similarly,
sections of $\hd (\dblzero)$ lift to sections of
$r_{\xf}r_{\lcfp}^{1/2}r_{\lcfm}^{1/2}\hd(\dblspace)$.

One can see that the fundamental solution does not lift nicely to
$\dblzero$ by considering the forward fundamental solution for the
Klein-Gordon equation on the half-space $\reals^{n}_{+} =
(0,\infty)_{s}\times \reals^{n-1}_{z}$ equipped with the Lorentzian
metric $\frac{-\ds^{2}+\dz^{2}}{s^{2}}$. Finding the forward
fundamental solution requires solving the equation 
\begin{align*}
  P(\lambda)u &= \delta (s-1)\delta(z), \\
  u &\equiv 0 \quad \text{ for }s>1 .
\end{align*}
This solution can be constructed explicitly by taking the Fourier
transform in the $z$ variables, which transforms the equation into a
Bessel ordinary differential equation for each value of the dual
variable to $z$. By applying a stationary phase argument to the
inverse Fourier transform of this family of Bessel functions, one can
see that the asymptotic behavior of the forward fundamental solution
is qualitatively different from its behavior on the interior of the
light cone, justifying the blow-up.

\section{Outline of the construction}
\label{sec:outline-construction}

We present now an outline of the parametrix construction, which
consists of five steps. At each step we construct an approximation of
the fundamental solution that captures the ``worst'' remaining
singularity and yields an error term that is less singular.

We simultaneously construct a distribution that is both a left and a
right parametrix for our operator. In other words, we construct a
distribution $K$ such that $P(\lambda)K = I + R_{1}$ and
$P(\lambda)^{t}_{R}K = I + R_{2}$. Here the subscript R denotes the
operator acting on the right variables and $P(\lambda)^{t}$ is the
transpose of $P(\lambda)$ as in equation~(\ref{eq:op-transpose}). The
distributions $R_{1}$ and $R_{2}$ are smoothing operators that vanish
to infinite order at relevant boundary hypersurfaces and should be
considered ``negligible''.

First we show that the construction in \cite{melrose-uhlmann:1979} is
valid in a neighborhood of the diagonal, smooth down to the front face
of $\dblzero$. The Schwartz kernel of the identity is a distribution
conormal to the lift of the diagonal to $\dblzero$, and so the
fundamental solution of $P(\lambda)$ is a paired Lagrangian
distribution associated to the lift of the diagonal and its flowout
via the Hamilton vector field, uniformly down to the front face. This
is the most singular piece of the distribution but is the most
standard portion of the construction. After cutting off this
distribution outside a neighborhood of the lifted diagonal, this step
yields a remainder term that is Lagrangian on the interior, uniformly
down to the front face, and supported away from the side faces. Our
assumption (A3) (which is unnecessary if we are only considering the
construction in a neighborhood of $Y_{+}$) guarantees that the
Lagrangian remainder term is in fact a conormal distribution
associated to the projection of the flowout of the light cone.

In the second step of the construction, we solve away the conormal
error term to obtain a remainder that is smooth on the interior of the
double space. Because the light cone is characteristic for
$P(\lambda)$, this reduces to solving a sequence of transport ordinary
differential equations. These may be solved on the interior, but
solving each successive transport equation on $\dblzero$ causes a loss
of an order of decay at the boundary. In other words, with each
successive improvement of regularity at the interior, the behavior of
the solution at the boundary becomes worse.

We instead solve the transport equations on the intermediate double space
$\dblspacet$. We solve these equations on $\dblspacet$ rather than the
full double space $\dblspace$ because transport equations have a
slightly simpler form here. Indeed, the main modification needed to
handle de Sitter space is to solve the equations on $\dblspace$ rather
than $\dblspacet$. This step then yields a remainder term that is
smooth on the interior and polyhomogeneous at the light cone faces
$\lcfp$ and $\lcfm$ and the scattering face $\xf$.

The third step solves away part of the polyhomogeneous remainder at
the light cone faces. We write down a formal series expansion for the
solution and solve term by term, which yields a sequence of ordinary
differential equations. The numbers $s_{\pm} (\lambda)$ are the
indicial roots of these equations near the side faces of $\dblspace$,
so $x^{s_{\pm}(\lambda)}$ solve the equations to first order. In other
words, the fundamental solution has an expansion at the side faces
starting with $x^{s_{\pm}(\lambda)}$. The remainder term from this
step vanishes to all orders at the light cone face and has an
expansion at the side faces.

In the fourth step we solve away the remaining error term at the front
face and side faces. This reduces to another formal power series
calculation, and was carried out in \cite{vasy:2007}. The distribution
we obtain by adding the distributions found in the four steps then
solves the left (right) equation up to an error term that is smooth
and vanishing to all orders at the ``plus'' (``minus'') boundary
faces.

In the fifth and final step, we remove this last error term via an argument showing that
the fundamental solution itself must be in our class of distributions.

\section{The diagonal singularity}
\label{sec:diagonal-singularity}

The aim of this section is to solve away the diagonal singularity of
the fundamental solution, leaving us with a Lagrangian error. The
method here is similar to the one found in
\cite{joshi-sabarreto:2001}. We use the normal operator to solve away
the paired Lagrangian singularity at the front face up to $O(\xt
^{\infty})$, which allows us to invoke the construction in
\cite{melrose-uhlmann:1979}.

\begin{prop}
  \label{prop:PL-construction}
  Given $f\in \xt^{p}\LDf[0]{k}{\dblzero}{\Lambda_{0}}{\hd
    (\dblzero)}$, there is a paired Lagrangian distribution $u\in
  \xt^{p}\PLzero[-3/2 + k]$ such that
  \begin{equation*}
    P(\lambda )u - f\in \xt^{p}\LDf[0]{-1/2 +k}{\dblzero}{\Lambda_{1}}{\hd(\dblzero)}.
  \end{equation*}
  In particular, if $f$ is the Schwartz kernel of the identity
  operator, then
  \begin{equation*}
    f = \delta (s-1)\delta(z) \xt ^{-n/2}\mu \in \xt
    ^{-n/2}\LDf[0]{0}{\dblzero}{\Lambda_{0}}{\hd (\dblzero)}, 
  \end{equation*}
  and there is a paired Lagrangian distribution $u\in \xt
  ^{-n/2}\PLzero[-3/2]$ such that $P(\lambda)u - f \in
  \xt^{-n/2}\LDf[0]{-1/2}{\dblzero}{\Lambda_{1}}{\hd(\dblzero)}$.
\end{prop}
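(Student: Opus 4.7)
The plan is to adapt the Melrose--Uhlmann parametrix construction from \cite{melrose-uhlmann:1979} to the $0$-double space via the extension-and-restriction procedure of Definition~\ref{defn:PL-basic-distributions}, ensuring uniformity down to the front face. Since $P(\lambda)$ commutes with multiplication by $\xt$, I would first reduce to the case $p=0$ and multiply by $\xt^{p}$ at the end. The claim about the Schwartz kernel of the identity is then the case $k=0$, $p=-n/2$, and follows immediately once the general statement is established.

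The inductive step is symbolic cancellation on $\Lambda_{0}$. The principal symbol of $P(\lambda)$ pulled back to $\dblzero$ is the dual metric function $\tilde{p}$ of Section~\ref{sec:hamilt-vect-fields}, which is non-vanishing on all of $\Lambda_{0}\setminus \Lambda_{1}$. Given $f\in \LDf[0]{k}{\dblzero}{\Lambda_{0}}{\hd(\dblzero)}$, I would prescribe $\sigma^{(0)}(u_{0}) = \sigma(f)/\tilde{p}$ on $\Lambda_{0}\setminus \pd\Lambda_{1}$ and determine $\sigma^{(1)}(u_{0})$ on $\Lambda_{1}$ by transport along the Hamilton flow of $\tilde{p}$, with initial data at $\pd\Lambda_{1}$ fixed by the Melrose--Uhlmann admissibility condition. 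Extending $\Lambda_{0}$ and $\Lambda_{1}$ to $2\dblzero$ and quantizing this symbol pair, then restricting, produces $u_{0}\in \PLzero[-3/2+k]$. Proposition~\ref{prop:PL-behavior-under-characteristic-ops} applied to $P(\lambda)$ (of order $2$, with symbol vanishing on $\Lambda_{1}$) then gives $P(\lambda)u_{0}=h_{0}+g_{0}$ with $h_{0}\in \LDf[0]{k}{\dblzero}{\Lambda_{0}}{\hd(\dblzero)}$ and $g_{0}\in \PLzero[k-1/2]$; the choice of symbol arranges $h_{0}-f\in \LDf[0]{k-1}{\dblzero}{\Lambda_{0}}{\hd(\dblzero)}$.

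Iterating against the improved $\Lambda_{0}$-error at each stage yields $u_{j}\in \PLzero[-3/2+k-j]$, and asymptotic summation via Borel's lemma produces $u\sim \sum u_{j}\in \PLzero[-3/2+k]$ for which $P(\lambda)u-f$ has vanishing symbol on $\Lambda_{0}$ to all orders. The accumulated $\Lambda_{1}$-error has leading order $k-1/2$, and the $\Lambda_{0}$-smoothness combined with Proposition~\ref{prop:PL-LDs-are-PLDs} places the total remainder in $\LDf[0]{-1/2+k}{\dblzero}{\Lambda_{1}}{\hd(\dblzero)}$, as required.

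The main obstacle is the singular behavior at $\pd\Lambda_{1}=\Lambda_{0}\cap \Lambda_{1}$, where $\tilde{p}$ vanishes and the naive formula $\sigma(f)/\tilde{p}$ fails pointwise; this is precisely the scenario the paired Lagrangian framework is designed to handle, and I would invoke the symbol compatibility condition of \cite{melrose-uhlmann:1979} directly. Uniformity down to $\frontface$ follows from the extendibility of $\Lambda_{0}$ and $\Lambda_{1}$ established in Section~\ref{sec:lagrangians}, together with Lemma~\ref{lem:PL-classes-independent-of-extension}, since the Melrose--Uhlmann calculus on $2\dblzero$ restricts to $\dblzero$ with a class of distributions independent of the chosen extensions.
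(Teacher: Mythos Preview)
Your approach is correct but takes a genuinely different route from the paper. The paper follows the pattern of \cite{joshi-sabarreto:2001}: it first invokes Proposition~6.6 of \cite{melrose-uhlmann:1979} for the \emph{normal operator} $N_{p}(P(\lambda))$ on each fiber of the front face, lifts the result via the short exact sequence~\eqref{eq:PL-normal-op-SES}, and then iterates in powers of $\xt$ to push the $\Lambda_{0}$-error into $\xt^{\infty}\LDf[0]{k}{\dblzero}{\Lambda_{0}}{\hd(\dblzero)}$. Only at the very end, once the remaining $\Lambda_{0}$-error vanishes to infinite order at $\frontface$ and hence extends by zero to $2\dblzero$, does the paper apply Melrose--Uhlmann on the doubled space. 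Your argument instead performs the Melrose--Uhlmann symbol iteration directly on $2\dblzero$ from the outset, iterating in symbol order rather than in $\xt$, and relies on the fact that $P(\lambda)$ lifts to a b-differential operator that extends across $\frontface$ and commutes with restriction (as noted in the proof of Proposition~\ref{prop:PL-behavior-under-characteristic-ops}). This is more economical and avoids the separate front-face step; the paper's version has the advantage of making the normal operator explicit, which is the organizing principle of the $0$-calculus and foreshadows its role in the later sections.

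One point deserves care: your final step asserts that a paired Lagrangian distribution whose $\Lambda_{0}$-symbol vanishes to all orders lies in $\LDf[0]{k-1/2}{\dblzero}{\Lambda_{1}}{\hd(\dblzero)}$, citing Proposition~\ref{prop:PL-LDs-are-PLDs}. That proposition is a microlocal statement away from $\pd\Lambda_{1}$ and does not by itself control the behavior at the intersection. The conclusion is nonetheless true, but it requires inspecting the model oscillatory integral near $\pd\Lambda_{1}$---this is exactly what is packaged inside \cite[Proposition~6.6]{melrose-uhlmann:1979}, so you would do better to cite that result directly on $2\dblzero$ rather than re-derive its iteration and then appeal to Proposition~\ref{prop:PL-LDs-are-PLDs} for the last step.
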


\begin{proof}
  On the interior of the manifold we may use the construction in
  \cite{melrose-uhlmann:1979}, so we localize near the front face.
  Because we are only considering distributions supported away from
  the side faces for now, we do not need separate arguments for
  dealing with $\ffp$ and $\ffm$.

  We start by fixing a cutoff function $\chi \in C^{\infty}(\dblzero)$
  such that $\chi \equiv 1$ in a neighborhood of the diagonal, but
  supported away from the side faces of $\dblzero$. We also note now
  that $P(\lambda)$ commutes with multiplication by $\xt$.

  Consider now the normal operator $N_{p}(P(\lambda ))$ acting on the
  fiber $\frontface_{p}$ over the front face. In the projective
  coordinates $(s , z, \xt, \yt)$, this is given by
  equation~\eqref{eq:normal-op-of-P}.
 
  It is easy to verify that $N_{p}(P(\lambda ))$ satisfies the
  assumptions required to apply Proposition 6.6 of
  \cite{melrose-uhlmann:1979}. We may thus find $\kappa\in
  \PLf{k-3/2}{\frontface}{\Lambda_{0}^{0}}{\Lambda_{1}^{0}}{\hd
    (\dblzero)}$ such that $N_{p}(P(\lambda ))\kappa(p) -
  N_{p}(\xt^{-p}f)\in C^{\infty}(\frontface)$. Multiplying $\kappa$ by
  the cutoff function $\chi$ gives \[
  N_{p}(P(\lambda ))(\chi\kappa(p)) - N_{p}(x^{-r}f) \in \LDf[0]{k-1/2}{\frontface}{\Lambda_{1}^{0}}{\hd(\dblzero)}.
  \]

  By appealing to the short exact sequence
  (\ref{eq:PL-normal-op-SES}), we may find some paired Lagrangian
  distribution $u_{0}\in \xt^{p}\PLzero[k-3/2]$ such that
  $N(\xt^{-p}u_{0}) = \xt^{-p}\chi\kappa$. $P(\lambda )$ is
  characteristic on $\Lambda_{1}$, so we may appeal to Propositions
  \ref{prop:PL-behavior-under-characteristic-ops} and
  \ref{prop:PL-normal-op-SES} to write
  \begin{equation*}
    P(\lambda )u_{0} - f = v_{0} + w_{0},
  \end{equation*}
  where $v_{0}\in \xt^{p+1}
  \LDf[0]{k}{\dblzero}{\Lambda_{0}}{\hd(\dblzero)}$ and $w_{0}\in
  \xt^{p}\LDf[0]{k-1/2}{\dblzero}{\Lambda_{1}}{\hd(\dblzero)}$.

  The distribution $v_{0}$ is supported away from the side face, so
  $\xt^{-1}v_{0}\in
  \xt^{p}\LDf[0]{k}{\dblzero}{\Lambda_{0}}{\hd(\dblzero)}$. We may now
  again use equation (\ref{eq:LD-normal-op-SES}), Proposition 6.6 of
  \cite{melrose-uhlmann:1979}, and equation
  (\ref{eq:PL-normal-op-SES}) to find $u_{1}\in \xt^{p}\PLzero[k-3/2]$
  such that
  \begin{equation*}
    N(P(\lambda ))N(\xt^{-p}u_{1}) = N(\chi\xt^{-1}v_{0}).
  \end{equation*}

  In particular, we may then again appeal to Propositions
  \ref{prop:PL-behavior-under-characteristic-ops} and
  \ref{prop:PL-normal-op-SES} to write
  \begin{equation*}
    P(\lambda )(\chi(u_{0} + \xt u_{1})) = v_{1} + w_{0} + \xt w_{1},
  \end{equation*}
  with $v_{1}\in
  \xt^{p+2}\LDf[0]{k}{\dblzero}{\Lambda_{0}}{\hd(\dblzero)}$ and
  $w_{i}\in
  \xt^{p}\LDf[0]{k-1/2}{\dblzero}{\Lambda_{1}}{\hd(\dblzero)}$.

  Iterating this process gives us distributions $u_{j}$ such that
  \begin{equation*}
    P(\lambda )\left(\chi\sum _{j=0}^{N}\xt ^{j}u_{j}\right) = v_{N} + \sum _{j=0}^{N}\xt^{j}w_{j}
  \end{equation*}
  with $v_{N}\in
  \xt^{p+N+1}\LDf[0]{k}{\dblzero}{\Lambda_{0}}{\hd(\dblzero)}$ and
  $w_{j}\in
  \xt^{p}\LDf[0]{k-1/2}{\dblzero}{\Lambda_{1}}{\hd(\dblzero)}$.
  
  We may now asymptotically sum the $\xt ^{j}u_{j}$ to find $u\in
  \xt^{p}\PLzero[k-3/2]$ such that
  \begin{equation*}
    P(\lambda )(\chi u) - f = v + w, \quad v\in
    \xt^{\infty}\LDf[0]{k}{\dblzero}{\Lambda_{0}}{\hd(\dblzero)}, w\in
    \xt^{p}\LDf[0]{k-1/2}{\dblzero}{\Lambda_{1}}{\hd(\dblzero)}. 
  \end{equation*}
  We now extend $v$ by zero to $2\dblzero$, and use proposition 6.6 of
  \cite{melrose-uhlmann:1979} to find
  \begin{equation*}
    \tilde{u}\in \PLzero[k-3/2]
  \end{equation*}
  such that $P(\lambda )\tilde{u} + v \in C^{\infty}(2\dblzero)$. In
  particular, $P(\lambda )(\chi \tilde{u}) - v = \tilde{w}\in
  \xt^{p}\LDf[0]{k-1/2}{\dblzero}{\Lambda_{1}}{\hd(\dblzero)}$. The
  distribution $\chi u + \chi \tilde{u}$ then satisfies
  \begin{equation}
    \label{eq:PL-diag-result}
    P(\lambda )(\chi u + \chi \tilde{u}) - f \in
    \xt^{p}\LDf[0]{k-1/2}{\dblzero}{\Lambda_{1}}{\hd(\dblzero)}. 
  \end{equation}
\end{proof}

We should note here that solving the transport equations for the
symbols on $\Lambda_{1}$ (i.e., when invoking Proposition 6.6 of
\cite{melrose-uhlmann:1979}) actually fixes the behavior of the
solution in both components of $\Lambda_{1} \setminus 0$. In
particular, we may arrange it so that the symbol on $\Lambda_{1}$ has
an expansion in decreasing powers of $(\eta + i0)$ (here $\eta$ is the
fiber variable in $\Lambda_{1} = N^{*}\lightcone$). This allows us to
guarantee that our parametrix $u$ and the error term in
(\ref{eq:PL-diag-result}) are supported on the interior of the light
cone. This is not surprising because the exact solution must also be
supported on the interior of the light cone due to the finite speed of
propagation for the wave equation.

The following lemma is also useful:
\begin{lem}
  When $f = \kappa _{I}$ is the Schwartz kernel of the identity
  operator, then the distribution $u$ constructed in this section is
  also a left parametrix for $P$, i.e., $P_{R}^{t}u - f = \xt
  ^{-n/2}r$, where $r$ is smooth in a neighborhood of the diagonal.
  Here $P_{R}^{t}$ denotes the transpose operator for $P$ acting on
  the right factor. \end{lem}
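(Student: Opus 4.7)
The plan is to run the construction of Proposition~\ref{prop:PL-construction} symmetrically, using the fact that $P_R^t$ and $P_L$ share the same principal symbol and that, by Proposition~\ref{prop:Lag-invariance}, the flowout Lagrangian $\Lambda_1$ is generated equally by $H_L$ and $H_R$. First, I would observe that $P_R^t$ is a second-order differential operator acting on the right factor whose principal symbol, read off from \eqref{eq:op-transpose}, is again the dual metric function of $g$. In particular $P_L$ and $P_R^t$ have identical characteristic sets, and the portion of the symbolic data that enters Proposition~6.6 of~\cite{melrose-uhlmann:1979} (principal symbol plus transport structure along the flow) matches for the two operators.

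Next, I would re-examine each step of the iterative construction. The distribution $u_0$ was fixed by choosing $\sigma^{(0)}(u_0)$ on $\Lambda_0^{0}$ so that the principal symbol of $P_L u_0$ on $\Lambda_0$ matches that of $\kappa_I$, and by solving a transport equation on $\Lambda_1^{0}$ along $H_L$ with initial data at $\partial\Lambda_1^{0}$ coming from $\sigma^{(0)}(u_0)$. Because $P_R^t$ has the same principal symbol on $\Lambda_0$, the first of these conditions simultaneously arranges that the principal symbol of $P_R^t u_0$ on $\Lambda_0$ agrees with that of $\kappa_I$. For the $\Lambda_1$ data, Proposition~\ref{prop:Lag-invariance} says that $H_R - H_L$ is tangent to $\Lambda_0$, and since both $H_L$ and $H_R$ are tangent to $\Lambda_1$, the difference is tangent to $\partial \Lambda_1 = \Lambda_0 \cap \Lambda_1$ along $\Lambda_1$. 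The transport equations for $P_L$ and $P_R^t$ therefore share the same characteristic foliation on $\Lambda_1$ near $\partial\Lambda_1$, and they share the same initial data there, so uniqueness of the transport ODE forces the symbol we constructed to solve both; hence the principal symbol of $P_R^t u_0 - \kappa_I$ vanishes on $\Lambda_1$ in a neighborhood of $\partial \Lambda_1$, i.e.\ near the lifted diagonal.

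Iterating this at each order of the construction and asymptotically summing would show that $P_R^t u - \kappa_I$ has all of its Lagrangian symbols vanishing on $\Lambda_0$ and on $\Lambda_1$ in a neighborhood of $\Delta_0$. Since a paired Lagrangian distribution is smooth off $\Lambda_0 \cup \Lambda_1$, the residual is smooth near the lifted diagonal, yielding $P_R^t u - f = \xt^{-n/2} r$ with $r$ smooth in a neighborhood of the diagonal, as claimed.

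The main obstacle is the careful matching of the transport equations for $P_R^t$ and $P_L$ on $\Lambda_1$ near $\partial\Lambda_1$: one must check that the subprincipal difference $P_R^t - P_L$ contributes only terms whose restriction to $\Lambda_1$ is killed by the compatibility condition at $\partial\Lambda_1$ together with the tangency supplied by Proposition~\ref{prop:Lag-invariance}. Alternatively, one can bypass this direct computation by appealing to the two-sided character of the Melrose--Uhlmann paired Lagrangian parametrix for real principal type operators established in~\cite{melrose-uhlmann:1979}, which produces a simultaneous left and right parametrix in a single construction; the normal operator step done here respects this symmetry, since $N_p(P_R^t)$ agrees with $N_p(P_L)$ up to subprincipal terms that do not affect the construction on the fiber of the front face.
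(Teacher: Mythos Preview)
Your approach is different from the paper's, and it has a genuine gap at exactly the point you flag as ``the main obstacle.'' You argue that the symbol of $u$ on $\Lambda_1$, having been chosen to solve the $P_L$ transport equation, automatically solves the $P_R^t$ transport equation because $H_L$ and $H_R$ generate the same Lagrangian. But the transport operators are $i\mathcal{L}_{H_L}+c_L$ and $i\mathcal{L}_{H_R}+c_R$, where $c_L,c_R$ involve the subprincipal symbols of $P_L$ and $P_R^t$; these differ, and the tangency of $H_R-H_L$ to $\partial\Lambda_1$ does not force the two first-order equations to have the same solution on $\Lambda_1$. Your fallback to ``the two-sided character of the Melrose--Uhlmann parametrix'' does not help either: the MU construction produces a one-sided parametrix for a single operator, and the passage to the other side requires precisely the argument you are trying to supply.

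The paper sidesteps the subprincipal matching entirely. It sets $v=(P_L-P_R^t)u$ and observes that $P_L$ and $P_R^t$ commute (they act on different factors) and that $(P_L-P_R^t)\kappa_I=0$ (the identity commutes with everything). Hence
\[
P_L v=(P_L-P_R^t)P_L u=(P_L-P_R^t)\bigl(\kappa_I+(P_Lu-\kappa_I)\bigr)=(P_L-P_R^t)(P_Lu-\kappa_I),
\]
and since $P_Lu-\kappa_I$ is smooth near the diagonal (the cutoff $\chi$ is identically $1$ there), $P_Lv$ is smooth near the diagonal. Now the paired-Lagrangian symbol calculus does the work: on $\Lambda_0\setminus\partial\Lambda_1$, $\sigma(P_L)$ is nonvanishing, so $\sigma^{(0)}(v)=0$; this fixes zero initial data for the $P_L$ transport equation on $\Lambda_1$, whose unique solution is then $\sigma^{(1)}(v)=0$. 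Iterating in the order drives $v$ into $\PLzero[-\infty]$ near the diagonal. The point is that one never compares the two transport equations; one only uses the $P_L$ transport equation, applied to $v$ rather than to $u$, with the commutativity supplying the zero right-hand side.
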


\begin{proof}
  This is an application of the symbol calculus for paired Lagrangian
  distributions.

  We start by observing that $(P_{L} - P_{R}^{t} )\kappa_{I} = 0$
  because the identity operator commutes with any operator. In other
  words, we must have that
  \begin{equation*}
    \int (P_{L}\kappa_{I})v = Pv = \int \kappa_{I} (Pv) = \int (P_{R}^{t}\kappa_{I})v.
  \end{equation*}
  
  Now let $v = (P_{L} - P_{R}^{t})u \in \xt ^{-n/2}\PLzero[k]$. We
  know that $P_{L}v$ is smooth in a neighborhood of the diagonal, down
  to the front face, because $P_{L}$ and $P_{R}^{t}$ commute. We now
  invoke the symbol calculus:
  \begin{equation*}
    0 = \sigma (P_{L} v)|_{\Lambda_{0}\setminus \pd \Lambda_{1}} =
    \sigma(P_{L})\sigma(v)|_{\Lambda_{0}\setminus \pd\Lambda_{1}}, 
  \end{equation*}
  and $\sigma (P_{L}) \neq 0$ on $\Lambda_{0}\setminus
  \pd\Lambda_{1}$, so $\sigma(v)=0$ on this set. This fixes an initial
  condition for $\sigma (v)$, i.e., $\sigma (v)|_{\pd\Lambda_{1}} = 0$.
  We now use the form of the transport equation
  \begin{equation*}
    0 = \sigma (P_{L}v )|_{\Lambda_{1}} = (i\mathcal{L}_{H_{L}} +
    c)\sigma (v)|_{\Lambda_{1}}, 
  \end{equation*}
  with initial condition $0$, to conclude that $\sigma
  (v)|_{\Lambda_{1}} = 0$. We may use this argument for any $k$, so we
  must have that $v\in \xt ^{-n/2}\PLzero[-\infty]$, proving the
  claim. 
\end{proof}

\section{The transport equation}
\label{sec:transport-equation}

We now wish to solve away the error from
Proposition~\ref{prop:PL-construction}. We call this error $r$ and
note that $r\in
r_{\frontface}^{p}\LDf[0]{k-1/2}{\dblzero}{\Lambda_{1}}{\hd(\dblzero)}$.
By solving a transport equation for some finite time and multiplying
by a cutoff function, we may assume that this error is supported in a
neighborhood of the side faces. Viewed as a conormal distribution near
$\ffp$, $r$ may be written as
\begin{equation}
  \label{eq:transport-model-form-for-conormal-dist}
  \int e^{i\frac{\rho}{s}\eta}a(s,\frac{\rho}{s},\theta, \xt, \yt , \eta) \deta.
\end{equation}
where $\rho$ is a defining function for the light cone and $a$ is a
classical symbol of order $(k-\frac{1}{2}) + \frac{n-1}{2}$. In fact,
as noted in Section~\ref{sec:diagonal-singularity}, we may assume that
the symbol has an expansion in decreasing powers of $(\eta + i0)$.
Near $\lcfp\cap \lcfm$, we must replace $\frac{\rho}{s}$ with
$\frac{\rho}{x\xt}$.

We first fix a defining function $\rho$ for the submanifold
$\lightcone$. Assumption (A3) guarantees that $\lightcone$ is an
embedded submanifold of $\dblzero$. Because $N^{*}\lightcone$ is
characteristic for $P(\lambda)$ and $d\rho$ spans $N^{*}\lightcone$,
we must have that $\hat{g}(d\rho,d\rho) = 0$ at $\lightcone$, i.e.
\begin{equation}
  \label{eq:TE-expression-for-drho}
  \hat{g}(d\rho, d\rho) = \rho b,
\end{equation}
where $b$ is a smooth function.

In coordinates near $\pd X = Y$, we may write
\begin{align*}
  P(\lambda) = (s\pd[s])^{2} - (n-1)s\pd[s] + \frac{\xt
    s\pd[s]\sqrt{h}}{\sqrt{h}}s\pd[s] +s^{2}\lap _{h} - \lambda. 
\end{align*}

Our ansatz is that $u$ is polyhomogeneous at $\lcfp$ and $\lcfm$ and
conormal to $\lightcone$, and so we seek an expression of the form
(\ref{eq:transport-model-form-for-conormal-dist}) with $a$ a classical
symbol of order $k-\frac{3}{2}+\frac{n-1}{2} = k + \frac{n}{2} - 2$
with an expansion in powers of $(\eta + i0)$, i.e., of the form
\begin{equation}
  \label{eq:TE-ansatz}
  u \asympto \sum _{j\geq 0} \int e^{i\frac{\rho}{s}\eta}(\eta + i0) ^{k + \frac{n}{2} - 2 - j}a_{j}\deta .
\end{equation}
This is because our error $r$ from
Section~\ref{sec:diagonal-singularity} is a Lagrangian distribution of
order $k-\frac{1}{2}$ associated to the conormal bundle of
$\lightcone$. $\lightcone$ is characteristic for $P(\lambda)$, so we
expect the solution of $P(\lambda)u = r$ to be conormal of one order
better, i.e., of order $k-\frac{3}{2}$. Moreover, at each step, we
multiply the symbols $a_{j}$ by a compactly supported smooth function
in $\frac{\rho}{s}$ that is identically $1$ near $0$, which makes the
singularity of $(\eta + i0)^{k+\frac{n}{2}-2-j}$ at $0$ superfluous.
Note that we could equivalently insist on expressing our ansatz in
powers of $\left( \frac{\rho}{s}\right)_{+}$. If $k=0$, then the top
power seen here would be $-1 + 2 - \frac{n}{2} = 1-\frac{n}{2}$, the
same powers seen in the construction of the fundamental solution of
the wave equation on Minkowski space (in our convention, $n$ is the
total dimension of the spacetime).

\begin{lem}
  \label{lem:TE-apply-op-to-ansatz-result}
  Suppose $u$ is of the form (\ref{eq:TE-ansatz}), and $a_{j} =
  \tilde{a}_{j}\nu$, where $\nu$ is a fixed nonvanishing section of
  $\hd(\dblspacet)$. If we write $\gamma_{j} = k + \frac{n}{2}-2 -j$,
  then near $\lcfp$ away from $\lcfp\cap \lcfm$ we have
  \begin{align}
    \label{eq:TE-op-to-ansatz-result}
    P(\lambda)u \asympto &\sum_{j=0}^{\infty} \int
    e^{i\frac{\rho}{s}\eta} (\eta + i0) ^{\gamma_{j} + 1}
    \left(-2i\hat{g}(d\rho, sd\tilde{a}_{j}) -
      i(\pd[s]\rho)(n-3-2\gamma_{j}+O(s))\tilde{a}_{j}
    \right)\deta\,\nu \notag\\ 
    &+\sum_{j=0}^{\infty}\int e^{i\frac{\rho}{s}\eta} (\eta + i0)
    ^{\gamma_{j}} \bigg( P(\lambda) + (\gamma_{j} + O(s))s\pd[s] -
    (\gamma_{j}+1)(n-3-2\gamma_{j}) \notag\\ 
    &\quad \left.  + \frac{1}{2}\left(n-\frac{1}{2}\right) +
      O(s)s\pd[z] + O(s)\right)\tilde{a}_{j}\deta\,\nu 
  \end{align}
  where $O(s)$ is taken to mean an element of $sC^{\infty}$ and
  $\left(s = \frac{x}{\xt}, z= \frac{y-\yt}{\xt}\right)$ are
  projective coordinates near the front face. 
\end{lem}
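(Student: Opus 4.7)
The plan is to prove the lemma by direct computation: we apply each term of
\[ P(\lambda) = (s\pd[s])^{2} - (n-1)s\pd[s] + \frac{\xt s\pd[s]\sqrt{h}}{\sqrt{h}}s\pd[s] + s^{2}\lap_{h} - \lambda \]
to a generic summand $e^{i\rho\eta/s}\tilde{a}_{j}(\eta+i0)^{\gamma_{j}}\,\nu$ of the ansatz and organize the result by powers of $(\eta+i0)$. Each $s\pd[s]$ or $s\pd[z_{i}]$ in $P(\lambda)$ hits either the exponential, producing $i\eta$ times $s\pd[s](\rho/s)=\pd[s]\rho-\rho/s$ or $s\pd[z_{i}](\rho/s)=\pd[z_{i}]\rho$, or the amplitude $\tilde{a}_{j}$. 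This splits the result into contributions that are quadratic, linear, and constant in $\eta$, which after multiplying by $(\eta+i0)^{\gamma_{j}}$ correspond to orders $\gamma_{j}+2$, $\gamma_{j}+1$, and $\gamma_{j}$ respectively.

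The crucial step is handling the would-be $(\eta+i0)^{\gamma_{j}+2}$ contribution, obtained when both derivatives in the principal part fall on the exponential. The resulting coefficient is (up to sign) $\eta^{2}$ times the principal symbol of $P(\lambda)$ evaluated at $d(\rho/s)$; a short computation identifies this with $s^{-2}\hat{g}(d\rho,d\rho)$ modulo $O(\xt)$ errors arising from the $x$- and $y$-dependence of $h$ in the coefficient $\frac{\xt s\pd[s]\sqrt{h}}{\sqrt{h}}s\pd[s]$ and in $h$ itself evaluated at $(\xt s,\yt+\xt z)$. Equation~\eqref{eq:TE-expression-for-drho} gives $\hat{g}(d\rho,d\rho)=\rho b$ with $b$ smooth, so this contribution factors through $\rho$. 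I then integrate by parts in $\eta$ using $\rho\,e^{i\rho\eta/s}=-is\,\pd[\eta]e^{i\rho\eta/s}$; since $\tilde{a}_{j}$ is independent of $\eta$, one IBP replaces $\rho(\eta+i0)^{\gamma_{j}+2}$ by $is(\gamma_{j}+2)(\eta+i0)^{\gamma_{j}+1}$, collapsing the top-order piece into the $(\eta+i0)^{\gamma_{j}+1}$ level. This is where the $-2\gamma_{j}$ inside $(n-3-2\gamma_{j})$ originates.

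Collecting the remaining $(\eta+i0)^{\gamma_{j}+1}$ terms is then bookkeeping. The cross terms in the principal part (one derivative on the exponential, one on $\tilde{a}_{j}$) assemble, by the same $\hat{g}$-pairing identification as above, into $-2i\hat{g}(d\rho,s\,d\tilde{a}_{j})$, where $s\,d\tilde{a}_{j}$ is the 1-form with components $s\pd[s]\tilde{a}_{j}$ and $s\pd[z_{i}]\tilde{a}_{j}$. The first-order term $-(n-1)s\pd[s]$ applied to the exponential contributes an $i\eta$ piece with coefficient $-(n-1)(\pd[s]\rho-\rho/s)$; the $\rho/s$ factor is absorbed by another IBP, dropping it to the $(\eta+i0)^{\gamma_{j}}$ level. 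Combining the surviving $\pd[s]\rho$ piece with the reduced contribution from the previous paragraph yields the stated coefficient $-i(\pd[s]\rho)(n-3-2\gamma_{j}+O(s))\tilde{a}_{j}$. The $(\eta+i0)^{\gamma_{j}}$ contributions come from applying $P(\lambda)$ directly to $\tilde{a}_{j}$, together with the subleading pieces generated by the IBP operations above and by the term $\frac{\xt s\pd[s]\sqrt{h}}{\sqrt{h}}s\pd[s]$; the explicit constants $(\gamma_{j}+1)(n-3-2\gamma_{j})$ and $\tfrac{1}{2}(n-\tfrac{1}{2})$ (the latter encoding the half-density normalization via the trivialization $\nu$) emerge from this accounting, with $O(s)s\pd[s]$, $O(s)s\pd[z]$, and $O(s)$ remainders absorbing all terms that carry an explicit $x=s\xt$ factor.

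The main obstacle is exactly this bookkeeping — pinning down the numerical constants and verifying that the subleading pieces of $P(\lambda)$ yield only $O(s)$ errors near $\lcfp$ away from $\lcfp\cap\lcfm$. In that chart $\xt$ is bounded and $s$ is small, so $x=\xt s=O(s)$; $x$-derivatives of $h$ therefore give $O(s)$ contributions, while the dependence of $h$ on $\yt+\xt z$ produces only smooth, bounded coefficients whose $s\pd[z]$-derivatives pick up an explicit $s$. The conceptual content — that the top-order $(\eta+i0)^{\gamma_{j}+2}$ obstruction vanishes thanks to $\hat{g}(d\rho,d\rho)=\rho b$, so that the equation reduces to a genuine first-order transport at $(\eta+i0)^{\gamma_{j}+1}$ — is the only non-algebraic input, and it is already built into the hypothesis~\eqref{eq:TE-expression-for-drho}.
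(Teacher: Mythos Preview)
Your approach is essentially the paper's: apply $P(\lambda)$ term by term to the oscillatory ansatz, sort by powers of $(\eta+i0)$, use $\hat{g}(d\rho,d\rho)=\rho b$ together with integration by parts in $\eta$ to kill the $(\eta+i0)^{\gamma_{j}+2}$ level, and absorb the half-density trivialization via the commutator of $P(\lambda)$ with $r_{\lcfp}^{-1/2}\xt^{-n/2}$ (the paper makes this last step explicit, using $r_{\lcfp}=s(1+\alpha s)$ in this chart, which is where the $\tfrac{1}{2}(n-\tfrac{1}{2})$ and one of the $-s\pd[s]$ corrections come from).

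One bookkeeping point is misattributed, though. You say the $-2\gamma_{j}$ in $(n-3-2\gamma_{j})$ comes from integrating by parts the $\hat{g}(d\rho,d\rho)=\rho b$ contribution. But as you yourself write, that IBP produces $is(\gamma_{j}+2)(\eta+i0)^{\gamma_{j}+1}$, which carries an explicit factor of $s$ and therefore lands in the $O(s)$ remainder, not in the leading coefficient. The quadratic-in-$\eta$ term obtained from both derivatives hitting the phase is not purely $\hat{g}(d\rho,d\rho)$: since $s\pd[s](\rho/s)=\pd[s]\rho-\rho/s$, expanding $(s\pd[s](\rho/s))^{2}$ produces cross terms $2(\pd[s]\rho)(\rho/s)$ and $(\rho/s)^{2}$ in addition to $(\pd[s]\rho)^{2}$. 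It is the IBP of these $\rho/s$ factors (which does \emph{not} pick up an extra $s$) that brings down the $\gamma_{j}$-dependent constants at level $\gamma_{j}+1$. This does not affect the validity of your overall scheme, but since you flag the bookkeeping as the main obstacle, it is worth getting the source of that particular constant right.
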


\begin{proof}
  We first show the result near $\ffp \cap \lfp$.

  Write $u_{j} = \tilde{u}_{j}\nu$, where $\nu$ is a fixed
  trivialization of $\hd (\dblspacet)$. Say, for concreteness, that
  $\nu = r_{\lcfp}^{-1/2}\xt^{-n/2}\upsilon$. We then have that
  \begin{align*}
    P(\lambda) (\tilde{u}_{j}\nu) &=
    \left(P(\lambda)\tilde{u}_{j}\right)\upsilon +
    r_{\lcfp}^{1/2}\xt^{n/2} \left([P(\lambda),
      r_{\lcfp}^{-1/2}\xt^{-n/2}]\right)\tilde{u}_{j}. 
  \end{align*}

  We note also that near $\lcfp$ but away from $\lfp$,
  $r_{\lcfp}=s(1+\alpha s)$, where $\alpha$ is smooth, and so we may
  easily calculate this commutator:
  \begin{align*}
    (r_{\lcfp}^{1/2}\xt ^{n/2})[P(\lambda),
    r_{\lcfp}^{-1/2}\xt^{-n/2}] = \left( -1 + O(s)\right) s\pd[s] +
    O(s)s\pd[z] + \frac{1}{2}\left( n-\frac{1}{2} + O(s)\right). 
  \end{align*}

  We now use this calculation to drop the density factor. We apply
  $P(\lambda)$ to our ansatz and use
  equation~(\ref{eq:TE-expression-for-drho}). Integration by parts
  allows us to exchange powers of $\frac{\rho}{s}$ for decreasing
  powers of $(\eta + i0)$, as $\frac{\rho}{s}e^{i\frac{\rho}{s}\eta} =
  \frac{1}{i}\pd[\eta]e^{i\frac{\rho}{s}\eta}$. If we write $\gamma
  _{j} = k + \frac{n}{2}-2-j$, this yields
  \begin{align*}
    P(\lambda)\tilde{u} &= \left( P(\lambda) + (-1 + O(s))s\pd[s] +
      \frac{1}{2}\left(n- \frac{1}{2} + O(s)\right)\right) \tilde{u}
    \\ 
    &\asympto \sum _{j=0}^{\infty} \int e^{i\frac{\rho}{s}\eta} (\eta
    + i0)^{\gamma_{j} + 2}\hat{g}(d\rho,d\rho)\tilde{a}_{j}\deta \\ 
    &\quad+ \sum_{j=0}^{\infty}\int e^{i\frac{\rho}{s}\eta} (\eta +
    i0)^{\gamma_{j} + 1} \bigg(-2i\hat{g}(d\rho, sd\tilde{a}_{j}) \\ 
    &\quad\quad\quad- i(\pd[s]\rho)(n-1-2\gamma_{j} - 3)\tilde{a}_{j} -
    i(\pd[s]\rho)\tilde{a}_{j} + O(s)\tilde{a}_{j}\bigg)\deta \\ 
    &\quad+ \sum_{j=0}^{\infty} \int e^{i\frac{\rho}{s}\eta} (\eta +
    i0)^{\gamma_{j}}\bigg( P(\lambda) + 2(\gamma_{j}+1)s\pd[s] -
    (\gamma_{j}+1)\left( n-1-2\gamma_{j}-1\right) \\ 
    &\quad\quad\quad + (\gamma_{j}+1) - s\pd[s] + \frac{1}{2}\left(
      n-\frac{1}{2}\right) + O(s)s\pd[s] +O(s)s\pd[z] +
    O(s)\bigg)\tilde{a}_{j}\deta, 
  \end{align*}
  where $O(s)$ is taken to mean an element of $sC^{\infty}$. We now
  use that $\hat{g}(d\rho,d\rho) = \rho b$ and integrate by parts to
  prove the first part of the claim.

  We finish the proof with a similar calculation, where we change our
  operator to $P_{R}^{t}$ and our ansatz to be one of the following
  two forms: 
  \begin{align*}
    u &\asympto \sum _{j=0}^{\infty}\int
    e^{i\frac{\rho}{\tilde{s}}\eta}a_{j}(\eta + i0)^{\gamma_{j}}\deta,
    \\ 
    u &\asympto \sum _{j=0}^{\infty}\int
    e^{i\frac{\rho}{x\xt}\eta}a_{j}(\eta + i0)^{\gamma_{j}}\deta , 
  \end{align*}
  where $\tilde{s}= \frac{\xt}{x}$, and $P_{R}^{t}$ is given by
  equation~(\ref{eq:op-transpose}). We use here that $r_{\lcfm} = \st
  (1 + \alpha \st)$ in the first case and that $r_{\lcfm} = \xt
  (1+\tilde{\alpha} \xt)$ and $r_{\lcfp} = x(1+\alpha x)$ in the
  second case. 
\end{proof}

\begin{lem}
  \label{lem:full-action-on-conormal}
  Similarly, we may compute the right operator near $\lcfm$ away from
  the corner:
  \begin{align*}
    P(\lambda)^{t}_{R} u = &\sum _{j=0}^{\infty} \int
    e^{i\frac{\rho}{\st}\eta} (\eta + i0)^{\gamma_{j}+1} \left(
      -2i\hat{g}(d_{R}\rho, \st d_{R}\tilde{a}_{j}) +
      i(\pd[\st]\rho)(n + 3 + 2\gamma_{j} + O(\st)
      )\tilde{a}_{j}\right) \deta \, \nu \\ 
    &+ \sum _{j=0}^{\infty} e^{i\frac{\rho}{\st}\eta} (\eta + i0)
    ^{\gamma_{j}} \bigg( P(\lambda)_{R}^{t} +
    (\gamma_{j}+O(\st))\st\pd[\st] + (\gamma_{j}+1)(n+3+2\gamma_{j})
    \notag \\ 
    &\quad \left. + \frac{1}{2}\left(n-\frac{1}{2}\right) +
      O(\st)\st\pd[\zt] + O(\st)\right) \tilde{a}_{j}\deta\, \nu 
  \end{align*}

  We may also compute the behavior of the left and right operators
  near $\lcfp\cap \lcfm$:
  \begin{align*}
    P(\lambda)u \asympto &\sum_{j=0}^{\infty} \int
    e^{i\frac{\rho}{x\xt}\eta} (\eta + i0) ^{\gamma_{j} + 1}
    \frac{1}{\xt}\left(-2i\hat{g}(d\rho, xd\tilde{a}_{j}) -
      i(\pd[x]\rho)(n-3-2\gamma_{j}+O(x))\tilde{a}_{j}
    \right)\deta\,\nu \notag\\ 
    &+\sum_{j=0}^{\infty}\int e^{i\frac{\rho}{x\xt}\eta} (\eta + i0)
    ^{\gamma_{j}} \bigg( P(\lambda) + (\gamma_{j} + O(x))x\pd[x] -
    (\gamma_{j}+1)(n-3-2\gamma_{j}) \\ 
    &\quad \left.  + \frac{1}{2}\left(n-\frac{1}{2}\right) +
      O(x)x\pd[y] + O(x)\right)\tilde{a}_{j}\deta\,\nu \\ 
    P(\lambda)^{t}_{R} u = &\sum _{j=0}^{\infty} \int
    e^{i\frac{\rho}{x\xt}\eta} (\eta + i0)^{\gamma_{j}+1}
    \frac{1}{x}\left( -2i\hat{g}(d_{R}\rho, \xt d_{R}\tilde{a}_{j})
      +i(\pd[\xt]\rho)(n + 3 + 2\gamma_{j} + O(\xt)
      )\tilde{a}_{j}\right) \deta \, \nu \\ 
    &+ \sum _{j=0}^{\infty} e^{i\frac{\rho}{x\xt}\eta} (\eta + i0)
    ^{\gamma_{j}} \bigg( P(\lambda)_{R}^{t} +
    (\gamma_{j}+O(\xt))\xt\pd[\xt] + (\gamma_{j}+1)(n+3+2\gamma_{j})
    \notag \\ 
    &\quad \left. + \frac{1}{2}\left(n-\frac{1}{2}\right) +
      O(\xt)\xt\pd[\yt] + O(\xt)\right) \tilde{a}_{j}\deta\, \nu 
  \end{align*}
\end{lem}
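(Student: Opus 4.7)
The proof proceeds by repeating the computation of Lemma~\ref{lem:TE-apply-op-to-ansatz-result} in three different coordinate charts, each adapted to the boundary geometry of the neighborhood in question. In every case the pattern is the same: trivialize the half-density bundle near the relevant corner, compute the resulting commutator, apply the operator formally to the ansatz, then use $\hat{g}(d\rho,d\rho) = \rho b$ together with the identity $\rho e^{i\rho\eta/\phi} = \tfrac{\phi}{i}\partial_\eta e^{i\rho\eta/\phi}$ to exchange the top-order factor of $\rho$ for one decreasing power of $(\eta+i0)$. The orders of $(\eta+i0)$ in the output then drop by one and two relative to the ansatz, exactly as in the previous lemma.

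For $P(\lambda)^{t}_{R} u$ near $\lcfm$ away from the corner, I would use the ansatz with phase $\rho/\st$, coordinates $(x,y,\st,\zt)$ with $\st = \xt/x$ and $\zt = (\yt-y)/x$, and the density trivialization $\nu = r_{\lcfm}^{-1/2}x^{-n/2}\upsilon$. Since $r_{\lcfm} = \st(1+\alpha\st)$ in these coordinates, a direct computation of $[P^{t}_{R}, r_{\lcfm}^{-1/2}x^{-n/2}]$ yields the analogue of the commutator found in the previous lemma, namely
\begin{equation*}
  (r_{\lcfm}^{1/2}x^{n/2})[P^{t}_{R}, r_{\lcfm}^{-1/2}x^{-n/2}] = (\text{const} + O(\st))\st\pd[\st] + O(\st)\st\pd[\zt] + \tfrac{1}{2}(n-\tfrac{1}{2}) + O(\st).
\end{equation*}
The sign and combinatorics of the subprincipal terms differ slightly from the left operator because $P^{t}_{R}$ from equation~(\ref{eq:op-transpose}) has a different arrangement of lower-order terms; these produce the $+i(\pd[\st]\rho)(n+3+2\gamma_j)$ coefficient in place of $-i(\pd[s]\rho)(n-3-2\gamma_j)$, and similarly flip the sign of the $\gamma_j(\gamma_j+1)$ term.

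For the two corner statements near $\lcfp\cap \lcfm$, I would work in projective coordinates adapted to the corner where $r_{\lcfp} = x(1+\alpha x)$ and $r_{\lcfm} = \xt(1+\tilde{\alpha}\xt)$, with phase $\rho/(x\xt)$ and density trivialization $\nu = r_{\lcfp}^{-1/2}r_{\lcfm}^{-1/2}(x\xt)^{-n/2}\upsilon$. The phase factor $\rho/(x\xt)$ is the key change: when a spatial derivative $x\pd[x]$ (resp. $\xt\pd[\xt]$) hits the exponential, it produces $i\eta\pd[x]\rho/\xt$ (resp. $i\eta\pd[\xt]\rho/x$), and this is the origin of the prefactors $1/\xt$ and $1/x$ appearing in the top-order terms of the stated formulas. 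The reduction $\hat{g}(d\rho,d\rho) = \rho b$ is used exactly as before, and for $P^{t}_{R}$ the same sign flips that appeared in the non-corner calculation reappear here.

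The main obstacle will be bookkeeping rather than conceptual: one must carefully track the distinction between the left operator $P(\lambda)$ (in terms of $x\pd[x], x\pd[y]$) and the right transpose $P^{t}_{R}$ (given by equation~(\ref{eq:op-transpose}) with $x,y$ replaced by $\xt,\yt$), keep the signs right when projective coordinates such as $\st = \xt/x$ are inverted relative to $s = x/\xt$, and ensure that the commutator contributions from the chosen density trivializations combine correctly with the subprincipal action of the operator on the symbol. Once these identifications are made, each claim follows from a routine integration by parts identical in structure to the proof of Lemma~\ref{lem:TE-apply-op-to-ansatz-result}.
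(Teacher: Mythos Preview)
Your proposal is correct and follows the same approach as the paper. The paper does not give a separate proof for this lemma; it is handled at the end of the proof of Lemma~\ref{lem:TE-apply-op-to-ansatz-result} by the sentence ``We finish the proof with a similar calculation, where we change our operator to $P_{R}^{t}$ and our ansatz to be one of the following two forms\ldots'', together with exactly the density identifications $r_{\lcfm} = \st(1+\alpha\st)$, $r_{\lcfp} = x(1+\alpha x)$, $r_{\lcfm} = \xt(1+\tilde{\alpha}\xt)$ that you use.
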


Because we wish to solve $P(\lambda)u = r$ up to smooth terms, we wish
to iteratively solve away the terms in the above expansions. The first
transport equation we must solve is then 
\begin{equation*}
  -2\hat{g}(d\rho , sd \tilde{a}_{0}) - (\pd[s]\rho)(n-3-2\gamma_{0} +
  O(s))\tilde{a}_{0} = r_{0}, 
\end{equation*}
where $r_{0}$ is compactly supported and comes from the inhomogeneous
term.

Because $\pd[s]\rho$ is nonzero, we may divide by it to obtain the
transport equation 
\begin{equation*}
  -\frac{h^{kl}\left(\pd[z_{k}]\rho\right)
    s\pd[z_{l}]\tilde{a}_{0}}{\pd[s]\rho} + s\pd[s]\tilde{a}_{0} -
  (\frac{n}{2}-\frac{3}{2}-\gamma_{0} + O(s))\tilde{a}_{0} =
  \frac{r_{0}}{2\pd[s]\rho} 
\end{equation*}
Note that $\gamma_{0}=k + \frac{n}{2}-2$, so the coefficient of
$\tilde{a}_{0}$ is just $k-\frac{1}{2} + O(s)$.

Near the face $\lcfp$, given by $s=0$, we may use a parameter $t$
along the light cone $\lightcone$. The parameter $t$ is then equivalent to $s$, so
we may change coordinates to 
\begin{equation*}
  t\pd[t] \tilde{a}_{0} + (k-\frac{1}{2})\tilde{a}_{0} = O(t) \tilde{a}_{0} + \tilde{r}.
\end{equation*}
We note that the solution $\tilde{a}_{0}$ of this equation must have a
polyhomogeneous expansion in $t$. In fact, we prove a more precise
version of this statement.

\begin{lem}
  \label{lem:TE-sols-are-phg}
  Suppose that $v$ solves the differential equation
  \begin{equation*}
    t\pd[t] v - (j - k+\frac{1}{2}) v = t\cdot c(t)v + b,
  \end{equation*}
  where $c$ is smooth in $t$, and $b$ is polyhomogeneous in $t$ with
  index set
  \begin{equation*}
    \E _{j-1}=\left\{ \left(-k +\frac{1}{2}+ l, i\right) : l\in
      \naturals, i \leq l\text{ if }l \leq j-1, i=j-1\text{ if }l \geq
      j\right\} 
  \end{equation*}
  when $j\neq 0$ and $b$ is supported away from $t=0$ for $j=0$. Then
  $v$ has a polyhomogeneous expansion in $t$ with index set
  \begin{equation*}
    \E_{j}=\left\{ \left(-k +\frac{1}{2}+ l, i\right) : l\in \naturals, i \leq l\text{
      if }l \leq j, i=j\text{ if }l > j\right\}. 
  \end{equation*}
\end{lem}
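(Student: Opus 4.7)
The plan is to exploit the indicial structure of the left-hand side. Write $\alpha = j - k + \tfrac{1}{2}$ and $L = t\partial_{t} - \alpha$, so the equation reads $Lv = b + tc(t)v$. The identity
\begin{equation*}
  L\bigl(t^{r}(\log t)^{i}\bigr) = (r-\alpha) t^{r}(\log t)^{i} + i\, t^{r}(\log t)^{i-1}
\end{equation*}
furnishes a formal right inverse $L^{-1}$ on polyhomogeneous series: for $r \neq \alpha$, $L^{-1}$ preserves the exponent and does not raise log powers; at the indicial root $r = \alpha$, one has $L^{-1}(t^{\alpha}(\log t)^{i}) = \tfrac{1}{i+1} t^{\alpha}(\log t)^{i+1}$ modulo the kernel $\complexes\cdot t^{\alpha}$.

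First I would verify that $L^{-1}$ sends polyhomogeneous series with index set $\E_{j-1}$ into series with index set $\E_{j}$. The only exponent in $\E_{j-1}$ that meets the indicial root is $l = j$, and there the top log power is exactly $j-1$; applying $L^{-1}$ raises it to $j$, matching the definition of $\E_{j}$. At every other exponent $L^{-1}$ preserves log powers, and the remaining bounds from $\E_{j-1}$ are already contained in those of $\E_{j}$. Next I would set up the formal Neumann iteration
\begin{equation*}
  \hat v \sim \sum_{n\geq 0} \bigl(L^{-1}\circ tc(t)\,\cdot\bigr)^{n} L^{-1}b.
\end{equation*}
Multiplication by $tc(t)$ shifts $l$ up by at least one while preserving log powers, so the sum converges in the sense of formal polyhomogeneous series (only finitely many terms contribute to each exponent). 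A direct check using the first step shows that $L^{-1}\circ tc(t)\,\cdot$ closes on $\E_{j}$: the only indicial-root contribution arises at $l=j$, where the input has log power at most $j-1$, so the output has log power at most $j$.

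Borel summing $\hat v$ then yields an honest polyhomogeneous function $V$ with index set $\E_{j}$ such that $LV - tc(t) V - b$ is smooth and vanishes to infinite order at $t = 0$. The residual $w = v - V$ satisfies the first-order linear ODE $Lw - tc(t) w = r$ with $r$ smooth and rapidly vanishing, which I would solve explicitly using the integrating factor $t^{-\alpha}\exp\bigl(-\int_{0}^{t} c(s)\ds\bigr)$; the resulting $w$ lies in $t^{\alpha}C^{\infty}$ locally near $t=0$. Each monomial $t^{\alpha + m}$ with $m\in\naturals$ belongs to $\E_{j}$ (with trivial log power), so $v = V + w$ has the claimed polyhomogeneous expansion.

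The main obstacle is the combinatorial bookkeeping of log powers: one must verify that $L^{-1}\circ tc(t)\,\cdot$ preserves $\E_{j}$ despite the fact that $tc(t)$ applied to an $\E_{j}$ series can produce terms at $l > j$ with log powers as large as $j$, which exceed the bound $j-1$ imposed on $\E_{j-1}$. This apparent obstruction is compensated by the fact that log-raising under $L^{-1}$ occurs only at the single exponent $l = j$, where it precisely lifts the bound from $j-1$ to $j$; at all other exponents the inherited bound is already $\leq j$ and hence is absorbed into $\E_{j}$.
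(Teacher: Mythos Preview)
Your argument is correct and follows essentially the same strategy as the paper: construct a formal polyhomogeneous series solution and Borel sum it, then observe that the difference with the actual solution is absorbed by the index set. The paper organizes the formal step as a direct coefficient recursion (fixing the free leading coefficient by the initial data), whereas you package it as a Neumann iteration with the formal inverse $L^{-1}$ and then handle the residual $w$ explicitly via the integrating factor; the combinatorial content---tracking that log-raising occurs only at the indicial exponent $l=j$---is the same in both.
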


\begin{note}
  We may prove a similar lemma for solutions of the transport
  equations on the right, with appropriate modifications for the index
  sets. \end{note}

\begin{proof}
  This lemma follows as a simple exercise in the b-calculus of Melrose
  (cf.\ \cite{grieser:2001} or \cite{melrose:1993}) or as an exercise
  in the theory of hyperbolic Fuchsian operators (cf.\
  \cite{MR701390},\cite{MR764343}, or \cite{MR935706}). Because the
  proof of this lemma is elementary, we include it here.

We show this by constructing a formal power series solution.

We start with the case $j=0$ so that near $t=0$, $b\equiv 0$. We seek
a formal power series solution of the form 
\begin{equation*}
  t^{-k+\frac{1}{2}}\sum _{l=0}^{\infty}v_{l}t^{l}.
\end{equation*}
Indeed, with this ansatz, the equation becomes
\begin{equation*}
  t^{-k+\frac{1}{2}}\sum _{l=0}^{\infty} l v_{l} = t^{-k+\frac{1}{2}}\sum _{l=0}^{\infty} q_{l-1},
\end{equation*}
where $q_{l-1}$ depends only on $c(t)$ and the first $l-1$
coefficients $v_{i}$ (so $q_{-1}=0$). The coefficient $v_{0}$ is fixed
by the initial condition of the differential equation, and then the
remaining coefficients may be found iteratively.

We then sum this series with Borel summation to find a function with
this power series at $t=0$ and the difference between this function
and the solution of the differential equation vanishes to all orders
at $t=0$.

For general $j$, we write 
\begin{equation*}
  b = \sum _{l=0}^{j-1}\sum _{i=0}^{\min(l,j-1)} b_{li} t^{l}(\log t)^{i}
\end{equation*}
with a similar expression for $v$.  A similar calculation then reduces
the equation to
\begin{align*}
  &\sum_{l=0}^{\infty}\sum _{i=0}^{\min(l,j)}t^{-k+\frac{1}{2}+l}(\log
  t)^{i}\left( (l-j)v_{li} + (i+1)v_{l,i+1}\right) \\ 
  &\quad = \sum _{l=1}^{\infty}\sum_{i=0}^{l}t^{-k+\frac{1}{2}+l}(\log
  t)^{i}\left(q_{l-1, i} + b_{li}\right), 
\end{align*} 
where $q_{l-1,i}$ depends on the function $c(t)$ and the coefficients
$v_{l'i'}$ where $l' \leq l-1$ or $l'=l$ and $i' > i$. In particular,
we may again iteratively solve for each coefficient and then Borel sum
the result. 
\end{proof}

We may now apply Lemma~\ref{lem:TE-sols-are-phg} to the first
transport equation 
\begin{equation*}
  t\pd[t]\tilde{a}_{0} - (j -k+\frac{1}{2})\tilde{a}_{0} + t c(t)\tilde{a}_{0} = b(t),
\end{equation*}
where $c(t)$ is smooth in $t$. We find that $a_{0}$ is polyhomogeneous
in $t$ with index set $\E_{0}$. Changing coordinates back to $s$ tells
us that $a_{0}$ is polyhomogeneous in $s$ with index set $\E_{0}$.

Letting $Q_{j}$ be the operator acting on $\tilde{a}_{j}$ in the
coefficient of $(\eta + i0)^{\gamma_{0}- j}$ in
equation~(\ref{eq:TE-op-to-ansatz-result}), the $j$th transport
equation is then 
\begin{equation*}
  -\frac{h^{kl}\left(\pd[z_{k}]\rho\right)s\pd[z_{l}]\tilde{a}_{j}}{\pd[s]\rho}
  + s\pd[s]\tilde{a}_{j} - (j-k+\frac{1}{2})\tilde{a}_{j} +
  O(s)\tilde{a}_{j} = -Q_{j}\tilde{a}_{j-1}. 
\end{equation*}
By applying Lemma~\ref{lem:TE-sols-are-phg} again, we may conclude
that $a_{j}$ is polyhomogeneous in $s$ with index set $\E_{j}$.

Now let $\E = \cup _{j}\E_{j}$. By repeating the process above, we
obtain conormal distributions $u_{j}$ in $\CDf[\E]{k-3/2 -
  j}{\dblspacet}{\lightcone}$ with symbols $a_{j}$ such that
\begin{equation*}
  P(\lambda)\left(\sum _{j=0}^{N}u_{j}\right)- r \in
  \CDf[\E]{k-1/2-N-1}{\dblspacet}{\lightcone}. 
\end{equation*}

We now wish to asymptotically sum this expression to find $u$ such
that $P(\lambda)u - f$ is smooth on the interior of $\dblspacet$.

\begin{lem}
  \label{lem:TE-asymp-sum}
  There is a distribution $u\in
  \CDf[\E]{k-3/2}{\dblspacet}{\lightcone}$, supported on the interior
  of the light cone, such that
  \begin{equation*}
    u \asympto \sum _{j=0}^{\infty}u_{j} .
  \end{equation*}
\end{lem}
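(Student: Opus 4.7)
The plan is to construct $u$ by a standard Borel (asymptotic) summation of the symbols defining the $u_j$, localized via a partition of unity on a neighborhood of $\lightcone$ in $\dblspacet$. Outside such a neighborhood each $u_j$ is smooth and supported near $\lightcone$, so there is nothing to sum there.

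In a coordinate patch near $\lcfp$ away from $\lcfp \cap \lcfm$, the distribution $u_j$ has the form
\begin{equation*}
  u_j = \int e^{i\rho\eta/s}(\eta + i0)^{\gamma_j}\tilde{a}_j(s,\theta,\xt,\yt,\eta)\deta,
\end{equation*}
where $\tilde{a}_j$ is polyhomogeneous in $s$ with index set $\E_j$ and classical in $\eta$, with the analogous forms in the other two types of patches (near $\lcfm$ and near $\lcfp \cap \lcfm$) given by Lemma \ref{lem:full-action-on-conormal}. I would fix a smooth cutoff $\chi$ with $\chi\equiv 0$ near $0$ and $\chi\equiv 1$ near infinity, and pick a sequence $\epsilon_j \downarrow 0$ decreasing rapidly enough that for each $N$ the tail $\sum_{j\ge N} \chi(\epsilon_j\eta)(\eta+i0)^{\gamma_j}\tilde{a}_j$ defines a classical symbol of order $\gamma_N$ with $C^N$ seminorms uniformly controlled on compact subsets of the base. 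Setting
\begin{equation*}
  a(s,\theta,\xt,\yt,\eta) = \sum_{j=0}^{\infty}\chi(\epsilon_j\eta)(\eta+i0)^{\gamma_j}\tilde{a}_j(s,\theta,\xt,\yt,\eta),
\end{equation*}
one obtains a classical symbol of order $\gamma_0 = k+\frac{n}{2}-2$, polyhomogeneous in $s$ with index set $\E = \bigcup_j \E_j$, satisfying $a - \sum_{j<N}(\eta+i0)^{\gamma_j}\tilde{a}_j \in S^{\gamma_N}$. The oscillatory integral with symbol $a$ is then the desired local representative of $u$.

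Patching with a partition of unity subordinate to the chosen cover produces a global distribution $u\in\CDf[\E]{k-3/2}{\dblspacet}{\lightcone}$ with $u \asympto \sum_{j=0}^\infty u_j$. Support on the closure of $\lightcone^{int}$ follows from the fact, already exploited in Section~\ref{sec:diagonal-singularity}, that the inverse Fourier transform in $\eta$ of any $(\eta+i0)^{\gamma}$ is supported in $\{\rho/s \ge 0\}$, a property manifestly preserved by the summation.

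The main obstacle is keeping the polyhomogeneous structure consistent across the three kinds of coordinate patches: the expansions in $s$ near $\lcfp$, in $\st$ near $\lcfm$, and the mixed expansion near the corner $\lcfp \cap \lcfm$ must all be encoded by the same index family on $\dblspacet$. However, this compatibility was already built into the construction of each $u_j$ by the iterative solution of the transport equations via Lemma~\ref{lem:TE-sols-are-phg} and the analogous statements near the other faces. Once this coherence is in hand, the Borel summation is entirely standard.
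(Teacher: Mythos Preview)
Your approach is essentially the paper's: establish that each $u_j$ is supported in $\{\rho/s \ge 0\}$ via the $(\eta+i0)$ prescription, then Borel-sum. The paper invokes Paley--Wiener explicitly (the symbols $(\eta+i0)^{\gamma_j}a_j$ extend analytically and decay in $\Im\eta>0$), and then simply says ``a standard Borel summation argument completes the proof'', with no more detail than you give.

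One point in your write-up deserves a second look. Your chosen summation inserts cutoffs $\chi(\epsilon_j\eta)$ vanishing near $\eta=0$; the resulting total symbol $a=\sum_j\chi(\epsilon_j\eta)(\eta+i0)^{\gamma_j}\tilde a_j$ is no longer the boundary value of something analytic in the upper half-plane, so the support conclusion is not ``manifestly preserved by the summation'' as you claim. The fix is easy: either perform the Borel sum on the $(\rho/s)_+^{-\gamma_j-1}$ side (the paper remarks this is an equivalent ansatz), where support is automatic, or observe that $u$ agrees with every partial sum $\sum_{j<N}u_j$ modulo $I^{k-3/2-N}$, hence $u|_{\{\rho/s<0\}}$ is $C^\infty$, and correct $u$ by a smooth Seeley-type extension of this piece to force support in $\{\rho/s\ge 0\}$ without disturbing the asymptotic expansion. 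The paper is equally silent on this detail, so you are not missing anything the paper supplies; it is just that your sentence overstates what the $\eta$-cutoff construction delivers.
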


\begin{proof}
  Each symbol $(\eta + i0)^{\gamma _{0} - j}a_{j}$ is analytic (in
  $\eta$) and exponentially decreasing in the upper half plane $\Im
  \eta > 0$, so the Paley-Wiener theorem tells us that each $u_{j}$ is
  supported in the region $\{ \frac{\rho}{s} \geq 0\}$. A standard
  Borel summation argument completes the proof.
\end{proof}

Putting our factors of $r_{\frontface}$ from
Section~\ref{sec:diagonal-singularity} back in, we have thus proved:
\begin{prop}
  Given $r\in r_{\frontface}^{p}\CDf[0]{k-1/2}{\dblzero}{\lightcone}$,
  there is a conormal distribution,
  \begin{equation*}
    u\in r_{\frontface}^{p}\phgalt{\F_{1}}\CDf{k-3/2}{\dblspacet}{\lightcone},
  \end{equation*}
  such that 
  \begin{equation*}
    P(\lambda)u - r \in r_{\frontface}^{p}\phg{\F_{1}}{\dblspacet}{\lcfp},
  \end{equation*}
  where $u$ is supported away from the corner $\ffp\cap \lfp$ and
  $\F_{1}$ is given by
  \begin{equation*}
    \F_{1} = \left\{ (j - k +\frac{1}{2}, l) : j,l\in \naturals, l\leq j\right\}.
  \end{equation*}
  We may further arrange that the distribution is supported in the
  interior of the light cone. 
\end{prop}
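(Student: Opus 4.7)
The plan is to take the ansatz
\begin{equation*}
  u \asympto \sum_{j\geq 0}\int e^{i\frac{\rho}{s}\eta}(\eta+i0)^{k+\frac{n}{2}-2-j}\tilde{a}_{j}(s,z,\xt,\yt)\deta \,\nu
\end{equation*}
already motivated in the section, apply $P(\lambda)$ using Lemma~\ref{lem:TE-apply-op-to-ansatz-result}, and equate coefficients of like powers of $(\eta+i0)$ against the symbol expansion of the data $r$. The coefficient of the top power $(\eta+i0)^{\gamma_{j}+2}$ vanishes identically because $\hat{g}(d\rho,d\rho) = \rho b$ and integration by parts absorbs the factor of $\rho/s$ into a symbol one order lower. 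What remains at the $(\eta+i0)^{\gamma_{j}+1}$ level is a first-order transport equation for $\tilde{a}_{j}$ along the null bicharacteristic of $\hat{g}$ that generates $\lightcone$, with inhomogeneity determined by $\tilde{a}_{j-1}$ (and by the symbol of $r$ in the base step).

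Next I would parametrize the bicharacteristic by $t$, which is equivalent to $s$ near $\lcfp$, and divide by the (nonvanishing) factor $\pd[s]\rho$ to put the equation into the form
\begin{equation*}
  t\pd[t]\tilde{a}_{j} - (j - k + \tfrac{1}{2})\tilde{a}_{j} + t\,c(t)\tilde{a}_{j} = b_{j}(t),
\end{equation*}
to which Lemma~\ref{lem:TE-sols-are-phg} applies directly. At the base step $j=0$ the inhomogeneity is compactly supported away from $t=0$ (we cut off appropriately), so $\tilde{a}_{0}$ is polyhomogeneous with index set $\E_{0}$. Inducting on $j$, the right-hand side at step $j$ lives in index set $\E_{j-1}$, and the lemma delivers $\tilde{a}_{j}$ polyhomogeneous with index set $\E_{j}$. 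Shifting back to $s$ changes nothing since $t$ and $s$ are equivalent defining functions for $\lcfp$, and the resulting $u_{j}\in \phgalt{\E_{j}}\CDf{k-3/2-j}{\dblspacet}{\lightcone}$.

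To assemble the final $u$, I invoke Lemma~\ref{lem:TE-asymp-sum} and Borel sum the sequence $\{u_{j}\}$ asymptotically, which produces a conormal distribution of order $k-3/2$ with polyhomogeneous expansion indexed by $\E = \bigcup_{j}\E_{j} = \F_{1}$ at $\lcfp$. The support statement is automatic: each symbol $(\eta+i0)^{\gamma_{j}-j}\tilde{a}_{j}$ is holomorphic and exponentially decaying in the upper half $\eta$-plane, so Paley--Wiener confines each $u_{j}$ to $\{\rho/s\geq 0\}$, i.e.\ the interior of the light cone, and a standard argument preserves this under Borel summation. Multiplying by a cutoff that localizes away from $\ffp\cap\lfp$ and restoring the weight $r_{\frontface}^{p}$ from Section~\ref{sec:diagonal-singularity} yields the claimed $u$ and the error $P(\lambda)u - r$ in $r_{\frontface}^{p}\phg{\F_{1}}{\dblspacet}{\lcfp}$.

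The main obstacle is bookkeeping rather than any single analytic difficulty: tracking how the index sets grow under iteration (each step contributes at most one new log power, which is exactly the reason $\E_{j}$ caps log powers at $j$) and verifying that the first-order transport operator extracted after cancellation really has the indicial root $j - k + \tfrac{1}{2}$ as written, including the $O(s)$ perturbations from the density trivialization and the subprincipal terms in Lemma~\ref{lem:TE-apply-op-to-ansatz-result}. Once those indices align, Lemma~\ref{lem:TE-sols-are-phg} does the substantive work, and the global extension along $\lightcone$ using assumption (A3) (so that the bicharacteristic flow foliates an embedded submanifold) turns the local solution into a global one.
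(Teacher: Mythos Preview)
Your proposal is correct and follows essentially the same route as the paper: the ansatz, the application of Lemma~\ref{lem:TE-apply-op-to-ansatz-result}, the reduction to the regular-singular transport ODE with indicial root $j-k+\tfrac{1}{2}$, the inductive use of Lemma~\ref{lem:TE-sols-are-phg}, and the asymptotic summation via Lemma~\ref{lem:TE-asymp-sum} (with Paley--Wiener for the support) are exactly the steps the paper carries out. The only cosmetic difference is that the paper localizes $r$ near the side faces at the outset rather than applying a cutoff at the end, so the support away from $\ffp\cap\lfp$ is inherited from the data rather than imposed afterward.
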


We observe here that an argument similar to the one given in the
previous section shows that $P_{R}^{t}u - r$ is again smooth on the
interior when we are constructing a parametrix for the fundamental
solution. In particular, the symbols of $u$ satisfy the transport
equations for the actions on the right factor, and so the following
proposition holds: 
\begin{prop}
  \label{prop:TE-conormal-error}
  There is a conormal distribution $u \in r_{\frontface}^{-n/2}
  \mathcal{A}^{\F}\CDf{-3/2}{\dblspacet}{\lightcone}$ such that
  \begin{align*}
    P(\lambda) u - r &\in r_{\frontface}^{-n/2} \phgalt{\E_{L}}(\dblspacet), \\
    P(\lambda) ^{t}_{R}u - r &\in r_{\frontface}^{-n/2}\phgalt{\E_{R}}(\dblspacet).
  \end{align*}
  Here the index families $\F$ and $\E$ are given by
  \begin{align*}
    F_{\lcfp} &= \left\{ (j  +\frac{1}{2}, l) : j,l\in \naturals, l\leq j \right\} \\
    F_{\lcfm} &= \left\{ (j + \frac{1}{2} - n, l) : j, l\in \naturals, l\leq j \right\} \\
    E_{\lcfp,L} &= \left\{ (j +\frac{1}{2}, l) : j,l\in \naturals, l\leq j\right\} \\
    E_{\lcfm,L} &=\left\{ (j + \frac{1}{2} - n -1, l) : j, l\in \naturals, l\leq j\right\}  \\
    E_{\lcfp,R} &= \left\{ (j +\frac{1}{2} - 1, l) : j,l\in \naturals, l\leq j\right\} \\
    E_{\lcfm,R} &= \left\{ (j + \frac{1}{2} - n, l) : j, l\in \naturals, l\leq j\right\}.
  \end{align*}
\end{prop}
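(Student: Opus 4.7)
My plan is to leverage the construction just carried out in Section~\ref{sec:transport-equation}, together with the symbol-calculus argument used at the end of Section~\ref{sec:diagonal-singularity}, to produce a distribution that simultaneously handles both the left and right transport problems. Starting with the $u$ produced by the preceding proposition (which satisfies $P(\lambda) u - r \in r_{\frontface}^{p}\phg{\F_{1}}{\dblspacet}{\lcfp}$), I would verify that the \emph{same} $u$ is already a right parametrix modulo a polyhomogeneous error at $\lcfm$. The mechanism is the same as in the diagonal lemma: since the identity kernel is annihilated by $P_{L}-P_{R}^{t}$, and since $P_{L}$ and $P_{R}^{t}$ commute, applying $P_{L}-P_{R}^{t}$ to $u$ gives a distribution whose principal symbol on $\Lambda_{1}$ is transported by $H_{L}$ from zero initial data at $\pd\Lambda_{1}$. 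Thus the symbols on $\Lambda_{1}$ forced by the left transport equations automatically satisfy the right transport equations.

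More concretely, the iterative procedure in Section~\ref{sec:transport-equation} solved the transport equations coming from the first formula in Lemma~\ref{lem:TE-apply-op-to-ansatz-result}; I would now read off the corresponding first-order equations coming from the first identity in Lemma~\ref{lem:full-action-on-conormal} for $P(\lambda)^{t}_{R}$. The commutativity $[P_{L},P_{R}^{t}]=0$ together with the vanishing of the symbol of $(P_{L}-P_{R}^{t})u$ forces, at each order, that the left-transport solution $\tilde{a}_{j}$ also solves the right transport equation, so no further adjustment of $u$ is needed. The symbol calculus then upgrades the formal identity to the statement that $P(\lambda)^{t}_{R} u - r$ is polyhomogeneous at the light cone faces, rather than merely conormal.

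The precise index sets $F_{\lcfp}, F_{\lcfm}, E_{\lcfp,L}, E_{\lcfm,L}, E_{\lcfp,R}, E_{\lcfm,R}$ are extracted from the explicit coefficients in Lemma~\ref{lem:full-action-on-conormal}. The $+\tfrac{1}{2}$ offsets are the standard $j+\tfrac{1}{2}$ pattern from Lemma~\ref{lem:TE-sols-are-phg} applied with $k=0$. The $-n$ shift at $\lcfm$ records the fixed density normalization $\nu$ used throughout (coming from the $\xt^{-n/2}$ in the identity kernel and the half-density lift of $\hd(\dblzero)$ to $\dblspacet$). The extra $-1$ appearing in $E_{\lcfp,R}$ versus $F_{\lcfp}$ and in $E_{\lcfm,L}$ versus $F_{\lcfm}$ is the manifestation of the $\frac{1}{\xt}$ prefactor in the second and fourth identities of Lemma~\ref{lem:full-action-on-conormal} (and its analog $\frac{1}{x}$): when the right operator $P(\lambda)^{t}_{R}$ is applied near the opposite side face, it carries an extra singular factor along the orthogonal defining function, which shifts the leading exponent by one.

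The main obstacle I anticipate is neither the existence of $u$ nor the symbol argument, but bookkeeping the four index families consistently near the corner $\lcfp\cap\lcfm$ and tracking the distinct contributions of $P_{L}$ and $P_{R}^{t}$ there. In particular, one has to check that the same $u$ chosen to make the symbol vanish for $P_{L}$ near $\lcfp$ also makes the symbol vanish for $P_{R}^{t}$ near $\lcfm$, even though the defining functions used in the model forms of Lemma~\ref{lem:full-action-on-conormal} differ in each region. This amounts to verifying compatibility of the coordinate changes $s\leftrightarrow\st$ and the density trivialization across the two regions, which is routine but must be done carefully. Once this is in place, asymptotic summation (as in Lemma~\ref{lem:TE-asymp-sum}) and support-in-the-forward-light-cone (via Paley--Wiener) conclude the proof.
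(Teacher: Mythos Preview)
Your proposal is correct and follows essentially the same route as the paper: build $u$ by solving the left transport equations as in the preceding proposition, then invoke the symbol-calculus/commutator argument from the end of Section~\ref{sec:diagonal-singularity} to conclude that the same symbols automatically satisfy the right transport equations, and finally read off the index sets from Lemma~\ref{lem:full-action-on-conormal} (with the $-1$ shifts coming from the $\tfrac{1}{x}$, $\tfrac{1}{\xt}$ prefactors there and the $-n$ at the ``minus'' faces from the transpose/density convention). The paper's own justification is exactly this, stated more tersely.
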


Note that the decrease of $-1$ for two of the error terms in the above
proposition come from the factors of $\frac{1}{x}$ in the expression
given in Lemma~\ref{lem:full-action-on-conormal}. The $-n$ on the
``minus'' faces comes from taking the transpose of our operator. This
is because we are using sections of the standard half-density bundle
rather than the $0$-half-density bundle.

\section{The light cone face}
\label{sec:light-cone-face}

We now wish to solve away the error from
Proposition~\ref{prop:TE-conormal-error}. We aim to solve away the
left error at $\lcfp$ and the right error at $\lcfm$. We show only the
left calculation here and observe that the right calculation is nearly
identical.

We again call the error $r$, which is in
$r_{\frontface}^{p}\phgalt{\E_{L}}(\dblspacet)$, i.e., near $\ffp$, $r$
has an expansion of the form 
\begin{equation*}
  r \asympto \xt^{p}\sum _{j}^{\infty}\sum _{l=0}^{j}s^{\gamma_{j}}(\log s)^{l}r_{jl}\nu,
\end{equation*}
where $\gamma_{j} = j - k - \frac{1}{4}$. We drop the power of $\xt$
for now because $P(\lambda)$ commutes with $\xt$.

We first claim that this error lifts to be polyhomogeneous on the full
double space $\dblspace$. 
\begin{lem}
  Suppose that $r$ is a polyhomogeneous distribution on $\dblspacet$
  supported in a small neighborhood of $\lightcone$ with index family
  $\F$. Then $r$ lifts to a polyhomogeneous distribution on
  $\dblspace$ with index family $\G$, where $\G$ is given by
  \begin{align*}
    G_{\lcfp} &= F_{\lcfp}, \\
    G_{\lcfm} &= F_{\lcfm}, \\
    G_{\xf} &= F_{\lcfp} + F_{\lcfm}+1.
  \end{align*}
\end{lem}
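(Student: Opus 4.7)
The lemma is local. Since polyhomogeneity with respect to an index family is defined hypersurface by hypersurface, with product-type expansions at corners, the plan is to reduce the statement to local computations at each face of $\dblspace$ and to use the explicit projective coordinates of Section~\ref{sec:another-blow-up}. First I would observe that the blow-down $\beta:\dblspace\to\dblspacet$ is a local diffeomorphism away from $\xf$ (since $\xf$ is the only additional face created by the extra blow-up in passing from $\dblspacet$ to $\dblspace$). Therefore the index sets at $\lcfp$ and $\lcfm$ on $\dblspace$ agree with $F_{\lcfp}$ and $F_{\lcfm}$ everywhere away from $\xf$, and these pieces of the claim follow with no computation.

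The heart of the proof is the local computation near $\xf\cap\lcfp$, which is the preimage under $\beta$ of (part of) the corner $\lcfp\cap\lcfm\subset \dblspacet$. Using coordinates $r_{\xf}=\xt$, $r_{\lcfp}=x/\xt$ on $\dblspace$ near this corner and $r_{\lcfp}=x$, $r_{\lcfm}=\xt$ near the corresponding corner of $\dblspacet$, the blow-down acts by $x=r_{\lcfp}r_{\xf}$ and $\xt=r_{\xf}$. Substituting these into the product-type polyhomogeneous expansion
\begin{equation*}
r \sim \sum _{(s_{1},l_{1})\in F_{\lcfp},\ (s_{2},l_{2})\in F_{\lcfm}} x^{s_{1}}(\log x)^{l_{1}}\xt^{s_{2}}(\log\xt)^{l_{2}} a_{s_{1}l_{1}s_{2}l_{2}}
\end{equation*}
and expanding $(\log r_{\lcfp}+\log r_{\xf})^{l_{1}}$ by the binomial theorem yields a product-type expansion on $\dblspace$ in which the exponent at $\lcfp$ is $s_{1}$ with log power at most $l_{1}$ and the exponent at $\xf$ is $s_{1}+s_{2}$ with log power at most $l_{1}+l_{2}$. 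Because index sets are closed under reduction of log powers, the contribution at $\lcfp$ lies in $F_{\lcfp}$ as claimed.

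The $+1$ shift in $G_{\xf}=F_{\lcfp}+F_{\lcfm}+1$ comes from the Jacobian of the change of variables: treating $r$ as a density-valued distribution, the identity $|dx\wedge d\xt|=r_{\xf}\,|dr_{\lcfp}\wedge dr_{\xf}|$ contributes an extra factor of $r_{\xf}$ that shifts the exponent at $\xf$ by $+1$. A parallel computation near $\xf\cap\lcfm$, using coordinates $r_{\xf}=x$, $r_{\lcfm}=\xt/x$ and $x=r_{\xf}$, $\xt=r_{\xf}r_{\lcfm}$, gives the analogous expansion on the other side of $\xf$; the two local expansions patch because they both arise from pulling back the single expansion of $r$ on $\dblspacet$. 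The main technical difficulty is the careful bookkeeping of half-density versus density conventions relative to the trivializations fixed in Section~\ref{sec:another-blow-up}, ensuring that the Jacobian factor correctly produces the $+1$ shift; once this is arranged, the rest of the argument is direct substitution together with the binomial expansion of the log terms.
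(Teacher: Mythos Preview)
Your proof is correct and carries out explicitly what the paper's two-sentence argument only asserts: that locally the passage from $\dblspacet$ to $\dblspace$ is the blow-up of the corner $\lcfp\cap\lcfm$ (the paper phrases this as ``the two possible orders of the blow-up are locally diffeomorphic near $\lightcone\cap\lcfp\cap\lcfm$''), and that the $+1$ shift at $\xf$ comes from the lifting of half-densities, stated in the paper as $\hd(\dblspacet)\to r_{\xf}\hd(\dblspace)$. Your coordinate substitution and binomial expansion of the log terms are exactly the computation underlying the first point, and you are right to flag the half-density bookkeeping as the place where care is needed.
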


\begin{note}
  Here the notation $F + 1$ is shorthand for
  \begin{equation*}
    (\alpha , l) \in F + 1 \text{ if and only if } (\alpha - 1, l) \in F.
  \end{equation*}
\end{note}

\begin{proof}
  The result follows because the two possible orders of the blow-up
  are locally diffeomorphic near $\lightcone \cap \lcfp\cap \lcfm$.
  The extra $1$ in $G_{\xf}$ is because sections of $\hd(\dblspacet)$
  lift to sections of $r_{\xf}\hd(\dblspace)$. 
\end{proof}

We may thus consider $r$ as a polyhomogeneous function on $\dblspace$.

We now proceed in two steps. The first is to solve the away near
$\lcfp$ and the second is to show that it has the desired form at
$\lfp$. This only away from the scattering face $\xf$, though the
computation near $\xf$ is nearly identical. In this section there are
many terms that come from differentiating our ansatz. We attempt to
indicate the origin of the important terms.

Note that the statement about the support in
Proposition~\ref{prop:TE-conormal-error} means that $r_{jl}$ is
supported on the interior of the light cone and vanishes to infinite
order at $\lightcone$.

Because we are working near $\lcfp$, we first use projective
coordinates $(s,w = \frac{\rho}{s}, \theta)$, where $\rho$ is a
defining function for $\lightcone$ (as above) and $\theta$ are the
remaining variables. In these coordinates, derivatives of the function
$\rho$ appear as coefficients of $\pd[w]$ in our operator
$P(\lambda)$. In particular, $s\pd[s]$ lifts to $s\pd[s] +
(\pd[s]\rho)\pd[w] - w\pd[w]$.

We again expect a polyhomogeneous expansion in $s$. In other words, we
expect an expansion of the form 
\begin{equation}
  \label{eq:LCF-ansatz}
  u \asympto \sum _{j=0}^{\infty}\sum _{l=0}^{j}s^{\gamma_{j}}(\log s)^{l}u_{jl},
\end{equation}
where $u_{jl}$ is regarded as a function of $w$, $\theta$, $\xt$, and
$\yt$.

Write $u_{jl}=\tilde{u}_{jl}\nu$. We again note that we may again take
$r_{\lcfp}$ equivalent to $s$ in this region, giving us an extra
$-\frac{1}{2}s\pd[s] - \frac{1}{2}(n-\frac{1}{2})$ in our operator.
Applying $P(\lambda)$ to our ansatz yields 
\begin{align*}
  P(\lambda) u \asympto &\sum_{j} \sum_{l=0}^{j} s^{\gamma_{j}}(\log
  s)^{l}\left[ -\bar{g}(d\rho, d\rho) - 2w (\pd[s]\rho) +
    w^{2})(\pd[w]^{2}\tilde{u}_{jl}) \right. \\ 
  &\quad - \left((n+\frac{1}{2}-2\gamma_{j})(\pd[s]\rho) -
    (n+\frac{1}{2})w\right)\pd[w]\tilde{u}_{jl} \notag\\ 
  &\quad - \left( (n-1-\gamma_{j})\gamma_{j}+
    \frac{1}{2}(n-\frac{1}{2})-\lambda\right)\tilde{u}_{jl} \notag \\ 
  &\quad \left. + A_{j-2}\tilde{u}_{j-2,l} + B_{j-1}\tilde{u}_{j-1,l}
    + B_{l+1}'\tilde{u}_{j-1,l+1} + X_{jl}\tilde{u}_{j,l+1} +
    Y_{l}\tilde{u}_{j,l+2}\right]\nu, \notag 
\end{align*}
where
\begin{align*}
  A_{j+2} &= -\sum _{q}(\lap _{h}\theta _{q})\pd[\theta_{q}] - \sum
  _{q,r,i,k}\left( \frac{\pd \theta_{r}}{\pd
      z_{i}}\right)\left(\frac{\pd\theta_{q}}{\pd
      z_{k}}\right)\pd[\theta_{r}]\pd[\theta_{q}], \quad B_{l+1}' =
  \frac{\xt \pd[s]\sqrt{h}}{\sqrt{h}}(l+1),\\ 
  B_{j+1} &= -\sum_{i,k,r}2h^{ik}\left(\frac{\pd\theta_{r}}{\pd z_{i}}\right) \left( \frac{\pd\rho}{\pd z_{k}}\right) \pd[\theta_{r}]\pd[w] + (\lap _{h}\rho)\pd[w] + (\pd[s]^{2}\rho)\pd[w], \\
  X_{jl} &= 2(l+1)(\pd[s]\rho - w)\pd[w] -
  (l+1)(n-\frac{1}{2}-2\gamma_{j}), \quad Y_{l} = (l+1)(l+2). 
\end{align*}

The constants above come from the operator and from $s\pd[s]$ landing
on the powers of $s$. The leading $\pd[w]$ terms come from the
$(\pd[s]\rho)\pd[w]$ terms when we lift $s\pd[s]$ and the
$h^{kl}(\pd[z_{k}]\rho)(\pd[z_{l}]\rho)\pd[w]^{2}$ term in the lift of
the Laplacian in $z$.

Note that because $\lightcone$ is characteristic for $\Box$, we must
have that $\bar{g}(d\rho, d\rho) = \rho b$. $\rho = sw$, so by
replacing $B_{j+1}$ with $\tilde{B}_{j+1} = B_{j+1} - wb\pd[w]^{2}$,
we may write 
\begin{align*}
  P(\lambda) u \asympto &\sum_{j} \sum_{l=0}^{j} s^{\gamma_{j}}(\log
  s)^{l}\left[ Q_{j}\tilde{u}_{jl} + A_{j-2}\tilde{u}_{j-2,l} +
    \tilde{B}_{j-1}\tilde{u}_{j-1,l} \right.\\ 
  &\quad \left.+ B_{l+1}'\tilde{u}_{j-1,l+1} + X_{jl}\tilde{u}_{j,l+1}
    + Y_{l}\tilde{u}_{j,l+2}\right]\nu, \notag 
\end{align*}
where
\begin{align*}
  Q_{j} = &\left( -(\pd[s]\rho) + w\right)w\pd[w]^{2} - \left(
    (\pd[s]\rho)(n+\frac{1}{2}-2\gamma_{j}) -
    (n+\frac{1}{2})w\right)\pd[w] \\ 
  &- (n-1-\gamma_{j})\gamma_{j} - \frac{1}{2}(n-\frac{1}{2})-\lambda . 
\end{align*}

We thus wish to solve a sequence of inhomogeneous ordinary
differential equations $Q_{j}\tilde{u}_{jl} = \tilde{r}_{jl}$, where
\begin{equation*}
  \tilde{r_{jl}} = r_{jl} - X_{jl}\tilde{u}_{j,l+1} -
  Y_{l}\tilde{u}_{j,l+2} - A_{j-2}\tilde{u}_{j-2,l} -
  \tilde{B}_{j-1}\tilde{u}_{j-1,l} - B_{l+1}'\tilde{u}_{j-1,l+1}. 
\end{equation*}
where all terms are supported in $\{ w \geq 0\}$, vanishing to
infinite order at $w=0$.  (We know already that $r_{jl}$ has this
property, and we show at each step that $\tilde{u}_{jl}$ does as
well.)

It is clear that $0$ is a regular singular point of the differential
operator $Q_{j}$, so the solutions of $Q_{j}v = 0$ have formal power
series expansions at $w=0$ with first term given by $w^{\mu_{i}}$.
Here $\mu_{i}$ are the roots of the indicial equation $-\mu (\mu -1) -
(n+\frac{1}{2}-2\gamma_{j})\mu =0$ (see, for example,
\cite{birkhoff-rota:1989}), i.e.,
\begin{equation*}
  \mu _{1} = 0, \quad \mu _{2} = 2j-2k-n
\end{equation*}
Here we have used that $\gamma_{j} = j-k+\frac{1}{2}$.  Standard ODE
techniques (i.e., variation of parameters) then give us solutions
$\tilde{u}_{jl}$ to $Q_{j}\tilde{u}_{jl} = \tilde{r}_{jl}$ in terms of
a basis of solutions for $Q_{j} v = 0$.  Moreover, because
$\tilde{r}_{jl}$ vanishes to all orders at $w=0$, we may also
guarantee that $\tilde{u}_{jl}$ is supported in $\{ w \geq 0\}$,
vanishing to all orders at $w=0$.  Indeed, if $v_{1}$ and $v_{2}$ are
a basis for the solutions of $Q_{j}v=0$, then
\begin{equation*}
  \tilde{u}_{jl}(w) = -v_{1}(w)\int
  _{0}^{w}\frac{v_{2}(w')\tilde{r}_{jl}(w')}{W(v_{1},v_{2})(w')}\dw' +
  v_{2}(w)\int_{0}^{w}\frac{v_{1}(w')\tilde{r}_{jl}(w')}{W(v_{1},v_{2})(w')}\dw' .
\end{equation*}
Because $\tilde{r}_{jl}$ vanishes to all orders at $w=0$, (and $v_{i}$
are bounded by $w^{-N}$ for some $N$), the integrals make sense and
vanish to all orders at $w=0$. Multiplication by $v_{i}$ then
preserves this property.

We may thus solve these equations and now wish to consider their
asymptotics near the corner $\lcfp\cap \lfp$. Near this corner, the
coordinates $(s,w, \theta)$ are invalid and so we must use the other
set of projective coordinates $(\rho = sw, W = w^{-1}, \theta)$. In
these coordinates, $s\pd[s]$ lifts to $(\pd[s]\rho)\pd[\rho] + W\pd[W]
- (\pd[s]\rho)W^{2}\pd[W]$. The top order terms below then come from
the $W\pd[W]$ and the constants come only from the operator.

Because polyhomogeneous distributions are independent of our choice of
coordinate systems, we may also express our ansatz
(\ref{eq:LCF-ansatz}) in terms of these coordinates $(\rho, W,
\theta)$. In this case, expansions in $s$ are equivalent to expansions
in $\rho$, so our ansatz here has the form 
\begin{equation*}
  u \asympto \sum _{j}\sum _{l=0}^{j} \rho ^{\gamma_{j}}(\log \rho)^{\ell}v_{jl}.
\end{equation*}
In the computation that follows, the important point is that
$s_{\pm}(\lambda)$ are the indicial roots of our operator
$P(\lambda)$, and that this behavior is dominant away from
$\lightcone$ because the fundamental solution is a smooth solution of
the homogeneous equation here.

Apply $P(\lambda)$ to such an ansatz to see
\begin{align*}
  P(\lambda) u \asympto &\sum_{(j,l)\in \F_{1}} \rho ^{j}(\log
  \rho)^{l}\left[(1 - q_{2}W)W^{2}\pd[W]^{2} \tilde{v}_{jl} \right. \\ 
  &\quad - (n-2 + q_{1}W)W\pd[W]\tilde{v}_{jl} - (\lambda +
  q_{0}W)\tilde{v}_{jl} \notag \\ 
  &\quad + WA_{j-1,l}\tilde{v}_{j-1,l} +
  WA_{j-1,l+1}'\tilde{v}_{j-1,l+1} \notag \\ 
  &\quad \left. +WB_{j-2,l}\tilde{v}_{j-2,l} +
    WB_{j-2,l+1}'\tilde{v}_{j-2,l+1} +
    WB_{j-2,l+2}''\tilde{v}_{j-2,l+2}\right], \notag 
\end{align*}
where we have (via a similar, but more tedious, calculation)
\begin{align*}
  q_{2} &= 2(\pd[s]\rho) - (\pd[s]\rho)^{2}W + |d_{z}\rho|^{2}_{h}W, \\
  q_{1} &= -(n+2\gamma_{j} - 4)(\pd[s]\rho) +
  (2\gamma_{j}-2)\bar{g}(d\rho, d\rho)W, \\ 
  q_{0} &= (n-2)\gamma_{j}(\pd[s]\rho) -
  \gamma_{j}(\gamma_{j}-1)\bar{g}(d\rho, d\rho)W, 
\end{align*}
while 
\begin{align*}
  &A_{j-1,l} = 2h^{ik}(\pd[z_{i}]\theta
  _{p})(\pd[z_{k}]\rho)W^{2}\pd[\theta_{p}]\pd[W] -
  2\gamma_{j}h^{ik}(\pd[z_{i}]\rho)(\pd[z_{k}]\theta_{p})W\pd[\theta_{p}]
  \\ 
  &\quad \quad \quad - (\Box _{\bar{g}}\rho)W^{2}\pd[W] -
  \gamma_{j}(\Box_{\bar{g}}\rho)W, \\ 
  &A_{j-1,l+1}'= (l+1)\big[\left( -2(\pd[s]\rho) + 2\bar{g}(d\rho,
    d\rho)\right)W\pd[W] \\ 
  &\quad\quad\quad - (\pd[s]\rho)(n-2) - (2\gamma_{j}-2)\bar{g}(d\rho,
  d\rho)W\big] , 
\end{align*}
and
\begin{align*}
  &B_{j-2,l}= (\lap _{h}\theta _{p})W\pd[\theta _{p}] -
  h^{ik}(\pd[z_{i}]\theta _{p})(\pd[z_{k}]\theta _{q})W\pd[\theta
  _{p}]\pd[\theta_{q}] \\ 
  &B_{j-2,l+1}' =
  -2(l+1)h^{ik}(\pd[z_{k}]\rho)(\pd[z_{i}]\theta_{p})W\pd[\theta_{p}]
  + (l+1)(\Box_{\bar{g}}\rho)W \\ 
  &B_{j-2,l+2}'' = -(l+1)(l+2)\bar{g}(d\rho, d\rho)W. 
\end{align*}

We must still take into account the density factor here. In this
region, $r_{\lcfp}$ is equivalent to $\rho$. A similar computation to
the one above shows that this leads to two types of terms. One of
these is $O(W)W\pd[W]$, while the other is $O(W)$. This means that
they may be absorbed into $q_{1}$ and $q_{0}$. Let us now write
\begin{equation*}
  Q_{j} = (1 - q_{2}W)W^{2}\pd[W]^{2} - (n-2+q_{1}W)W\pd[W] - (\lambda
  + q_{0}W). 
\end{equation*}
We wish to solve a sequence of transport equations (which are really
the same equations as above written in these coordinates) given by
\begin{align*}
  Q_{j}\tilde{v}_{jl} &= \tilde{r}_{jl}- WA_{j-1,l}\tilde{v}_{j-1,l} -
  WA_{j-1,l+1}'\tilde{v}_{j-1,l+1} -WB_{j-2,l}\tilde{v}_{j-2,l} \\ 
  &\quad - WB_{j-2,l+1}'\tilde{v}_{j-2,l+1} -
  WB_{j-2,l+2}''\tilde{v}_{j-2,l+2} = \tilde{r}_{jl}'. 
\end{align*}

The solutions of these transport equations are polyhomogeneous conormal
functions. Here the indicial roots of $Q_{j}$ are $s_{\pm}(\lambda) =
\frac{n-1}{2} \pm \sqrt{\frac{(n-1)^{2}}{4}+\lambda}$. More precisely,
we have 
\begin{lem}
  \label{lem:LCF-transport-eqn-solution-is-phg}
  Suppose that $u_{jl}$ solves $Q_{j}u_{jl} = \tilde{r}_{jl}'$ as
  above. Suppose first that $s_{+}(\lambda) - s_{-}(\lambda) \notin
  \integers$. Suppose that $\tilde{r}_{jl}'$ is polyhomogeneous
  conormal at $\lfp$ with index set $F + 1 = (F^{+} + 1) \cup
  (F^{-}+1)$, where
  \begin{equation*}
    F^{\pm} = \{ (s_{\pm}(\lambda) + m, 0 ) : m\in \naturals , \} ,
  \end{equation*}
  then $u_{jl}$ is polyhomogeneous with index set $F = F^{+}\cup
  F^{-}$.
  
  If $s_{+} (\lambda) - s_{-}(\lambda) = N\in \integers$, then if
  $\tilde{r}_{jl}'$ is polyhomogeneous with index set $\tilde{F} + 1$,
  where
  \begin{equation*}
    \tilde{F} = \{ (s_{-}(\lambda) + m , l ) : m\in \naturals , l=0
    \text{ for }l<N, l=1 \text{ for } l\geq N\}, 
  \end{equation*}
  then $u_{jl}$ must be polyhomogeneous with index set $\tilde{\F}$.
\end{lem}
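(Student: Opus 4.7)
The operator $Q_{j}$ is a regular-singular ordinary differential equation in $W$, with the remaining variables $(\theta,\xt,\yt)$ entering smoothly as parameters. Its principal part at $W=0$ is the Euler operator $W^{2}\pd[W]^{2} - (n-2)W\pd[W] - \lambda$, whose indicial polynomial
\begin{equation*}
  I(\mu) = \mu^{2} - (n-1)\mu - \lambda
\end{equation*}
has roots precisely $s_{\pm}(\lambda)$; the remaining pieces of $Q_{j}$ (the $q_{0}W$, $q_{1}W\cdot W\pd[W]$, and $q_{2}W\cdot W^{2}\pd[W]^{2}$ terms) each carry an extra factor of $W$, so they shift formal exponents up by one. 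The plan is to construct $u_{jl}$ as a Borel-summed Frobenius series in $W$ and read off its index set, and then compare any two solutions modulo $\ker Q_{j}$.

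In the non-coincidence case $s_{+}(\lambda)-s_{-}(\lambda)\notin\integers$ I would posit the ansatz
\begin{equation*}
  u_{jl} \sim \sum_{m\geq 0}\Big( W^{s_{+}(\lambda)+m}A_{m}^{+} + W^{s_{-}(\lambda)+m}A_{m}^{-}\Big),
\end{equation*}
with the coefficients $A_{m}^{\pm}$ smooth in the transverse variables. Substituting into $Q_{j}u_{jl}=\tilde{r}_{jl}'$ and matching the coefficient of $W^{s_{\pm}(\lambda)+m}$, the leading contribution is $I(s_{\pm}(\lambda)+m)A_{m}^{\pm}$ while the rest involves $A_{m'}^{\pm}$ with $m'<m$ together with the known polyhomogeneous expansion of $\tilde{r}_{jl}'$, whose index set is $F+1$. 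Under the non-coincidence hypothesis $I(s_{\pm}(\lambda)+m)\neq 0$ for every $m\geq 1$, so the recursion determines $A_{m}^{\pm}$ uniquely ($A_{0}^{\pm}$ remaining free). Borel summation in $W$, uniform in the parameters, gives a genuine polyhomogeneous function with index set $F$ whose image under $Q_{j}$ equals $\tilde{r}_{jl}'$ modulo a remainder vanishing to all orders at $W=0$. Variation of parameters against the Frobenius basis $W^{s_{\pm}(\lambda)}(1+O(W))$ of $\ker Q_{j}$ absorbs the residual $O(W^{\infty})$ error, and any two solutions of $Q_{j}u_{jl}=\tilde{r}_{jl}'$ differ by an element of $\ker Q_{j}$, whose asymptotics again lie in $F$.

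The integer coincidence case $s_{+}(\lambda)-s_{-}(\lambda)=N\in\integers$ is the main obstacle. At order $m=N$ on the $(-)$-branch, $I(s_{-}(\lambda)+N)=I(s_{+}(\lambda))=0$ and the recursion is obstructed unless one introduces a factor of $\log W$. The standard Frobenius remedy enlarges the ansatz to permit a single logarithmic factor starting at the coinciding exponent; after reindexing this gives exactly the index set $\tilde{F}$ of the statement. Applying $Q_{j}$ to $W^{s_{+}(\lambda)}\log W$ produces an additional contribution proportional to $I'(s_{+}(\lambda))=2s_{+}(\lambda)-(n-1)=s_{+}(\lambda)-s_{-}(\lambda)=N$, which (away from the $s_{+}=s_{-}$ degeneracy, treated by the same device with a double log) is nonzero, so the log coefficient can be chosen to cancel the obstruction. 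The remainder of the construction (Borel summation, correction via variation of parameters, and uniqueness modulo $\ker Q_{j}$) then proceeds exactly as before, yielding a polyhomogeneous solution with index set $\tilde{F}$.
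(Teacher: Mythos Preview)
Your proposal is correct and follows the same route as the paper: a Frobenius-type formal series construction for the regular-singular operator $Q_{j}$, with Borel summation and a log correction at integer coincidences. The paper's own proof is terse---it simply says to construct a formal power series solution (or invoke Lemma~5.44 of Melrose's b-calculus book) and notes that the lower-order terms carry an extra factor of $W$---so your write-up is in fact a more explicit version of the same argument.
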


\begin{note}
  Note that the index set above means that if $s_{+}(\lambda) -
  s_{-}(\lambda) \notin \integers$, then $u_{jl}\in
  W^{s_{+}(\lambda)}C^{\infty} + W^{s_{-}(\lambda)}C^{\infty}$. If
  $s_{+}(\lambda) - s_{-}(\lambda) \in \integers$, then
  \begin{equation*}
    u_{jl} - \sum _{m=0}^{s_{+}(\lambda) - s_{-}(\lambda)
      -1}W^{s_{-}(\lambda) + m}u_{jl}^{(m)} \in (1+\log W) C^{\infty}. 
  \end{equation*}
\end{note}

\begin{proof}
  We may again construct a formal power series solution (or apply
  Lemma~5.44 of Melrose \cite{melrose:1993}). We omit this here
  because it has been described in detail already. The key point is
  that the operators $A_{j-1,l}$, $A_{j-1,l+1}'$, $B_{j-2,l}$,
  $B_{j-2,l+1}'$, and $B_{j-2,l+2}''$ are all elements of
  $W\Diff[*]_{b}$ (products of $W$ and differential operators tangent
  to the boundary).  This ensures that the terms we are solving away
  vanish to one order better at $W= 0$.
\end{proof}

Asymptotically summing $\sum \sum \rho ^{\nu _{j}}(\log \rho
)^{l}u_{jl}$ then solves away the error $r$ at $\lcfp$. Note that
because $s_{\pm}(\lambda)$ are the indicial roots of $N(P(\lambda))$,
we in fact have that the error term vanishes to one order better.

When we are constructing a parametrix for the fundamental solution, we
may perform the same construction on the right and the left. We may
also add a distribution solving away the right error at $\lcfm$.

Remembering our factors of $r_{\frontface}$ now, we have now proved
the following proposition. 
\begin{prop}
  \label{prop:LCF-solution-at-newface}
  Given $r_{1}\in r_{\frontface}^{-n/2}\phgalt{\E_{L}}(\dblspace)$ and
  $r_{2}\in r_{\frontface}^{-n/2}\phgalt{E_{R}}(\dblspace)$ as above,
  we may find a smooth function $u\in
  r_{\frontface}^{-n/2}\phgalt{\F}(\dblspace)$ vanishing outside the
  light cone such that $P(\lambda)u - r_{1}$ vanishes to all orders at
  $\lcfp$ and is polyhomogeneous with index family $\G_{1}$, while
  $P(\lambda)^{t}_{R}u - r_{2}$ vanishes to all orders at $\lcfm$ and
  is polyhomogeneous with index set $\G_{2}$. Here we have that
  \begin{align}
    \label{eq:LCF-full-index-set}
    &F_{\lfp} = G_{\lfp,2} = \{ (s_{\pm}(\lambda) + m , 0 ) : m\in
    \naturals \},& &F_{\lcfp} = E_{\lcfp,L},\\ 
    &F_{\rfp} = G_{\rfp, 1}= \{ (-n + s_{\pm}(\lambda) + m, 0) : m\in
    \naturals\},& &F_{\lcfm} = E_{\lcfm,R}, \notag\\ 
    &G_{\lcfm,1} = F_{\lcfm} - 1, & & G_{\lfp,1} = F_{\lfp} + 1,
    \notag\\ 
    &G_{\lcfp,2} = F_{\lcfp} - 1, & & G_{\rfp,2} = F_{\rfp} + 1,
    \notag\\ 
    &G_{\lcfp, 1} = G_{\lcfm, 2} = E_{\lfp,L} = E_{\rfm, L} =
    E_{\lfp,R} = E_{\rfp,R} = \emptyset, & & \notag\\ 
    &F_{\xf} = E_{\xf, L} = E_{\xf, R} = G_{\xf,1} = G_{\xf, 2} = \{
    (-n+j, l) : l \leq j\}, & & \notag 
  \end{align}
  if $s_{+}(\lambda) - s_{-}(\lambda) \notin \integers$. If $s_{+} -
  s_{-}(\lambda) = N \in \integers$, then the index sets become
  \begin{align*}
    &\tilde{F}_{\lfp} = \{ (s_{-}(\lambda) + m , l ) : m \in
    \naturals, l = 0 \text{ for }m< N, l= 1\text{ for }m \geq N\} ,\\ 
    &\tilde{F}_{\rfp} = \{ (-n+s_{-}(\lambda) + m, l ) : m \in
    \naturals, l = 0 \text{ for }m< N, l= 1\text{ for }m \geq N\} , 
  \end{align*}
  with corresponding changes for $G_{\lfp}$ and $G_{\rfp}$.
\end{prop}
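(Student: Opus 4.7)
The plan is to construct $u$ in two symmetric pieces — one that solves the left equation modulo smooth error at $\lcfp$, and one that solves the right equation modulo smooth error at $\lcfm$ — and then to track the polyhomogeneous expansions at every boundary hypersurface through the construction. The computation with $P(\lambda)^{t}_{R}$ at $\lcfm$ is formally identical to the one with $P(\lambda)$ at $\lcfp$ after interchanging the roles of $(x,y)$ and $(\xt,\yt)$, so I focus on the left problem; adding the two distributions yields the final $u$.

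Working in projective coordinates $(s,w=\rho/s,\theta,\xt,\yt)$ valid on the interior of $\lcfp$, expand $r_{1}$ as $\xt^{-n/2}\sum s^{\gamma_{j}}(\log s)^{l}r_{jl}\nu$ indexed by $\F_{\lcfp}$, and posit the ansatz
\begin{equation*}
u \sim \xt^{-n/2}\sum_{j}\sum_{l=0}^{j}s^{\gamma_{j}}(\log s)^{l}u_{jl}(w,\theta,\xt,\yt).
\end{equation*}
Applying $P(\lambda)$ and matching orders gives, for each pair $(j,l)$, an inhomogeneous ODE $Q_{j}u_{jl}=\tilde r_{jl}$, where $\tilde r_{jl}$ packages $r_{jl}$ together with the operators $A_{j-2}$, $\tilde B_{j-1}$, $B'_{l+1}$, $X_{jl}$, $Y_{l}$ acting on previously solved coefficients. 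The operator $Q_{j}$ is regular singular at $w=0$ with indicial roots $0$ and $2j-2k-n$; since each $\tilde r_{jl}$ is supported in $\{w\geq 0\}$ and vanishes to all orders at $w=0$, variation of parameters produces a solution $u_{jl}$ with the same flat behaviour at $w=0$, as displayed earlier in the section.

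To identify the asymptotic behaviour at $\lfp$, pass to the coordinates $(\rho=sw,W=w^{-1},\theta)$ valid near $\lcfp\cap\lfp$ and re-expand the same ansatz in powers of $\rho$. In these coordinates $Q_{j}$ converts into an ODE in $W$ whose indicial equation has roots exactly $s_{\pm}(\lambda)$ — these are the indicial roots of $N(P(\lambda))$ at $\lfp$, which is why they propagate into the expansion of $u$ at the side face. Applying Lemma~\ref{lem:LCF-transport-eqn-solution-is-phg} term by term delivers polyhomogeneity of $u_{jl}$ at $\lfp$ with index set $\F_{\lfp}$, or the logarithmic replacement $\tilde\F_{\lfp}$ in the integer-coincidence case; a Borel sum in $j$ produces an honest polyhomogeneous distribution on $\dblspace$ whose support lies in the closed interior of the light cone. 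The index set at $\xf$ follows from the lifting lemma at the beginning of the section, since $\F_{\xf}=\F_{\lcfp}+\F_{\lcfm}+1$.

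For the right construction one repeats the argument with $P(\lambda)^{t}_{R}$, $\st$, and $\zt$ in place of $P(\lambda)$, $s$, and $z$, producing a second distribution with an analogous expansion at $\lcfm$ and $\rfp$. Summing the two distributions, the error $P(\lambda)u-r_{1}$ inherits expansions at the remaining faces from $u$ itself, with the shifts by $\pm 1$ in the index families $\G_{1}$, $\G_{2}$ coming from the factors of $1/x$ or $1/\xt$ isolated in Lemma~\ref{lem:full-action-on-conormal} and from the commutator of $P(\lambda)$ with the trivialization of $\hd(\dblspace)$. The main obstacle is bookkeeping: simultaneously tracking six or more index families through two coordinate systems, two separate (left/right) constructions, and the integer-coincidence branch where the indicial equation at $\lfp$ forces logarithms, is what requires all the care; the Frobenius-type ODE analysis and the Borel summation are otherwise routine.
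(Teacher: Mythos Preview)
Your proposal is correct and follows essentially the same route as the paper: the argument in Section~\ref{sec:light-cone-face} likewise builds $u$ as the sum of a left piece at $\lcfp$ and a right piece at $\lcfm$, solves the regular-singular ODEs $Q_{j}u_{jl}=\tilde r_{jl}$ by variation of parameters in the $(s,w,\theta)$ chart, then switches to the $(\rho,W,\theta)$ chart and invokes Lemma~\ref{lem:LCF-transport-eqn-solution-is-phg} to read off the $s_{\pm}(\lambda)$ indicial behavior at $\lfp$, Borel-sums, and finally tracks the index-set shifts via Lemma~\ref{lem:full-action-on-conormal} and the lifting lemma for $\xf$. The only point you leave implicit that the paper states explicitly is that the error gains one extra order at $\lfp$ precisely because $s_{\pm}(\lambda)$ are the indicial roots of $N(P(\lambda))$, which is why $G_{\lfp,1}=F_{\lfp}+1$ rather than $F_{\lfp}$.
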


\section{The front face}
\label{sec:front-face}

We now wish to solve away the errors on the front face, which we again
call $r$, from the previous step. We show how to solve away the error
term at $\ffp$ for the operator acting on the left, and the
corresponding calculation at $\ffm$ for the operator acting on the
right is nearly identical. We now suppose that $r$ is the error term
from Proposition~\ref{prop:LCF-solution-at-newface} for the operator
acting on the left.

Because $r$ vanishes to all orders at $\lcfp$, we may blow down
$\lcfp$ to solve away $r$. In this view, $\xt^{-p}r$ is smooth on
$\ffp$, supported inside the light cone, and has an expansion at
$\lfp$ of the form 
\begin{equation*}
  r\asympto \xt^{p}\sum _{(\alpha _{j},l)\in \G} s^{\alpha _{j}}(\log s )^{l}r_{jl}\mu.
\end{equation*}
The generic case here is that $l=0$ when $s_{+}(\lambda) -
s_{-}(\lambda) \notin \integers$. We again drop the powers of $\xt$
from our notation.

We wish to solve this error away with a function of the same form:
\begin{equation*}
  u \asympto \sum _{(\alpha_{j},l)\in \mathcal{G}}s^{\alpha_{j}}(\log s)^{l}u_{jl}\mu .
\end{equation*}
Applying $N(P(\lambda))$ to such an ansatz yields
\begin{align}
  \label{eq:FF-apply-to-ansatz}
  N(P(\lambda)) u \asympto &\sum _{j}\sum _{l}s^{\alpha_{j}}(\log
  s)^{l}\left[ \alpha_{j}^{2}u_{jl} + 2\alpha_{j}(l+1)u_{j,l+1} +
    (l+1)(l+2)u_{j,l+2} \right. \\ 
  &\quad \left.- (n-1)\alpha_{j}u_{jl} - (n-1)(l+1)u_{j,l+1} + \lap
    _{z}u_{j-2,l} - \lambda u_{jl}\right] \nu\notag\\ 
  = &\sum_{j} \sum_{l} s^{\alpha_{j}}(\log s)^{l}\left[
    -\left((n-1-\alpha_{j})\alpha_{j} + \lambda\right) u_{jl} - (n-1
    -2\alpha_{j})(l+1)u_{j,l+1} \right. \notag\\ 
  &\quad \left.+ (l+1)(l+2)u_{j,l+2} + \lap _{z}u_{j-2,l} \right]\mu
  \notag. 
\end{align}
The coefficient of $u_{jl}$ in this expression vanishes precisely when
$\alpha_{j} = s_{\pm}(\lambda)$, so we may solve away the error to all
orders at $\lfp$ because the expansions of $r$ begin at
$s_{\pm}(\lambda) + 1$.

\begin{prop}
  \label{prop:FF-solve-at-leftface}
  For $s_{+}(\lambda) - s_{-}(\lambda) \notin \integers$, there is a
  smooth function $u$ on $\ffp$, polyhomogeneous at $\lfp$ with index
  set $F_{\lfp}$, where $F_{\lfp}$ is defined in
  equation~(\ref{eq:LCF-full-index-set}), such that $N(P(\lambda)) u -
  \xt^{-p}r$ vanishes to all orders at $\lfp\cap \ffp$. An identical
  statement holds when $s_{+}(\lambda)-s_{-}(\lambda)\in \integers$,
  with $F_{\lfp}$ replaced by $\tilde{F_{\lfp}}$. 
\end{prop}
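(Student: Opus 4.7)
The plan is to solve the recursive system produced by (\ref{eq:FF-apply-to-ansatz}) coefficient by coefficient. After substituting an ansatz $u \asympto \sum s^{\alpha_j}(\log s)^{l} u_{jl}\mu$ with $(\alpha_j, l)$ ranging over $F_{\lfp}$, the equation $N(P(\lambda)) u = \xt^{-p} r$ becomes a family of identities, one at each $(\alpha_j, l)$. The key structural feature is that the indicial polynomial $\alpha \mapsto (n-1-\alpha)\alpha + \lambda$ factors as $-(\alpha - s_{+}(\lambda))(\alpha - s_{-}(\lambda))$, so the coefficient multiplying $u_{jl}$ in the identity at exponent $\alpha_j$ vanishes precisely when $\alpha_j \in \{ s_{-}(\lambda), s_{+}(\lambda)\}$. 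Writing every exponent in the index set as $\alpha_j = s_{-}(\lambda) + m$ for $m \in \naturals$, this resonance occurs only at $m = 0$ and, when $N := s_{+}(\lambda) - s_{-}(\lambda) \in \naturals$, at $m = N$.

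In the generic case $s_{+}(\lambda) - s_{-}(\lambda) \notin \integers$, resonance never recurs after the leading order. At each $\alpha_j = s_{\pm}(\lambda) + m$ with $m \geq 1$, I solve algebraically for $u_{j,0}$ in terms of $r_{jl}$ and lower-order coefficients. The lower-order data come from $u_{j',l}$ with $\alpha_{j'} = \alpha_j - 2$ (via the $\lap _{z}$ coupling, which shifts the power of $s$ by $2$) and from $u_{j,l+1}, u_{j,l+2}$ (determined by iterating downward in $l$, which terminates because the initial expansion has only finitely many log powers at each order). The leading coefficients at $m=0$ are homogeneous solutions and may be taken to vanish; the inhomogeneity $r$ contributes only at $\alpha \geq s_{\pm}(\lambda) + 1$ because $G_{\lfp,1} = F_{\lfp} + 1$, so the recursion is well-posed. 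Borel summation then produces a function $u$ on $\ffp$, polyhomogeneous at $\lfp$ with index set $F_{\lfp}$, such that the error vanishes to infinite order at $\lfp\cap\ffp$.

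In the integer-coincidence case $N \in \naturals$, the recursion runs identically for $m < N$. At $m = N$, where $\alpha_j = s_{+}(\lambda)$, the coefficient $-((n-1-\alpha_j)\alpha_j + \lambda)$ vanishes and the algebraic step fails; I then use the next coefficient in (\ref{eq:FF-apply-to-ansatz}), namely $-(n-1-2\alpha_j)(l+1)$ multiplying $u_{j,l+1}$. This quantity is nonzero since $\alpha_j = s_{+}(\lambda) \neq (n-1)/2$ (equivalently, we avoid the exceptional threshold $\lambda = -(n-1)^2/4$ at which $s_{+} = s_{-}$), so the inhomogeneity can be absorbed into a $(\log s)$-term $u_{j,1}$. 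For $m > N$ the indicial coefficient is again nonzero, so no new logarithms are created, but the single log power propagates forward via $\lap _{z}$ coupling; this yields exactly the modified index set $\tilde{F}_{\lfp}$ in which $l = 1$ for $m \geq N$. Borel summation again produces the required $u$, and the same argument with $P(\lambda)^{t}_{R}$ on the right gives the corresponding statement at $\ffm$.

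The main obstacle is the bookkeeping at the resonant order: one must verify that the single logarithm introduced at $m = N$ does not proliferate into higher powers of $\log s$ at later orders. This is controlled by the observation that, once past the resonance, the indicial polynomial has no further zeros along the expansion, so the leading coefficient remains invertible and the iteration stays at $l \leq 1$.
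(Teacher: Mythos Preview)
Your proposal is correct and follows exactly the approach the paper takes: solve the recursion coming from (\ref{eq:FF-apply-to-ansatz}) term by term, using that the indicial factor $-((n-1-\alpha_j)\alpha_j+\lambda)$ is invertible for every $\alpha_j$ in the expansion of $r$ (which begins at $s_{\pm}(\lambda)+1$). The paper's proof is a one-line appeal to that expression; you have simply filled in the bookkeeping, including the handling of the single logarithm at the resonant order when $s_{+}(\lambda)-s_{-}(\lambda)\in\integers$.
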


\begin{proof}
  This is clear from the expression (\ref{eq:FF-apply-to-ansatz}) for
  $N(P(\lambda))u$. 
\end{proof}

On the front face, we are then left with an error term $r_{0}$
vanishing to all orders at $\lfp\cap \ffp$ and supported inside the
light cone. Because $\ffp$ is an asymptotically de Sitter space, we
may now use Corollary~3.6 of Vasy (\cite{vasy:2007}) to find a smooth
function on $\ffp$, vanishing to all orders at $\lfp$, such that
$N(P(\lambda))u = \xt^{-p}r_{0}$.

By iterating this construction (and extending it to the interior of
$\lfp$ as well), we may find $u\in
\xt^{p}\phg{\F_{2}}{\dblzero}{\lfp}$ such that $P(\lambda)u - r$
vanishes to all orders at $\ffp$ and $\lfp$. By gluing two of these
functions together, we may simultaneously solve the left error near
the ``plus'' faces and the right error near the ``minus'' faces. This
proves the following proposition. 
\begin{prop}
  \label{prop:ff-param-at-ff-and-lf} 
  We may solve away the left error
  at $\ffp$ and $\lfp$, and the right error at $\rfm$ and $\ffm$ with
  a function $u \in \phgalt{\mathcal{H}}(\dblspace)$, where
  $\mathcal{H}$ is given by 
  \begin{align*}
    H_{\lcfp} = H_{\lcfm}= \emptyset, 
    H_{*} = F_{*} \text{ for the other index sets}. 
  \end{align*} 
  The remaining error terms are in $\phgalt{\K_{L,R}}(\dblspace)$,
  where $\K_{L}$ and $\K_{R}$ are given by 
  \begin{align*}
    &K_{\lfp, L}= K_{\rfm, R} = K_{\lcfp} = K_{\lcfm} = \emptyset, & &K_{\lfp,R} = F_{\lfp},\\
    &K_{\rfm,L} = F_{\rfm}, & & K_{\xf, L} = K_{\xf,R} = H_{\xf}.
  \end{align*}
\end{prop}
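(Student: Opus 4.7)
The plan is to iterate the front-face construction of Proposition~\ref{prop:FF-solve-at-leftface} to remove the error to all orders at $\ffp$ and $\lfp$ simultaneously, then mirror the construction for the transpose operator at $\ffm$ and $\rfm$, and finally combine the two with a cutoff and track the index sets. Throughout, the errors we are solving away already vanish to all orders at $\lcfp$ (resp.\ $\lcfm$) by Proposition~\ref{prop:LCF-solution-at-newface}, so the blow-up $\lcfp$ is effectively invisible for this step and we may work on $\dblzero$.

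The first step is to apply Proposition~\ref{prop:FF-solve-at-leftface} to produce an approximate solution $u_{0}$ with index set $F_{\lfp}$ (or $\tilde{F}_{\lfp}$ in the integer-coincidence case) on $\ffp$ such that $N(P(\lambda))u_{0} - \xt^{-p}r$ vanishes to all orders at $\ffp\cap\lfp$. The remaining error $r_{0}$ on $\ffp$ is smooth, supported inside the light cone, and vanishes to all orders at $\ffp\cap\lfp$. Since $\ffp$ is itself an asymptotically de Sitter space (with the induced Lorentzian metric on its fibers), Corollary~3.6 of \cite{vasy:2007} furnishes a smooth solution on $\ffp$, still vanishing to all orders at $\lfp$, that removes $r_{0}$ exactly on $\ffp$. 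Together these give an approximation whose total error now vanishes to all orders at $\ffp$.

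Next, I would iterate in the normal direction to $\ffp$, using the short exact sequence that identifies restriction to $\ffp$ with the normal operator: at each stage, Taylor expand the current error to one higher order in a defining function for $\ffp$, apply the two solvability steps above at the new order (the indicial equation is identical), and asymptotically sum. This yields $u_{+}$ supported near $\ffp\cup\lfp$ and polyhomogeneous with the indicated index sets there; by construction the left error is smooth and vanishing to all orders at $\ffp$ and $\lfp$. Performing the analogous construction for $P(\lambda)^{t}_{R}$ near $\ffm$ and $\rfm$ yields a $u_{-}$; adding $u=u_{+}+u_{-}$ and cutting off appropriately away from the opposite faces produces the desired $u\in \phgalt{\mathcal{H}}(\dblspace)$. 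The index-set assignments in $\mathcal{H}$ reflect precisely the indicial roots $s_{\pm}(\lambda)$ at the side faces and the Taylor-series construction at the front faces; $\mathcal{H}_{\lcfp}=\mathcal{H}_{\lcfm}=\emptyset$ because the input errors already vanished to all orders there and this property is preserved.

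The main obstacle is the bookkeeping of index sets on the residual errors $\K_{L}, \K_{R}$. The key observation is that the left parametrix $u_{+}$ was designed only to kill the left error and contributes, via $P(\lambda)^{t}_{R}$, a new error that sits at $\rfm$ with the polyhomogeneous behavior inherited from its own expansion at $\lfp$; and symmetrically for $u_{-}$. Hence $K_{\lfp,L}=K_{\rfm,R}=\emptyset$ by construction, while $K_{\lfp,R}=F_{\lfp}$ and $K_{\rfm,L}=F_{\rfm}$ come from applying the ``wrong'' operator to the piece built for the other side. The $\xf$-entry is unchanged because no step of this construction touches $\xf$ (indeed, this blow-up is only relevant if data reaches $Y_{-}$, which the forward-directed setting rules out, and in any case the calculations in Proposition~\ref{prop:LCF-solution-at-newface} near $\xf$ are parallel). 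A final check, using Proposition~\ref{prop:LCF-solution-at-newface} together with the vanishing of the inputs at $\lcfp, \lcfm$, confirms that $K_{\lcfp}=K_{\lcfm}=\emptyset$, completing the proposition.
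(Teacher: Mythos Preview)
Your proposal is correct and follows essentially the same route as the paper: solve at $\ffp\cap\lfp$ via the indicial computation of Proposition~\ref{prop:FF-solve-at-leftface}, invoke Vasy's Corollary~3.6 on the front-face fibers to remove the remaining smooth error on $\ffp$, iterate in the defining function for $\ffp$ and extend along $\lfp$, then repeat for $P(\lambda)^{t}_{R}$ near $\ffm\cup\rfm$ and glue. One small slip in your bookkeeping paragraph: $u_{+}$ is supported near $\ffp\cup\lfp$, so applying $P(\lambda)^{t}_{R}$ to it produces a residual at $\lfp$ (giving $K_{\lfp,R}=F_{\lfp}$), not at $\rfm$; the $K_{\rfm,L}=F_{\rfm}$ term comes symmetrically from applying $P(\lambda)$ to $u_{-}$.
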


\section{The full parametrix}
\label{sec:full-parametrix}

We now take the various pieces of the parametrix constructed in
Sections \ref{sec:diagonal-singularity}, \ref{sec:transport-equation},
\ref{sec:light-cone-face}, and \ref{sec:front-face} to construct a
parametrix for the fundamental solution of $P$.

Putting together the results of
Propositions~\ref{prop:PL-construction}, \ref{prop:TE-conormal-error},
\ref{prop:LCF-solution-at-newface}, and
\ref{prop:ff-param-at-ff-and-lf}, we have proved the following
theorem:

\begin{thm}
  \label{thm:thm1}
  Suppose that $X$ is an asymptotically de Sitter space, satisfying
  assumptions (A1), (A2), and (A3). We may find a left parametrix $K$
  such that $P(\lambda) K = I + R_{1}$ and $KP(\lambda) = I + R_{2}$,
  where the Schwartz kernels of $R_{1}$ and $R_{2}$ are smooth on the
  interior of $\dblspace$ and are polyhomogeneous with index families
  $\E_{L}$ and $\E_{R}$ on $\dblspace$. We may write $K = K_{1} +
  K_{2} + K_{3}$, where $K_{1}$ is supported in a neighborhood of the
  diagonal, $K_{2}$ is supported in a small neighborhood of the light
  cone $\lightcone$ away from the diagonal, and all three pieces are
  supported on the interior of the light cone. Moreover,
  \begin{align}
    K_{1} &\in r_{\frontface} ^{-n/2} \PLzero[-3/2], \\
    K_{2} &\in
    r_{\frontface}^{-n/2}\phgalt{\F}\CDf{-3/2}{\dblspacet}{\lightcone},
    \notag\\ 
    K_{3} &\in r_{\frontface}^{-n/2} \phgalt{\F}(\dblspace).\notag
  \end{align}
  The index families $\E_{L}$, $\E_{R}$, and $\F$ are given by
  \begin{align*}
    &F_{\lcfp} = \{ (j+\frac{1}{2}, l) : l \leq j, j \in \naturals\},
    & & F_{\lcfm} = F_{\lcfp} - n, \\ 
    &F_{\lfp} = E_{\lfp,R} = \{ (s_{\pm}(\lambda) + m , 0 ) : m\in
    \naturals \},& & E_{\rfm, L}= F_{\rfm} = F_{\lfp} - n, & & \\ 
    &E_{\lcfm,L} = F_{\lcfm} - 1, & & E_{\lcfp,R} = F_{\lcfp} - 1,
    \notag\\ 
    &F_{\xf} = E_{\xf, L} = E_{\xf, R} = \{ (-n+j, l) : l \leq j\}, &
    & \notag \\ 
    &E_{\ffm,L} = E_{\ffp,R}= \left\{ \left(-\frac{n}{2} + m,
        0\right) : m \in \naturals\right\}, & & \\ 
    &E_{\lcfp, L} = E_{\lcfm, R} = E_{\lfp, L} = E_{\rfm, R} =
    E_{\ffp,L} = E_{\ffm, R} =  \emptyset. & & \notag 
  \end{align*}
  If $s_{+}(\lambda) - s_{-}(\lambda) \in \integers$, then we must
  modify $F_{\lfp}$ and $F_{\rfp}$ (and the index sets depending on
  them) as described earlier. 
\end{thm}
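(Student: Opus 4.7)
The plan is to assemble the parametrix in four layers, tracking support in the forward light cone and index sets at every boundary hypersurface, then to verify that the combined kernel is simultaneously a left and right parametrix. First I would invoke Proposition~\ref{prop:PL-construction} applied to $f = \kappa_I \in \xt^{-n/2}\LDf[0]{0}{\dblzero}{\Lambda_0}{\hd(\dblzero)}$ to produce $K_1 \in \xt^{-n/2}\PLzero[-3/2]$, supported in a small neighborhood of the diagonal, with $P(\lambda)K_1 - \kappa_I$ a conormal distribution on $\Lambda_1$. By the remark at the end of Section~\ref{sec:diagonal-singularity} (expansion of the symbol on $\Lambda_1$ in decreasing powers of $(\eta + i0)$), we may arrange for $K_1$ and this error to be supported in the closed forward light cone; and by the lemma at the end of Section~\ref{sec:diagonal-singularity}, $K_1$ also serves as a right parametrix modulo a conormal error on $\Lambda_1$ of the same form.

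Next, I would cut off the $\Lambda_1$-error outside a neighborhood of $\lightcone$ (using assumption (A3) to identify $\Lambda_1$ with the conormal bundle of an embedded submanifold) and apply Proposition~\ref{prop:TE-conormal-error} to produce $K_2 \in r_{\frontface}^{-n/2}\phgalt{\F}\CDf{-3/2}{\dblspacet}{\lightcone}$, supported inside the light cone away from the diagonal, such that $P(\lambda)(K_1 + K_2)$ differs from $\kappa_I$ by a polyhomogeneous error supported near $\lcfp$ with index family $\E_L$, and symmetrically on the right near $\lcfm$ with index family $\E_R$. The index sets $F_{\lcfp}$, $F_{\lcfm}$, $E_{\lcfp,R}$, and $E_{\lcfm,L}$ recorded in the theorem are exactly those produced by the transport argument, including the $-1$ shift from the factors of $\frac{1}{x}$ appearing in Lemma~\ref{lem:full-action-on-conormal} and the $-n$ shift from working with sections of the ordinary half-density bundle rather than the $0$-half-density bundle.

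The third step is to apply Proposition~\ref{prop:LCF-solution-at-newface} to solve away the left polyhomogeneous error at $\lcfp$ and the right error at $\lcfm$, producing a contribution to $K_3 \in r_{\frontface}^{-n/2}\phgalt{\F}(\dblspace)$. This is where the indicial roots $s_{\pm}(\lambda)$ first appear: the transport equations in the $W = s/\rho$ variable near $\lfp$ have indicial equation with roots $s_{\pm}(\lambda)$, so the solutions are polyhomogeneous at $\lfp$ with index set $F_{\lfp}$ (or the modified $\tilde{F}_{\lfp}$ in the integer coincidence case). I would then apply Proposition~\ref{prop:ff-param-at-ff-and-lf} to finish solving the remaining errors at the front faces $\ffp$, $\ffm$ and side faces $\lfp$, $\rfm$, invoking Vasy's Corollary 3.6 on the front faces themselves; this contributes the remaining piece of $K_3$ and produces the index set $E_{\ffm,L} = E_{\ffp,R}$ of the form $\{(-n/2 + m, 0)\}$ from the indicial behavior of the normal operator.

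The main obstacle is bookkeeping: one must verify that at every stage the supports stay inside the forward light cone (so the constructions near $\ffp$ and $\ffm$ can be glued without interaction), that the extended unions arising when combining the contributions $K_1 + K_2 + K_3$ do not enlarge the claimed index families, and that each subsequent correction does not reintroduce singularities that the previous step solved away. The key structural points making this work are that $P(\lambda)$ commutes with multiplication by $\xt$ (so the weight $r_{\frontface}^{-n/2}$ is preserved), that the diagonal lies in the interior of $\dblspace$ and is disjoint from the side, light cone, and scattering faces, and that $\lcfp$ and $\lcfm$ are separated after blowing up $\lightcone \cap \lfp \cap \rfm$, so the left- and right-correction procedures do not interfere. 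Once these supports and compatibilities are checked, the index families in the theorem read off directly from Propositions~\ref{prop:PL-construction}, \ref{prop:TE-conormal-error}, \ref{prop:LCF-solution-at-newface}, and \ref{prop:ff-param-at-ff-and-lf}, with the integer-coincidence modification of $F_{\lfp}$ and $F_{\rfp}$ inherited from Lemma~\ref{lem:LCF-transport-eqn-solution-is-phg}.
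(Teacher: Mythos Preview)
Your proposal is correct and follows exactly the approach of the paper: the theorem is stated as the culmination of the four-step construction, and the paper's own proof consists of the single sentence ``Putting together the results of Propositions~\ref{prop:PL-construction}, \ref{prop:TE-conormal-error}, \ref{prop:LCF-solution-at-newface}, and \ref{prop:ff-param-at-ff-and-lf}, we have proved the following theorem.'' Your write-up is a faithful and more detailed elaboration of precisely this assembly, correctly identifying the roles of each proposition, the support tracking, and the source of each index set.
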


\begin{note}
  As observed earlier, this theorem holds without the assumption (A3)
  as long as we are willing to multiply our distribution by a cutoff
  function supported in a neighborhood of the front face. In this
  case, the extra blow-up to obtain the scattering face $\xf$ is
  unnecessary. 
\end{note}

Because our remainder terms lose one order of decay at the light cone
faces, we lose an order of decay there when we pass to the exact
fundamental solution. The following is a precise statement of the main
result (Theorem~\ref{thm:main-thm}). 
\begin{thm}
  \label{thm:fund-soln}
  The exact forward fundamental solution $E_{+}$ is in this class of
  distributions, but with index sets $F_{\lcfp}' = F_{\lcfp}- 1$ and
  $F_{\lcfm}' = F_{\lcfm} - 1$. In other words, we may write $E_{+} =
  K_{1}+K_{2}+K_{3}$ with 
  \begin{align}
    K_{1} &\in r_{\frontface} ^{-n/2} \PLzero[-3/2], \\
    K_{2} &\in r_{\frontface}^{-n/2}\phgalt{\F}\CDf{-3/2}{\dblspacet}{\lightcone}, \notag\\
    K_{3} &\in r_{\frontface}^{-n/2} \phgalt{\F}(\dblspace).\notag
  \end{align}
  Here the index family $\F$ is given by
  \begin{align*}
    &F_{\lcfp} = \{ (j-\frac{1}{2}, l) : l \leq j, j \in \naturals\},
    & & F_{\lcfm} = F_{\lcfp} - n, \\ 
    &F_{\lfp} = \{ (s_{\pm}(\lambda) + m , 0 ) : m\in \naturals \},& &
    F_{\rfm} = F_{\lfp} - n, & & \\ 
    &F_{\xf} =  \{ (-n-1+j, l) : l \leq j\}, & & \notag  
  \end{align*}
  with modifications to $F_{\lfp}$ and $F_{\rfm}$ when $s_{+}(\lambda)
  - s_{-}(\lambda)$ is an integer. 
\end{thm}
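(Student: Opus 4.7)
The plan is to bootstrap from the parametrix of Theorem~\ref{thm:thm1} using the existence and uniqueness of the forward fundamental solution provided by Vasy's Theorem~\ref{thm:Vasy-thm-5.4}. Let $R_1 = P(\lambda)K - I$. By Theorem~\ref{thm:thm1}, $R_1$ is smooth on the interior of $\dblspace$ and polyhomogeneous with the explicit index family $\E_L$; in particular it vanishes to infinite order at $\lfp$, $\ffp$, and $\lcfp$ (having been solved away in Sections~\ref{sec:light-cone-face} and \ref{sec:front-face}) and also at $\lfm$ and $\rfp$ by the forward-cone support arranged throughout the construction. For each interior source point $q$, the column $R_1(\cdot, q)$ is therefore a smooth, forward-directed function on $X$ with polyhomogeneous expansions at $Y_\pm$. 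Vasy's theorem produces the forward-directed distribution $w = E_+ R_1$ column-by-column, and the identity
\begin{equation*}
P(\lambda)(K - w) = (I + R_1) - R_1 = I,
\end{equation*}
combined with the forward-cone support of $K - w$ and the uniqueness of the forward fundamental solution, yields $E_+ = K - E_+ R_1$. The theorem thus reduces to showing that $E_+ R_1$ is polyhomogeneous on $\dblspace$ with the index family obtained from $\F$ by shifting the light-cone and scattering-face index sets down by $1$.

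To establish this, I would repeat the parametrix construction of Sections~\ref{sec:diagonal-singularity}--\ref{sec:front-face}, but with $R_1$ replacing the identity kernel as the right-hand side. Since $R_1$ is smooth on the interior of $\dblspace$, no paired Lagrangian piece arises in the diagonal singularity step. The transport analysis of Section~\ref{sec:transport-equation} then runs with a smooth polyhomogeneous inhomogeneous term in place of a conormal error; Lemma~\ref{lem:TE-sols-are-phg} and the Borel-summation argument of Lemma~\ref{lem:TE-asymp-sum} produce a distribution conormal to $\lightcone$ whose index family at $\lcfp$ and $\lcfm$ is obtained by extended union with $F_1$. The light cone face analysis of Section~\ref{sec:light-cone-face} contributes the indicial-root expansion $F_2$ at $\lfp$ and $\rfm$, since the indicial roots $s_\pm(\lambda)$ are intrinsic to $P(\lambda)$ and independent of the inhomogeneous data; the front face analysis of Section~\ref{sec:front-face} handles the remaining smooth error using Vasy's Corollary~3.6. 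Carrying out this construction simultaneously on both the left and the right produces a candidate polyhomogeneous distribution on $\dblspace$.

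The main obstacle is pinning down the precise one-order loss in the index sets at $\lcfp$, $\lcfm$, and $\xf$ that distinguishes $E_+$ from $K$. Although $R_1$ vanishes to infinite order at $\lcfp$, the fact that $\lightcone$ is characteristic for $P(\lambda)$ forces the solution of $P(\lambda) u = R_1$ to acquire a conormal singularity along $\lightcone$ anyway; running the transport hierarchy of Lemma~\ref{lem:TE-apply-op-to-ansatz-result} carefully for this inhomogeneous problem yields leading symbols on $\Lambda_1$ whose variation-of-parameters solutions are one order less decaying in the defining function of $\lcfp$ than the corresponding leading symbols of $K_2$ were. The analogous shift at $\lcfm$ follows from the symmetric right-action construction, and the corresponding shift at $\xf$ follows from the corner analysis at $\lightcone \cap \lfp \cap \rfm$; the integer-coincidence case for $s_+(\lambda) - s_-(\lambda)$ introduces the same logarithmic modifications at $\lfp$ and $\rfm$ as in Theorem~\ref{thm:thm1}. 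Finally, any residual smooth forward-directed solution of $P(\lambda) u = 0$ on $X$ vanishes by the uniqueness part of Vasy's Theorem~\ref{thm:Vasy-thm-5.4}, identifying $E_+$ with the polyhomogeneous distribution constructed above.
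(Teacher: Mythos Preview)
Your approach diverges from the paper's in an essential way, and the divergence creates a genuine gap.

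You derive the one-sided identity $E_{+} = K - E_{+}R_{1}$ and then propose to analyze $E_{+}R_{1}$ by re-running the entire parametrix construction with $R_{1}$ as the right-hand side. The paper instead exploits that Theorem~\ref{thm:thm1} supplies a \emph{two-sided} parametrix: $P(\lambda)K = I + R_{1}$ and $K P(\lambda) = I + R_{2}$. Applying $E_{+}$ on the left to the first identity and $K$ on the right to $P(\lambda)E_{+} = I$ and combining gives
\[
E_{+} \;=\; K - K R_{1} + R_{2} E_{+} R_{1}.
\]
Now $K R_{1}$ is a composition of two explicitly known kernels, so its index family can be read off directly. The term $R_{2} E_{+} R_{1}$ sandwiches the unknown $E_{+}$ between two smoothing remainders with complementary vanishing (by Theorem~\ref{thm:thm1}, $R_{1}$ vanishes to infinite order at $\ffp$, $\lfp$, $\lcfp$ while $R_{2}$ vanishes to infinite order at $\ffm$, $\rfm$, $\lcfm$); only the temperedness of $E_{+}$ from Theorem~\ref{thm:Vasy-thm-5.4} is needed to conclude this term is polyhomogeneous with the stated indices. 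No iteration is required, and the one-order loss at $\lcfp$, $\lcfm$, $\xf$ is inherited from the index sets $E_{\lcfm,L}$, $E_{\lcfp,R}$ of the remainders, which by Proposition~\ref{prop:TE-conormal-error} are already shifted by $-1$ relative to $F_{\lcfm}$, $F_{\lcfp}$.

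Your third paragraph contains an actual error: you assert that because $\lightcone$ is characteristic, $E_{+}R_{1}$ acquires a conormal singularity along $\lightcone$ on the interior. But $R_{1}$ is smooth on the interior of $\dblspace$, so each column $R_{1}(\cdot,q)$ is $C^{\infty}$ on $X^{\circ}$, and propagation of singularities forces $E_{+}R_{1}$ to be $C^{\infty}$ on the interior as well. The conormal-to-$\lightcone$ part of $E_{+}$ comes entirely from $K_{2}$ in the parametrix, not from the correction. Consequently your proposed mechanism for the index shift at $\lcfp$ (variation-of-parameters solutions of the transport hierarchy being one order less decaying) cannot be right: the inhomogeneous term vanishes to infinite order at $\lcfp$, so the transport argument there produces nothing new. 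More broadly, re-running the parametrix construction on $R_{1}$ yields yet another parametrix with yet another remainder, and you have not explained why this process terminates or improves; the sandwich identity is precisely the device that breaks this circularity.
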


\begin{note}
  If we instead adopt the convention that the $0$-densities are flat,
  that
  \begin{equation*}
    \int _{X}f(\xt, \yt) \delta(x-\xt)\delta(y-\yt) \dg (x,y) = f(x,y),
  \end{equation*}
  and we write $K$ as a section of the pullback bundle of
  ${}^{0}\hd(X\times X)$, then the index sets change somewhat. Indeed, if
  $K= \hat{K}\tilde{\nu}$, where $\tilde{\nu}$ is a nonvanishing
  section of the pullback of ${}^{0}\hd (X\times X)$, then the same
  theorem holds, but with index sets
  \begin{align*}
    F_{\lcfp} &= F_{\lcfm}= \{ (j-1,l) : l\leq j, j\in \naturals\}, \\
    F_{\lfp} &= F_{\rfm} = \{ (s_{\pm}(\lambda)+m,0): m\in\naturals\}, \\
    F_{\xf} &= \{ (j-1, l): l\leq j, j\in \naturals\}, \\
    F_{\ffp} &= F_{\ffm} = \{ (j, 0) : j\in \naturals\}.
  \end{align*}
\end{note}

\begin{proof}[Proof of theorem]
  $E_{+}$ is the forward fundamental solution, so if $f$ is a
  compactly supported smooth function on $X$, then $P(\lambda)E_{+}f =
  f$ and $E_{+}P(\lambda)f = f$. Moreover, continuity allows us to
  extend this to any forward-oriented distribution. In particular, if
  $f$ is any smooth function on the interior of $X$ vanishing to all
  orders at $Y_{-}$ that is also a tempered distribution on $X$, then
  \begin{align*}
    P(\lambda)E_{+}f &= f \\
    E_{+}P(\lambda) f &= f.
  \end{align*}
  
  Let $K$ be the parametrix for $E_{+}$ constructed in
  Theorem~\ref{thm:thm1}. If $f$ is a smooth function on $X$,
  vanishing to all orders at $Y$, then $Kf$ vanishes to all orders at
  $Y_{-}$ because $K$ is identically zero in a neighborhood of $\lfm$
  and the lift of $f$ vanishes to all orders at $\lcfm$, $\ffm$,
  $\xf$, and $\rfm$. A similar argument applies to $R_{1}f$ and
  $R_{2}f$. We may then write
  \begin{align*}
    (Kf) &= E_{+}P(\lambda)Kf = E_{+}f + E_{+}R_{1}f \\
    (Kf) &= KP(\lambda)E_{+}f = E_{+}f + R_{2}E_{+}f .
  \end{align*}
  In particular,
  \begin{equation*}
    E_{+} = K - KR_{1} + R_{2}E_{+}R_{1}.
  \end{equation*}
  We then observe that the error terms $KR_{1}$ and $R_{2}E_{+}R_{1}$
  have the desired properties, finishing the proof. 
\end{proof}

\section{Modifications for de {S}itter space}
\label{sec:exact-de-sitter}

De Sitter space does not satisfy assumption (A3) because the
projection of the forward flowout of the light cone from a point at
past infinity intersects itself at future infinity (though not on the
interior of the spacetime). We briefly discuss the modifications
to our construction needed for de Sitter space. The most important
modification is to solve the transport equations on $\dblspace$ rather
than $\dblspacet$. (Indeed, we could have done this from the outset,
but chose not to because solutions of the transport equations are
easier to understand on $\dblspacet$.)

Observe that the construction detailed above works without difficulty
away from the corner $\lfp \cap \rfm$. As mentioned earlier,
$\lightcone$ intersects itself in this corner. This intersection is
given by $\{ (0,y,0,-y): y\in Y\}$. Near this intersection, we may
write $\rho = x + \xt - |y+\yt|$, plus terms vanishing to higher order
at the boundary. Note that the function $\rho$ is no longer smooth at
this intersection because $|y+\yt| = 0$ there.

We resolve this singularity by blowing up the submanifold
$\lightcone\cap \lfp\cap\rfm$ as in
Definition~\ref{defin:ABU-full-double-space}. Although this
submanifold had codimension $3$ when $X$ satisfied (A3), it has
codimension $n+1$ here. Indeed, this blow-up is almost the same as the
one in Section~\ref{sec:0-geometry} that defined $\dblzero$. The
function $\rho$ now lifts to be smooth on this new space $[\dblzero ,
\lightcone\cap\lfp\cap\rfm]$. Indeed, after the blow-up we may write
\begin{equation}
  \label{eq:rho-on-ds}
  \rho = (1+s) - |z| + O(\xt)  
\end{equation}
near $\xf \cap\lfp$. This is now smooth near $\rho = 0$, and so we may
blow up $\lightcone\cap \lfp$ and $\lightcone \cap \rfm$ to obtain
$\dblspace$.

We must also modify the manner in which we solve the transport
equations. The behavior at $\lcfp$ and $\lcfm$ may be obtained in the
same way as in Section~\ref{sec:transport-equation}, but the behavior
at $\xf$ requires a slightly different approach. In
Section~\ref{sec:another-blow-up}, we showed that the symbol of the
conormal distribution was polyhomogeneous at $\xf$ by constructing it
on $\dblspacet$ and lifting it to $\dblspace$. Because $\rho$ is not
smooth on $\dblzero$ when $X$ is the de Sitter space, we cannot solve
the transport equations on the intermediate double space $\dblspacet$
up to the corner and instead we must solve the transport equation
along $\lcfm$ and $\lcfp$.

Solving the transport equation on these faces requires using the
semi-explicit form of $\rho$. The terms where the operator lands
entirely on $\rho$ in equation~(\ref{eq:TE-op-to-ansatz-result}) can
no longer be ignored. Using the form (\ref{eq:rho-on-ds}) for $\rho$,
we observe that these now contribute a constant term to the equation.
Because $\Box \rho = s - (n-1)s + s^{2}(n-1) + O(\xt)$ and $\pd[s]\rho
= 1 + O(\xt)$,

A computation in the same spirit as those in
Section~\ref{sec:transport-equation} shows that the symbol of $K_{2}$
is also polyhomogeneous at $\xf$ with index set 
\begin{equation*}
  F_{\xf} = \left\{ \left( -n + m, 0\right): m\in \naturals\right\}.
\end{equation*}
Note that this is the same index set we found before.

The rest of the construction proceeds without change.

The modifications to the construction for de Sitter space correspond
to allowing the location of the pole $p$ in $P(\lambda) u =
\delta_{p}$ tend to past infinity. If we require that the point $p$ is
uniformly bounded away from past infinity, no modification is
necessary.

In \cite{polarski:1989} and \cite{yagdjian-galstian:2008}, the authors
do not consider sending this pole to past infinity. Our unmodified
construction recovers a slightly weaker version of the results of
these authors when the pole is in the interior but the modified
construction also describes the behavior of the fundamental solution
when the pole is at past infinity.

\section{Polyhomogeneity}
\label{sec:mapping-props-phg}

The aim of this section is to prove Theorem~\ref{thm:mapping-phg},
which was stated in Section~\ref{sec:introduction}.

We begin by considering the maps $\beta_{L}$ and $\beta _{R}$, where
$\beta _{L,R}$ are given by composing the blow-down maps with
projections onto each factor, as in the diagram here: 
\begin{equation*}
  \xymatrix{
    & \dblspace \ar[d]\ar[dddl]_{\beta_{L}}\ar[dddr]^{\beta _{R}}& \\
    & \dblzero \ar[d]& \\
    & X \times X \ar[dl] \ar[dr] & \\
    X & & X
  }
\end{equation*}

We require the following four lemmata.

\begin{lem}
  \label{lem:phg-maps-are-b-fibrations}
  The maps $\beta _{L}$ and $\beta _{L}|_{\lightcone^{int}}$ are b-fibrations.
\end{lem}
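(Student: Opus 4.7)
The plan is to verify the three defining properties of a b-fibration directly for $\beta_L$, and then to observe that they are inherited by the restriction to $\overline{\lightcone^{int}}$. I would decompose $\beta_L$ as the composition $\dblspace \to \dblzero \to X \times X \to X$ of the blow-downs with the left projection. Each of these factors is individually a b-fibration, but rather than try to pass this property through the composition (which can fail in general), I would check the three conditions concretely using the projective coordinates developed in Section~\ref{sec:0-geometry}.

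Since $X$ has only the two disjoint boundary hypersurfaces $Y_+, Y_-$ and no corners, b-normality reduces to the statement that each boundary hypersurface of $\dblspace$ maps into at most one of $Y_\pm$, with the remaining faces lying in the null set of the exponent matrix. Tracking the construction of $\dblspace$ through the sequence of blow-ups: the faces $\lfp$, $\ffp$, $\lcfp$, and $\xf$ all lie in $\{x = 0\}$ (the last because the blow-up locus $\lightcone \cap \lfp \cap \rfm$ lies inside $\lfp$), so they project to $Y_+$; the faces $\lfm$, $\ffm$ project to $Y_-$; and $\rfp$, $\rfm$, $\lcfm$ have the left factor in the interior of $X$, so they lie in $\null(e_{\beta_L})$. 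For the b-map property, one verifies in projective coordinates near each boundary hypersurface that $\beta_L^{*} x_\pm$ pulls back as a product of boundary defining functions. Near $\ffp$, coordinates $(s = x/\xt, z, \xt, \yt)$ give $\beta_L^{*} x = s \xt$, yielding exponent $1$ at both $\lfp$ and $\ffp$; analogous charts near $\xf$ and $\lcfp$ show that $\beta_L^{*} x$ factors as $\rho_{\lfp}\rho_{\ffp}\rho_{\lcfp}\rho_{\xf}$ up to a smooth nonvanishing factor, with all exponents equal to $1$, and similarly $\beta_L^{*} x_{-} = \rho_{\lfm}\rho_{\ffm}$. For b-submersion, the map is an ordinary submersion on the interior; at $\ffp$ it takes the form $(s, z, \xt, \yt) \mapsto (s\xt, \yt + \xt z)$, so the b-vector fields $\xt \pd[\xt]$ and $\pd[\yt]$ push forward to $x \pd[x]$ and $\pd[y]$ on $X$, giving surjectivity of the b-differential. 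Analogous computations in the corresponding projective charts at each other boundary hypersurface are routine.

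For the restriction $\beta_L|_{\lightcone^{int}}$, the closure of $\lightcone^{int}$ in $\dblspace$ is a manifold with corners whose boundary hypersurfaces consist of the lifted light cone $\lightcone$ together with the transverse intersections of $\overline{\lightcone^{int}}$ with the boundary hypersurfaces of $\dblspace$ (the transversality being guaranteed by Theorem~\ref{thm:main-thm}). The lifted light cone $\lightcone$ itself projects into $X$ along smooth nondegenerate curves that meet the boundary only at the already blown-up loci, so $\lightcone$ lies in the null set of the exponent matrix of $\beta_L|_{\overline{\lightcone^{int}}}$; each other boundary face of $\overline{\lightcone^{int}}$ inherits its exponent data from the corresponding face of $\dblspace$. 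The three b-fibration properties then follow from the verifications above by restriction.

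The main technical point to watch throughout is the interaction with the face $\xf$, introduced precisely to separate the lifts of $\lfp$ and $\rfm$ that would otherwise meet along $\lightcone \cap \lfp \cap \rfm$. I would need to confirm that $x$, but not $x_-$, degenerates at $\xf$, so that $\xf$ contributes only to the $Y_+$ column of the exponent matrix and b-normality is preserved; this is exactly where assumption (A3) plays a role, since it ensures that $\lightcone$, and hence $\lightcone \cap \lfp \cap \rfm$, is an embedded submanifold on which the relevant blow-ups can be performed consistently.
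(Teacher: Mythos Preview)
Your approach is correct but takes a different route from the paper. The paper's proof invokes a general principle: if $F: M \to N$ is a b-fibration and $Z \subset \partial M$ is a p-submanifold, then the composite $F \circ \beta: [M; Z] \to N$ of $F$ with the blow-down map is again a b-fibration. Since the left projection $X \times X \to X$ is a fibration (hence a b-fibration) and each successive blow-up in the construction of $\dblspace$ is along a p-submanifold of the boundary, the result follows by iteration. By contrast, you verify the b-map, b-normal, and b-submersion conditions directly in projective coordinates, tracking the exponent matrix face by face. Your route is more elementary and self-contained; the paper's is shorter but leans on an unproved (though standard) general lemma about blow-ups. One caveat the paper makes explicit that you should also record: $\lightcone^{int}$ is taken as an open manifold away from the cone edge at the diagonal, since the closure of the light cone fails to be smooth there; this is harmless for the application because $K_{2}$ is supported away from that region.
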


\begin{lem}
  \label{lem:phg-transverse-fibers}
  The fibers of $\beta_{L}$ and $\beta _{L}: \lightcone^{int} \to X$
  are transverse to $\lightcone$. 
\end{lem}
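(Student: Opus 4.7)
The plan is to reduce transversality to a local linear-algebra statement and then invoke (A3) to verify it. Fix a point $q \in \lightcone$ lying in a fiber $\beta_L^{-1}(p)$. Since $\dim \dblspace = 2n$, $\dim \beta_L^{-1}(p) = n$, and $\lightcone$ has codimension one, transversality at $q$ is equivalent to the surjectivity of $d\beta_L|_{T_q\lightcone} : T_q\lightcone \to T_p X$, or equivalently to the statement that $\dim(T_q\lightcone \cap T_q\beta_L^{-1}(p)) = n-1$.

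Working in the interior of $\dblspace$, where no blow-ups have occurred and $\dblspace$ agrees with $X\times X$, I would choose a local defining function $F$ for $\lightcone$, so that $T_q\lightcone = \ker dF$. The intersection with the vertical tangent space is then $\ker d_R F$, where $d_R F$ denotes the differential in the right factor alone. By the defining description of the light cone, the slice $\{p_R : F(p_L, p_R) = 0\}$ for fixed $p_L$ is exactly the projected flowout of the forward null bicharacteristics from $p_L$ into the right factor. Assumption (A3), together with the extension remarked on in Section~\ref{sec:asympt-de-sitt} (where it is observed that the flowout from any point is locally embedded and that the global embeddedness in (A3) extends to interior base points of $X$), guarantees that this slice is a smooth embedded hypersurface. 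Hence $d_R F \neq 0$ at $q$, so $\ker d_R F$ has dimension exactly $n-1$, and $d\beta_L|_{T_q\lightcone}$ is surjective by rank-nullity. The same dimension count shows the fibers of $\beta_L|_{\lightcone^{int}}$, which are just open subsets of the fibers of $\beta_L$, meet the boundary $\lightcone$ transversely.

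For points near the boundary faces and blow-up fronts of $\dblspace$, I would argue that transversality is preserved under each blow-up appearing in the constructions of Section~\ref{sec:another-blow-up}. Indeed, each submanifold that is blown up (the boundary of the diagonal, and the loci $\lightcone \cap \lfp$, $\lightcone \cap \rfm$, and $\lightcone \cap \lfp\cap \rfm$) is either disjoint from generic fibers or is contained in $\lightcone$ itself and meets the fibers transversely by construction (this is essentially the content of Theorem~\ref{thm:main-thm} to the extent that $\lightcone$ meets boundary hypersurfaces transversely). Since blowing up a submanifold that is transverse to a pair of transversal submanifolds keeps them transversal in the blown-up space, the interior transversality lifts to all of $\dblspace$.

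The main potential obstacle is making sure that the smoothness of $C(p_L) = \{p_R : F(p_L, p_R) = 0\}$ truly holds at all interior base points, not only at $p_L \in Y_-$ as explicitly stated in (A3). This is addressed by the remarks in Section~\ref{sec:asympt-de-sitt} about the local embeddedness of the flowout together with the global no-caustic condition: the flowout from any interior point projects to an embedded hypersurface in some neighborhood, and under (A3) this extends globally inside $X^\circ$. Once that smoothness is in hand, the transversality is a direct linear-algebra calculation as above, and an identical argument applies to $\beta_R$ by symmetry.
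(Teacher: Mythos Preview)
Your approach is essentially the same as the paper's: both identify the intersection of $\lightcone$ with a fiber of $\beta_L$ over $(x_0,y_0)$ as the projected light cone with vertex $(x_0,y_0)$ in the right factor (using that $\Lambda_1$ is also the $H_R$-flowout), and then invoke (A3) to conclude this is an embedded hypersurface, which forces the correct dimension of the intersection and hence transversality. Your write-up is simply more explicit about the linear algebra (rank--nullity, the defining function $F$, the condition $d_RF\neq 0$) and about persistence under the blow-ups, whereas the paper compresses all of this into two sentences.
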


\begin{lem}
  \label{lem:phg-vfs-lift}
  For $V$ a b-vector field on $X$, there is a $b$-vector field
  $\tilde{V}$ on $\dblspace$ such that $(\beta _{L})_{*}\tilde{V} = V$
  and $\tilde{V}$ is tangent to $\lightcone$. 
\end{lem}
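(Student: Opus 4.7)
The plan is to construct $\tilde V$ in two steps: first produce an arbitrary b-vector field $\tilde V_0$ on $\dblspace$ lifting $V$, and then correct $\tilde V_0$ by a vertical b-vector field to achieve tangency to $\lightcone$.

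For the first step, I would invoke Lemma~\ref{lem:phg-maps-are-b-fibrations}: $\beta_L$ is a b-fibration and in particular a b-submersion, so the b-differential $(\beta_L)_*\colon {}^{b}T_p\dblspace \to {}^{b}T_{\beta_L(p)}X$ is pointwise surjective. In each local coordinate chart on $\dblspace$, one solves the pointwise surjective equation using a local b-frame; patching via a partition of unity on $\dblspace$ produces a global b-vector field $\tilde V_0$ with $(\beta_L)_*\tilde V_0 = V$. Note that the naive pullback through the left projection $\pi_L\colon X\times X\to X$ is generally not smooth as a b-vector field on $\dblspace$ (for instance, a left $\partial_{y_i}$ lifts to $\tilde x^{-1}\partial_z$ in projective coordinates near the front face), so the lift must incorporate correction terms acting on the right factor such as $\partial_{y_i}^{L}+\partial_{y_i}^{R}\leadsto \partial_{\tilde y_i}$; the b-submersion property guarantees that such corrections always exist.

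For the second step, work in a tubular neighborhood $\mathcal{U}$ of $\lightcone$ in $\dblspace$ on which $\lightcone$ is cut out by a smooth defining function $\rho$; this exists because $\lightcone$ is an embedded two-sided hypersurface (the forward light cone carries a natural interior/exterior orientation) meeting every boundary hypersurface of $\dblspace$ transversely. I seek a vertical b-vector field $W$ on $\mathcal{U}$, i.e., one with $(\beta_L)_*W = 0$, satisfying $W\rho = \tilde V_0\rho$ on $\lightcone$; then $\tilde V := \tilde V_0 - W$ still projects to $V$ (since $W$ is vertical) and has $\tilde V\rho|_{\lightcone} = 0$, hence is tangent to $\lightcone$. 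By Lemma~\ref{lem:phg-transverse-fibers} the fibers of $\beta_L$ are transverse to $\lightcone$, so at each $p\in \lightcone$ the vertical b-tangent space $\ker(\beta_L)_*\subset{}^{b}T_p\dblspace$ contains a vector $U_p$ with $U_p\rho\neq 0$. Extending to local vertical b-vector fields $U_\alpha$ on a cover $\{\mathcal{U}_\alpha\}$ of $\lightcone$ with $U_\alpha\rho$ nonvanishing on $\lightcone\cap \mathcal{U}_\alpha$, setting $W_\alpha = (\tilde V_0\rho / U_\alpha\rho)U_\alpha$, and summing with a partition of unity $\{\chi_\alpha\}$ subordinate to $\{\mathcal{U}_\alpha\}$ gives $W = \sum_\alpha \chi_\alpha W_\alpha$, which is vertical and satisfies $W\rho = \sum_\alpha \chi_\alpha \tilde V_0\rho = \tilde V_0\rho$ on $\lightcone$. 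Multiplication of $W$ by a cutoff equal to $1$ near $\lightcone$ and supported in $\mathcal{U}$ extends it to all of $\dblspace$.

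The main obstacle is the first step: ensuring that a smooth b-vector field lift exists globally across the sequence of blow-ups defining $\dblspace$, in particular across the scattering face $\xf$ and the light cone faces $\lcfp,\lcfm$ where the geometry is most intricate. This is handled entirely by Lemma~\ref{lem:phg-maps-are-b-fibrations}, since b-fibrations admit b-vector field lifts. The existence of local vertical b-vector fields $U_\alpha$ with $U_\alpha\rho\neq 0$ used in the correction step also relies on the b-submersion structure of $\beta_L$ and the transversality statement of Lemma~\ref{lem:phg-transverse-fibers}; otherwise the steps are routine partition-of-unity arguments.
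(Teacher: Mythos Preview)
Your proposal is correct and follows essentially the same approach as the paper: both arguments use the b-fibration property of $\beta_L$ to produce a lift and then invoke the transversality of Lemma~\ref{lem:phg-transverse-fibers} to correct by a vertical (fiber-tangent) vector field so as to achieve tangency to $\lightcone$. The paper phrases this more tersely by choosing, near $\lightcone$, a local frame consisting of a fiber-tangent vector $\partial_\nu$ together with vectors $V_j$ tangent to $\lightcone$ (which exists precisely by transversality), and then observing that one may arrange the $\partial_\nu$-component of the lift to vanish along $\lightcone$; your version with the defining function $\rho$ and the explicit correction $W = (\tilde V_0\rho / U_\alpha\rho)U_\alpha$ is simply a more detailed implementation of the same idea.
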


\begin{lem}
  \label{lem:phg-model-form-of-conormal-bit}
  Suppose that $M$ and $N$ are manifolds with corners, $F:M\to N$ is a
  b-fibration, $H$ is a boundary hypersurface of $M$, and $F(H) = N$
  Suppose also that $K\in \CDf{0}{M}{H}$ is polyhomogeneous at the
  other boundary hypersurfaces of $M$ (satisfying the hypotheses of
  Lemma~\ref{lem:phg-b-fibration-pushforward}). Then $F_{*}K$ is a
  polyhomogeneous distribution on $N$. 
\end{lem}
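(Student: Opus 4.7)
The plan is to decompose $K$ into a piece supported near $H$ (which captures the conormal singularity) and a piece supported away from $H$ (where $K$ is already polyhomogeneous on $M$), and treat each piece separately. Fix a cutoff $\chi \in C^\infty(M)$ equal to $1$ in a small neighborhood of $H$ and supported in a slightly larger neighborhood. Then $(1-\chi)K$ is polyhomogeneous on $M$ with index sets inherited from those of $K$, and the positivity hypothesis of Lemma~\ref{lem:phg-b-fibration-pushforward} passes through as well; that lemma then yields the polyhomogeneity of $F_*((1-\chi)K)$ on $N$.

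For the near-$H$ piece $\chi K$, I would work in a local chart around a point of $H$. Because $F$ is a b-fibration and $F(H) = N$, the b-submersion property, combined with $H$ having codimension one, forces the fibers of $F$ to be transverse to $H$. Consequently there are adapted coordinates $(u, v_1, v')$ on $M$ and $u$ on $N$ in which $F$ is projection on $u$, $H = \{v_1 = 0\}$, and each boundary hypersurface of $M$ meeting the chart is cut out either by a component of $u$ or by a component of $v'$. In such coordinates the conormal distribution has the model form
\begin{equation*}
  \chi K = \int_{\reals} e^{i v_1 \eta} a(u, v_1, v', \eta) \, d\eta,
\end{equation*}
where $a$ is a classical symbol in $\eta$, polyhomogeneous in the boundary components of $(u, v')$, and compactly supported in $v_1$ by the choice of $\chi$.

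Pushing forward (swapping the order of integration, after applying partial integration in $\eta$ using $v_1 e^{i v_1 \eta} = -i \partial_\eta e^{i v_1 \eta}$ to lower the symbol order as needed) gives
\begin{equation*}
  F_*(\chi K)(u) = \int \! \int \widehat{a}_{v_1}(u, -\eta, v', \eta) \, dv' \, d\eta,
\end{equation*}
where $\widehat{a}_{v_1}$ denotes Fourier transform in $v_1$. Since $a$ is smooth and compactly supported in $v_1$, the function $\widehat{a}_{v_1}(u, \tau, v', \eta)$ is Schwartz in $\tau$ uniformly as a symbol in $\eta$; hence $\widehat{a}_{v_1}(u, -\eta, v', \eta)$ decays rapidly as $|\eta| \to \infty$, the $d\eta$ integral converges absolutely, and the result is smooth in $u$ on the interior of $N$. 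The polyhomogeneous expansion of $a$ in the boundary components of $u$ passes term by term through the integration to yield a polyhomogeneous expansion of $F_*(\chi K)$ at $\partial N$.

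The main obstacle will be the globalization of this local analysis and the bookkeeping of index families. Near corners of $M$ where $H$ meets several boundary hypersurfaces, the coordinate chart must simultaneously respect all of these faces as well as the b-structure of $F$; the standard b-calculus machinery of \cite{melrose:1992} and \cite{melrose:1993} ensures that a partition-of-unity argument respects the polyhomogeneous structure. A cleaner alternative is to first resolve the conormal singularity by blowing up $H$ together with its intersections with the boundary hypersurfaces of $M$, obtaining a manifold with corners $\widetilde{M}$ on which $K$ lifts to a polyhomogeneous distribution (with a new front face carrying the index set encoding the conormal behavior). The composition $\widetilde{M} \to M \xrightarrow{F} N$ remains a b-fibration, so Lemma~\ref{lem:phg-b-fibration-pushforward} applies directly on $\widetilde{M}$ and yields the desired polyhomogeneity of $F_* K$ on $N$.
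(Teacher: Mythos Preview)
Your primary route (cutoff plus a local oscillatory-integral computation) is a valid alternative to the paper's argument, which instead approximates $K$ by polyhomogeneous $K^\epsilon$ supported away from $H$, applies Lemma~\ref{lem:phg-b-fibration-pushforward} to each $K^\epsilon$, and then shows the pushforwards are Cauchy by lifting b-vector fields on $N$ to b-vector fields on $M$ \emph{tangent to $H$} (Lemma~\ref{lem:phg-vfs-lift}); tangency is exactly what keeps $\tilde V(K^{\epsilon_1}-K^{\epsilon_2})$ in the conormal class so that it goes to zero. Your approach is more concrete, the paper's more functorial; both ultimately rest on the fibers of $F$ meeting $H$ transversally. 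One caution: your claim that transversality is forced by $F(H)=N$ plus b-submersion is correct when $H$ is literally a boundary hypersurface (the b-differential annihilates the b-normal $x\partial_x$, so $F_*|_{T_pH}$ must already be onto), but in the application $H=\lightcone$ is an \emph{interior} hypersurface, where this deduction fails in general---the paper proves transversality separately as Lemma~\ref{lem:phg-transverse-fibers}, and you should invoke it rather than assert it.

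Your ``cleaner alternative'' has a genuine gap. Blowing up a codimension-one $H$ is the identity map on $M$: no front face is created, so the conormal singularity is not resolved and $K$ does not become polyhomogeneous on any $\widetilde M$ obtained this way. Cutting $M$ along $H$ and treating each half as a manifold with corners does make the restrictions of a classical conormal distribution polyhomogeneous, but it discards the distributional mass supported on $H$ (for instance $K=\delta_H$ restricts to $0$ on either side while $F_*\delta_H$ is certainly nonzero), so the pushforward computed that way is wrong. Stick with your direct Fourier argument.
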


\begin{proof}[Proof of Lemma~\ref{lem:phg-maps-are-b-fibrations}]
  Here $\lightcone^{int}$ is taken as an open manifold away from the
  cone edge near the diagonal. $K_{2}$ is supported away from the cone
  point, so we may restrict our attention to this region.

  That $\beta _{L}$ and $\beta _{L}|_{\lightcone^{int}}$ are
  b-fibrations (defined in Section~\ref{sec:polyhomogeneity}) follows
  from a more general statement: if $F : M \to N$ is a b-fibration,
  and $Z\subset \pd M$ is a p-submanifold, then $F \circ \beta : [M ;
  Z] \to N$ is a b-fibration. In particular, the blow-down map is a
  composition of b-fibrations. 
\end{proof}

\begin{proof}[Proof of Lemma~\ref{lem:phg-transverse-fibers}]
  Because $\Lambda_{1}$ is the flowout Lagrangian in the right factor,
  its intersection with the fibers $\xt s=x_{0}$, $\yt + \xt z =
  y_{0}$ is the flowout of the light cone with cone point at
  $(x_{0},y_{0})$. In particular, this is an embedded submanifold of
  $X$, and so the intersection is transverse. 
\end{proof}

\begin{proof}[Proof of Lemma~\ref{lem:phg-vfs-lift}]
  This is really a consequence of
  Lemma~\ref{lem:phg-transverse-fibers}. Indeed, we may choose a local
  basis of vector fields given by $\pd[\nu]$ and $V_{j}$, where
  $\pd[\nu]$ is tangent to the fibers of $\beta _{L}$ and $V_{j}$ are
  tangent to $\lightcone$. Then $\left( \beta
    _{L}\right)_{*}\pd[\nu]=0$ because $\pd[\nu]$ is tangent to the
  fibers of $\beta _{L}$, and so we may choose a lift of the vector
  fields so that the $\pd[\nu]$ component vanishes at $\lightcone$.
\end{proof}

\begin{proof}[Proof of Lemma~\ref{lem:phg-model-form-of-conormal-bit}]
  We choose $K^{\epsilon}$ polyhomogeneous on $M$, supported away from
  $H$ such that $K^{\epsilon}$ are uniformly bounded in
  $\CDf{0}{M}{H}$ and converge to $K$ in $\CDf{\delta}{M}{H}$ for any
  $\delta > 0$. The pushforward theorem of Melrose
  (Lemma~\ref{lem:phg-b-fibration-pushforward} of this paper) tells us that
  $F_{*}K^{\epsilon}$ are polyhomogeneous with fixed index set.

  We claim now that the $F_{*}K^{\epsilon}$'s are Cauchy and so converge to
  $F_{*}K$. This guarantees that $F_{*}K$ is polyhomogeneous. The key
  observation here is that if $\tilde{V}$ is a lift of $V$, then
  \begin{equation*}
    V\left( F_{*}\left(K^{\epsilon_{1}}- K^{\epsilon _{2}}\right)
    \right)  = F_{*}\left( \tilde{V} \left( K^{\epsilon _{1}} -
        K^{\epsilon _{2}} \right) \right) . 
  \end{equation*}

  The expression $\tilde{V} \left(K^{\epsilon _{1}}- K_{\epsilon
      _{2}}\right)$ tends to $0$ because $K^{\epsilon}\to K$ as
  distributions conormal to $H$. 
\end{proof}

\begin{proof}[Proof of Theorem~\ref{thm:mapping-phg}]
  A simple wavefront set argument gives us that the $K_{2}f$ piece is
  smooth.

  Thinking functorially, we write 
  \begin{equation*}
    K(v\gamma) = (\beta _{L})_{*} \left( K \cdot \beta
      _{R}^{*}(v\gamma)\right) = u\gamma . 
  \end{equation*}
  In particular, if we write $K = \tilde{K}\nu$, then
  \begin{equation*}
    u\gamma ^{2} = u \left| \dx\dy\right| = (\beta_{L})_{*}\left(
      \tilde{K}
      (\beta_{R}^{*}v)r_{\frontface}^{n/2}(r_{\lcfp}r_{\lcfm})^{1/2}r_{\xf}\nu^{2}\right). 
  \end{equation*}

  We now use the decomposition $K = K_{1} + K_{2}+K_{3}$, where
  $K_{i}$ are as in Theorem~\ref{thm:fund-soln}. The lemma above shows
  that $\beta _{L}$ is a b-fibration, and so we may use the
  pushforward theorem of Melrose in
  Lemma~\ref{lem:phg-b-fibration-pushforward} to treat the
  contribution from $K_{3}$. In particular, we note that if $v$ has
  index set $E$ at $Y_{+}$ and vanishes to all orders at $Y_{-}$, then
  $\beta _{R}^{*}v$ is polyhomogeneous on $\dblspace$, smooth at
  $\lcfp$ and $\leftface$, with index set $E$ at $\ffp$, and vanishing
  to all orders at $\ffm$, $\lcfm$, and $\xf$ because it is
  forward-directed. Index sets add when functions are multiplied, and
  so we know that
  $r_{\frontface}^{n/2}(r_{\lcfp}r_{\lcfm})^{1/2}r_{\xf}\tilde{K}_{3}\cdot
  \beta _{R}^{*}f$ has index family $\G$ given by
  \begin{align*}
    &G_{\ffp} = E, & &G_{\ffm} = \emptyset\\
    &G_{\lfp} = \{ (s_{\pm}(\lambda) + m, 0) : m \in \naturals\}, & &G_{\lfm} = \emptyset\\
    &G_{\rfm} = \emptyset, & &G_{\rfp} = \emptyset\\
    &G_{\lcfp} = \{ (j,l): l \leq j, j \in \naturals\}, & &G_{\lcfm} = \emptyset\\
    &G_{\xf} = \emptyset.
  \end{align*}
  $G_{\lfp}$ must be modified when $s_{+}(\lambda) - s_{-}(\lambda)$
  is an integer. We may now use Lemma
  \ref{lem:phg-b-fibration-pushforward} to conclude that $K_{3}f$ is
  polyhomogeneous on $X$ with index set
  \begin{equation*}
    E \,\extcup\, G_{\lfp} \, \extcup \, G_{\lcfp}.
  \end{equation*}

  To handle the contribution from $K_{2}$, we use the lemmas above.
  Indeed,
  Lemmas~\ref{lem:phg-maps-are-b-fibrations},~\ref{lem:phg-vfs-lift},
  and \ref{lem:phg-model-form-of-conormal-bit} show that $K_{2}f$ is
  polyhomogeneous with index set $G_{\lcfp}\, \extcup\, E$.

  This leaves only the contribution from $K_{1}$. This is just a
  consequence of the local theory of paired Lagrangian distributions.
  Indeed, the work of Joshi in \cite{joshi:1998} implies that the
  pushforward of $K_{1}\beta _{R}^{*}f$ exists as a smooth function.
  The uniformity of $K_{1}$ down to the front face $\frontface$ then
  tells us that $K_{1}f$ is polyhomogeneous with index set $E$.
\end{proof}

\section{An $L^{p}$ estimate}
\label{sec:lp-estimate}

As another application of Theorem~\ref{thm:thm1}, we consider the
behavior of the $L^{p}$ norms of a family of smooth compactly
supported functions with support tending towards $Y_{+}$.

Suppose first that $R\in \Psi ^{-\infty}_{0}(X)$ is a smoothing
$0$-pseudodifferential operator in the small calculus of
\cite{mazzeo:1988} or \cite{mazzeo-melrose:1987}, supported near
$\ffp$. In other words, the Schwartz kernel of $R$ is a smooth
function on $\dblzero$, supported away from $\leftface$ and
$\rightface$ and near $\ffp$. Concretely, let $\phi$ is a smooth,
compactly supported function on $\reals^{n}_{+}$, supported near
$(1,0)$, and with $\int_{0}^{\infty}\int _{\reals^{n-1}} \phi
\frac{\ds\dz}{s^{n}}= 1$. Let $\chi$ be a smooth function on $X$ that
is identically zero near $Y_{-}$. If $R(s,z,\xt,\yt) = \phi (s,z)\chi
(\xt,\yt)$, then $R\in \Psi^{-\infty}_{0}(X)$ is such an operator.

Define now a family of compactly supported functions $f$ given by
\begin{equation*}
  f_{(\xt, \yt)} (x,y) = R\left(\frac{x}{\xt}, \frac{y-\yt}{\xt}, \xt, \yt\right) = Rv,
\end{equation*}
where $v = \delta (x' - \xt) \delta (y' - \yt)$. Each $f_{(\xt, \yt)}$
is a smooth function on $X$ supported in a compact neighborhood of
$(\xt, \yt)$ with unit $L^{1}(X;\differential{g})$-norm.

Because $f_{(\xt,\yt)}$ is given by applying $R$ to a $\delta$
function, pointwise bounds for $E_{+}f_{(\xt, \yt)}$ are equivalent to
pointwise bounds on the Schwartz kernel of $E_{+}R$.

The following lemma is useful for obtaining pointwise bounds.
\begin{lem}
  \label{lem:ptwise-estimates-dblspace}
  Suppose that $K = \hat{K}\nu$, where $K$ is a polyhomogeneous
  function on $\dblspace$ with index family $\F$ and supported near
  $\ffp$. Suppose that $\F$ satisfies
  \begin{align*}
    F_{\ffp} &= \{ (s_{\frontface}+j, p) : j \in \naturals, p \leq p_{j}\}, \\
    F_{\lcfp} &= \{ (s_{\newface}+j, p) : j \in \naturals, p \leq p_{j}\}, \\
    F_{\lfp} &= \{ (s_{\leftface}+j, p) : j \in \naturals, p \leq p_{j}\}, \\
    F_{\rfp} &= \{ (s_{\rightface}+j,p) : j\in \naturals, p\leq p_{j}\},
  \end{align*}
  and $p_{0} = 0$ for each index set (i.e., no log terms appear in the
  top order part of the expansion). Suppose further that
  \begin{align*}
    s_{\frontface} \geq \frac{n}{2}, \quad   s_{\newface} \geq \frac{1}{2}, \quad s_{\leftface} \geq 0, \quad s_{\rightface} \geq 0.
  \end{align*}
  Then $K$ (considered as an operator) satisfies
  \begin{equation*}
    \norm[L^{\infty}(X)]{Ku} \leq C\norm[L^{1}(X; \dg)]{u}.
  \end{equation*}

  If $s_{\frontface} \geq -\frac{n}{2}$ instead, then $K$ is bounded
  $L^{1}(X; \differential{g}) \to L^{\infty}(X)$. 
\end{lem}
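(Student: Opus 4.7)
The plan is to reduce the $L^{1}\to L^{\infty}$ claim to a pointwise bound on the Schwartz kernel of $K$ viewed as an integral kernel with respect to $\dg$, and then to check that the four vanishing hypotheses are exactly what is needed to cancel the Jacobian factor picked up when lifting $\upsilon=|\dg_{L}\dg_{R}|^{1/2}$ from $X\times X$ to $\dblspace$.

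The density bookkeeping from Sections~\ref{sec:0-geometry} and \ref{sec:another-blow-up} shows that $\upsilon$ pulls back to a smooth nonvanishing multiple of $r_{\ffp}^{n/2}r_{\lcfp}^{1/2}r_{\lcfm}^{1/2}r_{\xf}\,\nu$ on $\dblspace$, so writing $K=\hat{K}\nu$, the integral kernel $k$ of $u\mapsto Ku$ against $\dg$ is a smooth nonvanishing multiple of $\hat{K}/(r_{\ffp}^{n/2}r_{\lcfp}^{1/2}r_{\lcfm}^{1/2}r_{\xf})$. Because $K$ is supported near $\ffp$, the factors $r_{\lcfm}$ and $r_{\xf}$ remain bounded away from $0$ on the support. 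The four conditions $s_{\frontface}\geq n/2$, $s_{\newface}\geq 1/2$, $s_{\leftface}\geq 0$, $s_{\rightface}\geq 0$, together with the assumption $p_{0}=0$ at each face (no leading log), then yield $|\hat{K}|\lesssim r_{\ffp}^{n/2}$, $r_{\lcfp}^{1/2}$, $1$, and $1$ respectively, so that $k\in L^{\infty}(X\times X)$; the product structure of polyhomogeneous expansions at corners ensures the bound extends uniformly through the corners of the support. The first claim is then immediate from
\begin{equation*}
|Ku(p)|\leq \int|k(p,q)|\,|u(q)|\,\dg(q)\leq \|k\|_{L^{\infty}(X\times X)}\,\|u\|_{L^{1}(X;\dg)}.
\end{equation*}

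For the second claim ($s_{\frontface}\geq -n/2$) the pointwise bound breaks down and $k$ may blow up like $r_{\ffp}^{-n}$ as one approaches $\ffp$. The main obstacle is to absorb this singularity into the $\dg$-integration against $u$. In projective coordinates $(s,z,\tilde x,\tilde y)$ near $\ffp$, the support of $k(p,\cdot)$ at fixed $p=(x,y)$ is confined to $s=x/\tilde x$ in a compact set, so $\tilde x\sim x$ there; for $p$ in the interior of $X$ the singularity therefore never appears, and the delicate regime is $p\to Y_{+}$. I would handle this regime by performing the integration directly in projective coordinates, exploiting that the Jacobian $\tilde x^{n}$ from $dx\,dy=\tilde x^{n}\,ds\,dz\,d\tilde x\,d\tilde y$ combines with the $\tilde x^{-n}$ inside $\dg$ to leave a compactly supported, uniformly bounded fiber integral in $(s,z)$, independent of $p$. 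Carrying out this reduction and tracking the density factors carefully constitutes the main technical content of the weaker statement.
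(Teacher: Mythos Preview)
Your argument for the first statement is exactly the paper's: write the kernel against $\dg$ as $\hat K\,r_{\ffp}^{-n/2}r_{\lcfp}^{-1/2}$ (the other density factors being harmless on the support) and observe that the index-set hypotheses make this bounded, hence pull back from a bounded function on $X\times X$.

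For the second statement you are working much harder than necessary. The paper's proof is a single observation: $\dg = \tilde x^{\,n}\,\differential{g}$ in the right variable, so the kernel against $\differential{g}$ is $\tilde x^{\,n}$ times the kernel $k$ you already analyzed. Near $\ffp$ one has $\tilde x \sim r_{\ffp}\,r_{\rfp}$; the factor $r_{\rfp}^{\,n}$ only helps (you assumed $s_{\rightface}\ge 0$), while the factor $r_{\ffp}^{\,n}$ shifts the threshold on $s_{\frontface}$ from $n/2$ down to $-n/2$. There is no need to pass to projective coordinates, split into regimes $p\to Y_{+}$ versus $p$ interior, or perform any fiber integration: the second claim reduces immediately to the first. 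Your sketch would ultimately arrive at the same cancellation (the ``$\tilde x^{-n}$ inside $\differential{g}$'' you mention is precisely this factor), but the detour through Jacobians of the blow-down and compactly supported fiber integrals obscures what is really a one-line density identity.
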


\begin{proof}
  The half-density $\left| \dg_{L}\dg_{R}\right|^{1/2}$ lifts to a
  nonvanishing smooth multiple of $r_{\lcfp}^{1/2}r_{\ffp}^{n/2}\nu$
  near $\ffp$, so we may write
  \begin{equation*}
    \hat{K}\nu = \hat{K}r_{\lcfp}^{-1/2}r_{\ffp}^{-n/2}
    \beta^{*}\left(\left| \dg_{L}\dg_{R}\right|^{1/2}\right), 
  \end{equation*}
  where $\beta^{*}$ is the pullback by the blow-down map $\dblspace
  \to X\times X$. The assumptions above mean that
  $\hat{K}r_{\lcfp}^{-1/2}r_{\ffp}^{-n/2}$ is a bounded function on
  $\dblspace$ and so is the pullback of a bounded function on $X\times
  X$.

  The second statement of the lemma follows from the observation that
  $\dg = x^{n}\differential{g}$. 
\end{proof}

Let $K_{1},K_{2}$, and $K_{3}$ be the decomposition of $E_{+}$ given
in Theorem~\ref{thm:thm1}. The Schwartz kernel of $K_{j}R$ is given by
\begin{equation}
  \label{eq:composition}
  \int K_{j}\left(\frac{x}{s'\xt}, \frac{y - (\yt + \xt z')}{s'\xt},
    s'\xt, \yt + \xt z'\right) R(s',z',\xt,\yt)\ds'\dz' 
\end{equation}
where $s' = \frac{x'}{\xt}$ and $z' = \frac{y'-\yt}{\xt}$.  This
corresponds to writing the composition of operators $A$ and $B$ on
$X\times X$ as 
\begin{equation*}
  \kappa_{AB}(x,y,\xt,\yt) = \int
  \kappa_{A}(x,y,x',y')\kappa_{B}(x',y',\xt,\yt)\dx'\dy', 
\end{equation*}
where $\kappa_{A}$ here denotes the Schwartz kernel of $A$.  If
$\kappa_{A}$ and $\kappa_{B}$ are instead functions of $s,z,\xt,\yt$,
where $s = \frac{x}{\xt}$ and $z = \frac{y-\yt}{\xt}$, this becomes 
\begin{align*}
  &\int \kappa_{A}\left(\frac{x}{x'}, \frac{y-y'}{x'}, x',
    y'\right)\kappa_{B} \left(\frac{x'}{\xt},
    \frac{y'-\yt}{\xt},\xt,\yt\right)\dx'\dy' \\ 
  &\quad = \int \kappa _{A}\left(\frac{x}{s'\xt}, \frac{y - (\yt + \xt
      z')}{s'\xt}, s'\xt, \yt + \xt
    z'\right)\kappa_{B}\left(s',z',\xt,\yt\right)\ds'\dz', 
\end{align*}
which yields equation~(\ref{eq:composition}).

Note that $K_{1},K_{2}$, and $K_{3}$ all vanish identically near
$\rfp$. Because the family $f_{(\xt,\yt)}$ is supported away from
$Y_{-}$, we may assume that $K_{i}$ are supported away from the
``minus'' faces $\ffm,\lcfm,\rfm$, and $\xf$.

Consider first $K_{1}$, the piece corresponding to the paired
Lagrangian singularity of $E_{+}$. By Theorem~\ref{thm:thm1}, the
Schwartz kernel of $K_{1}$ is given by $(x')^{-n/2}\tilde{K}_{1}$,
where $\tilde{K}_{1}$ is a paired Lagrangian distribution on
$\dblzero$ supported away from $\leftface$ and $\rightface$.

Because the fibers of integration in equation~(\ref{eq:composition})
are transverse to the diagonal and to the light cone, it follows that
$K_{1}R = \xt ^{-n/2}u_{1}$, where $u_{1}$ is a smooth function on
$\dblzero$ supported away from $\leftface$ and $\rightface$. In
particular, $u_{1}$ is bounded, and
Lemma~\ref{lem:ptwise-estimates-dblspace} implies that
\begin{equation*}
  \norm[L^{\infty}]{K_{1}f_{(\xt, \yt)}} \leq C.
\end{equation*}

Because $R$ is defined only near $\ffp$, we are free to use the
improved parametrix of Theorem~\ref{thm:thm1} rather than distribution
of Theorem~\ref{thm:fund-soln}. In particular, the symbol of the
conormal distribution $K_{2}$ is bounded by $r_{\lcfp}^{1/2}$ and so
$K_{2}R$ satisfies the conditions of the second part of
Lemma~\ref{lem:ptwise-estimates-dblspace} and so
\begin{equation*}
  \norm[L^{\infty}(X)]{K_{2}f_{(\xt,\yt)}}\leq C.
\end{equation*}

Consider finally the polyhomogeneous term $K_{3}$. The Schwartz kernel
of $r_{\lfp}^{-s_{-}(\lambda)}K_{3}R$ satisfies the conditions of the
lemma, and so 
\begin{equation*}
  \norm[x^{s_{-}(\lambda)}L^{\infty}(X)]{K_{3}f_{(\xt,\yt)}}\leq C.
\end{equation*}

Now let $l = \max \left( 0, -\Re s_{-}(\lambda)\right)$. Putting the
estimates for $K_{1},K_{2}$, and $K_{3}$ together yields
\begin{equation*}
  \norm[x^{-l}L^{\infty}(X)]{E_{+}f_{(\xt,\yt)}} \leq C.
\end{equation*}

An $L^{2}$ estimate for this family follows from
Theorem~\ref{thm:Vasy-thm-5.4} (from \cite{vasy:2007}). By reversing
the roles of $Y_{-}$ and $Y_{+}$ in this theorem, we conclude that for
any forward-directed $f\in x^{-r}L^{2}(X)$ and $r > \max\left(
  \frac{1}{2}, l(\lambda)\right)$, there is a unique $u\in
x^{-r}H^{1}_{0}(X)$ such that $P(\lambda)u = f$, and 
\begin{equation*}
  \norm[x^{-r}L^{2}(X;\differential{g})]{u} \leq
  \norm[x^{-r}H^{1}_{0}(X)]{u}\leq C
  \norm[x^{-r}L^{2}(X;\differential{g})]{f}. 
\end{equation*}
Here $H^{1}_{0}$ is the $0$-Sobolev space of order one, i.e., it
measures regularity with respect to the $x\pd[x]$ and $x\pd[y]$ vector
fields.

In order to apply this estimate to the family $f_{(\xt,\yt)}$, it is
important to understand how the $L^{2}$-norms of the functions vary.
Indeed, a simple calculation shows that 
\begin{equation*}
  \norm[x^{-r}L^{2}(X)]{f_{(\xt,\yt)}}^{2} = \int
  _{X} |f_{(\xt,\yt)}(x,y)|^{2} \frac{\dx\dy}{x^{n+r}} = \int |\phi
  (s,z)|^{2}\frac{\ds\dz}{s^{n+r}\xt^{r}} = \xt^{-r}C_{r}^{2}, 
\end{equation*}
where $C_{r}$ depends on $r$, but not on $\xt$ or $\yt$. In
particular, $\norm[x^{-r}L^{2}(X;\differential{g})]{f} =
\xt^{-r/2}C_{r}$.

We may now prove Theorem~\ref{thm:Lp-estimates}.
\begin{proof}
  Interpolating between the $L^{\infty}$ and $L^{2}$ estimates
  provides an $L^{p}$ estimate for $p\in (2,\infty)$. Indeed, if $r >
  \max \left(\frac{1}{2}, \Re
    \sqrt{\frac{(n-1)^{2}}{4}+\lambda}\right)$, $l = \max \left(0,
    -\frac{n-1}{2} + \Re\sqrt{\frac{(n-1)^{2}}{4}+\lambda}\right)$,
  and $\frac{1}{p} = \frac{\theta}{2}$, $\theta \in [0,1]$, then
  \begin{equation}
    \label{eq:Lp-est-big}
    \norm[x^{-r\theta -
      l(1-\theta)}L^{2/\theta}(X;\differential{g})]{E_{+}f_{(\xt,\yt)}}\leq
    C\xt ^{-r\theta}, 
  \end{equation}
  which finishes the proof.
\end{proof}

\begin{note}
  The proof of Theorem~\ref{thm:Lp-estimates} uses the inclusion
  $H^{1}_{0} \subset L^{2}$. In particular, we ignore one derivative
  of $E_{+}f$ and so we could modify equation~(\ref{eq:Lp-est-big}) to
  include a fractional derivative. 
\end{note}

\section{Acknowledgements}
\label{sec:acknowledgements}

The author is very grateful to Rafe Mazzeo and Andr{\'a}s Vasy for
countless helpful conversations and to MSRI for their generous
hospitality while writing a draft of this paper. This research was
partly supported by NSF grants DMS-0805529 and DMS-0801226.

\bibliographystyle{plain}
\bibliography{math}

\end{document}